\newwrite\graphics
\let\oincludegraphics\includegraphics
\renewcommand{\includegraphics}[2][]{
  \immediate\write\graphics{#2}
  \oincludegraphics[#1]{#2}}
\newtheorem{theorem}{Theorem}[section]
\newtheorem{proposition}[theorem]{Proposition}
\newtheorem{lemma}[theorem]{Lemma}
\numberwithin{equation}{section}
\newcommand{\N}{\mathbb{N}}
\newcommand{\R}{\mathbb{R}}
\newcommand{\sign}{\operatorname{sign}}
\newcommand{\norm}[1]{\lVert #1 \rVert}
\newcommand{\eps}{\varepsilon}
\newcommand{\nex}{\textup{ne}}
\newcommand{\ex}{\textup{ex}}
\newcommand{\aFF}{a_{\mathrm{\sharp}}}
\newcommand{\aDD}{a_{\mathrm{b}}}
\newcommand{\crm}{}
\newcommand{\nc}{\normalcolor}
\begin{document}
\title{On standing waves of 1D nonlinear Schr\"odinger equation with triple power nonlinearity}
\author{Theo Morrison\thanks{University of British Columbia, Vancouver BC Canada, morrisontgs@gmail.com} \and
Tai-Peng Tsai\thanks{University of British Columbia, Vancouver BC Canada,  ttsai@math.ubc.ca}
}

\date{}
\maketitle

\begin{abstract}
For the one dimensional nonlinear Schr\"odinger equation with triple power nonlinearity and general exponents, we study analytically and numerically the existence and stability of standing waves. Special attention is paid to the curves of non-existence and curves of stability change on the parameter planes.

\emph{Keywords}: nonlinear Schr\"odinger equation, triple power nonlinearity, standing waves, existence, stability, orbital stability
\end{abstract}

\tableofcontents

\section{Introduction}\label{S1}

Consider the one dimensional nonlinear Schr\"odinger equation with triple power nonlinearity
\begin{equation} \label{eq:nls}
i\partial_t u + \partial_x^2 u + f(u) =0,\quad f(u) = a_1|u|^{p-1}u+a_2|u|^{q-1}u+a_3|u|^{r-1}u
\end{equation}
where $u:\R_t \times \R_x \to \mathbb C$, \crm $a_1,a_3 \in\mathbb R\setminus\{0\}$, $a_2 \in\mathbb R$ and  \nc$1<p<q<r<\infty$.
Our primary goal is to study the existence and stability properties of standing waves of \eqref{eq:nls} with the coefficients being the parameters.
This paper is a continuation of our previous study \cite{LiTsZw21} in which we focused on the special case $(p,q,r)=(2,3,4)$.

Nonlinear Schr\"odinger equations appear in many areas of physics such as nonlinear optics (see e.g.~\cite{Ag07}) or Bose-Einstein condensation. 
Mathematically, they form one of the primary examples of dispersive partial differential equations.
The Cauchy problem for \eqref{eq:nls} with general $f(u)$ is well known (see \cite{Ca03} and the references therein) to be well-posed in the energy space $H^1(\mathbb R)$: for any $u_0\in H^1(\mathbb R)$, there exists a unique maximal solution $u\in C((-T_*,T^*),H^1(\R)) \cap C^1((-T_*,T^*),H^{-1}(\R))$ of \eqref{eq:nls} such that $u(t=0)=u_0$. Moreover, the energy $E$ and the mass $Q$, defined by
\[
E(u)=\frac12\norm{u_x}_{L^2}^2-\int_\R F(|u|)\,dx, 
\quad Q(u)=\norm{u}_{L^2}^2,
\] 
where $F(t)=\int_0^{t} f(s)\,ds$,
are conserved along the flow and the blow-up alternative holds (i.e.~if $T^*<\infty$ (resp. $T_*<\infty$), then $\lim_{t\to T^* \text{ (resp }-T_*{\text{)}}}\norm{u(t)}_{H^1}=\infty$).

A \emph{standing
wave} is a solution of \eqref{eq:nls} of the form $u(t,x) = e^{i \omega t}\phi(x) $ for some
$\omega \in \R$ and a {nonzero} \emph{profile} $\phi \in C^2(\R){\cap H^1(\R)}$, which then satisfies
\begin{equation} \label{eq:ode}
\phi'' =\omega \phi -f(\phi).
\end{equation}
We only consider real-valued $\phi$ in this paper.
Standing waves and more general 
solitary waves are the building blocks for the nonlinear dynamics of \eqref{eq:nls}, as it is expected that, generically, a solution of \eqref{eq:nls} will decompose into a dispersive linear part and a combination of nonlinear structures as solitary waves. This vague statement is usually referred to as the \emph{Soliton Resolution Conjecture}.
Therefore, understanding the dynamical properties of standing waves, in particular their stability, is a key step in the analysis of the dynamics of \eqref{eq:nls}.  Several stability concepts are available for standing waves. The most commonly used is \emph{orbital stability}, which is defined as follows. A standing wave $e^{i\omega t}\phi(x)$ solution of \eqref{eq:nls} is said to be \emph{orbitally stable} if for any $\eps>0$, there exists $\delta>0$ such that if $u_0\in H^1(\R)$ verifies
\[
\norm{u_0-\phi}_{H^1}<\delta,
\]
then the associated solution $u$ of \eqref{eq:nls} exists globally and verifies 
\[
\sup_{t\in\R}\inf_{y\in\R,\theta\in\R}\norm{u(t)-e^{i\theta}\phi(\cdot-y)}_{H^1}<\eps.
\]
In the rest of this paper, when we talk about stability/instability, we always mean \emph{orbital} stability/instability.

  The groundwork for orbital stability studies was laid down by Berestycki and Cazenave \cite{BeCa81}, Cazenave and Lions \cite{CaLi82} and Weinstein \cite{We83,We85}. Two approaches lead to stability or instability results: the variational approach of \cite{BeCa81,CaLi82}, which exploits global variational characterizations combined with conservation laws or the virial identity, and the spectral approach of \cite{We83,We85}, which exploits spectral and coercivity properties of linearized operators to construct a suitable Lyapunov functional.  Later on, Grillakis, Shatah and Strauss \cite{GrShSt87,GrShSt90} developed an abstract theory which, under certain assumptions, boils down the stability study of a branch of standing waves $\omega\to \phi_{\omega}$  to the study of the sign of the quantity
\(
\frac{\partial}{\partial \omega} Q(\phi_{\omega}).
\)
Note that the theory of Grillakis, Shatah and Strauss has known recently a considerable revamping in the works of De Bi\`evre, Genoud and Rota-Nodari \cite{DeGeRo15,DeRo19}.

With the above mentioned techniques, the orbital stability of positive standing waves has been completely determined in the single power case $f(u) = a_1|u|^{p-1}u$ in any dimension $d\geq1$ in \cite{BeCa81,CaLi82, We83,We85}. In this case, positive standing waves exist if and only if $a_1>0$, $\omega>0$, 
{and $1<p<p_{\text{max}}$, where $p_{\text{max}}=\infty$ for $d=1,2$ and $p_{\text{max}}=1+\frac{4}{d-2}$ for $d\ge 3$. When they exist,}
they are stable if $1<p<1+\frac4d$ (i.e.~$1<p<5$ in dimension $d=1$), and unstable if $1+\frac4d\leq p<p_{\text{max}}$ (i.e.~$5\leq p<\infty$ in dimension $d=1$). The scaling property of the single power nonlinearity plays an important role in the proof and ensures in particular that stability and instability are independent of the value of the frequency $\omega$. 
It turns out that there is no scaling invariance for multiple power nonlinearities, which makes the stability study more delicate. As a matter of fact, only very {limited} partial results are available so far in higher dimensions. In dimension $1$, the situation is a bit more favorable, as one might exploit the ODE structure of the profile equation \eqref{eq:ode} in the analysis.

Preliminary investigations for the stability of standing waves in dimension $1$ were conducted by Iliev and Kirchev \cite{IlKi93} in the case of a generic nonlinearity. In particular, a formula for the slope condition was obtained in \cite{IlKi93}; {see Theorem \ref{thm:stab}.} 
The stability of standing waves for double power nonlinearity in dimension $1$ was initiated by Ohta \cite{Oh95} and continued by Maeda \cite{Ma08} and 
Fukaya and Hayashi \cite{FuHa21}.
The remaining cases were completely classified in Kfoury, Le Coz and Tsai \cite{MR4480890}.
Hayashi \cite[Theorem 1.3]{Ha21}  is similar to \cite{MR4480890} but it does not include the cases $1<p<9/5$. See \cite[Theorem 1]{MR4480890} for a detailed description.

For the triple power case as in \eqref{eq:nls}, very little is known. In our previous study \cite{LiTsZw21}, we focused on the special case $(p,q,r)=(2,3,4)$. Many results of \cite{LiTsZw21} will be shown to persist for general {triple power} $f(u)$, but we will also see new phenomena.
When $a_1<0$ and $a_3>0$, we say that the nonlinearity is \emph{defocusing-focusing}, or DF, with analogous definitions for other possible signs combinations, with a total of 4 cases FF, FD, DF and DD. Note that there is no DD case for double power nonlinearity {(corresponding to $a_2=0$)} as there is no standing wave when all coefficients are negative.
For a solution $u$ of the  NLS (\ref{eq:nls}), we may consider $u(x,t) = \kappa v(\lambda^{-1}x,\lambda^{-2}t)$ for some $\kappa,\lambda>0$. Then $v$ satisfies
\[
i\partial_t v +\partial_x^2 v + b|v|^{p-1}v+c|v|^{q-1}v+d|v|^{r-1}v=0,
\]
with
\[
b=a_1\kappa^{p-1}\lambda^2,\quad c = a_2\kappa^{q-1}\lambda^2\quad d=a_3\kappa^{r-1}\lambda^2.
\]
Choosing $\kappa = |a_1/a_3|^{1/(r-p)}$ and $\lambda=(|a_3|/|a_1|^\frac{r-1}{p-1})^\frac{p-1}{2(r-p)}$ gives $|b|=|d|=1$. Since $u$ and $v$ have the same qualitative properties, we may assume that $|a_1|=|a_3|=1$ without loss of generality. For the rest of this paper, we consider $a_1=\pm1$, $a_2=-\gamma$, $a_3=\pm1$ for $\gamma\in\R$.

To describe our results, we need a few definitions. The parameter domain for 
$(\omega,\gamma)$ is the half-plane $\Omega=(0,\infty)\times \R$.
In each of the 4 cases FF, FD, DF, and DD, we denote the subset of $(\omega,\gamma)\in\Omega$ 
for which a standing wave solution exists by $R_{\ex}$. We denote the boundary of $R_{\ex}$ in $\Omega$ by $\Gamma_{\nex}$ (not including the $\gamma$-axis).
When the standing wave $\phi_{\omega}=\phi_{\omega,\gamma}$ exists, we define
the \emph{stability functional}
\begin{equation}
J(\omega,\gamma) = \frac{\partial}{\partial \omega} Q(\phi_{\omega,\gamma}) =\frac{\partial}{\partial \omega}  \int_\R \phi_{\omega,\gamma}^2(x)\,dx, \quad (\omega,\gamma)\in R_{\ex}.
\end{equation}
As is well known in the stability theory \cite{GrShSt87,GrShSt90} and mentioned previously, under certain assumptions, the sign of $\frac{\partial}{\partial \omega} Q(\phi_\omega)$
determines stability. For our 1D NLS, it follows from Iliev-Kirchev \cite{IlKi93}
that $e^{i\omega t}\phi_ \omega (x)$ is stable when 
$J(\omega,\gamma)>0$, and unstable  when 
$J(\omega,\gamma)<0$; see Lemma \ref {thm:stab}. Because of this, 
the zero level curve of $J$ is of particular interest since it
 is where $J$ changes sign, and indicates the change of the stability property.
The curve of nonexistence $\Gamma_{\nex}$  exists in the FF, FD and DD cases but not in the DF case. As to be shown in Proposition \ref{prop:Rex},
when $\Gamma_{\nex}$ exists, it can be parameterized by a decreasing function
\begin{equation}\label{1.4}
{\omega=\omega^*(\gamma)}
\end{equation}
where $\gamma_1\le \gamma<\infty$, $\gamma \in \R$ and 
$-\infty<\gamma<\gamma_1$,  in the FF, FD and DD cases, respectively.
The two values of $\gamma_1$ for FF and DD cases are different.

{We highlight a few interesting new phenomena observed numerically:
\begin{enumerate}
\item In the FF case with powers 6,7,8, 
for some fixed $\gamma$, the standing wave family $\phi_\omega$ is defined for all $\omega>0$ and is of type USU, i.e., $\phi_\omega$ changes from being unstable to stable and then back to unstable when $\omega$ increases from $0$ to $\infty$. 
Recall we have type SUS for powers 2,3,4 in \cite{LiTsZw21}.
See Figure \ref{FFcase} in Section \ref{S2}. Also see Section \ref{S6} for related propositions.

\item In the FD case with powers $p=3$ and $r=7$, there is no unstable region when $q=5$, while there is an unstable region when $q=6$ for $\gamma$ sufficiently negative. In the latter case, stability change occurs twice for fixed, sufficiently negative $\gamma$, and it is of type SUS. 
See Figure \ref{FDcase}. Also see Section \ref{S7} for related propositions.

\item In the DF case, the standing waves may be all stable, all unstable, or have stability change. For powers 2,2.5,3, stability change occurs at most once for fixed $\gamma$. For powers 3,4,7, stability change occurs twice for fixed, sufficiently negative $\gamma$, and it is of type USU. See Figure \ref{DFcase}. Also see Section \ref{S8} for related propositions.

\item In the DD case, the standing waves may be all stable, or have stability change. When there is stability change, both $\Gamma_{\nex}$ and the stability change curve start from the $\gamma$-axis, and the starting value of $\gamma$ may or may not be the same. See Figure \ref{DDcase}. Also see Section \ref{S9} for related propositions.
\end{enumerate}
}

Some of these numerical observations are proved rigorously.
Among the theoretical results, we parameterize the non-existence curve $\Gamma_\nex$ in all cases (FF, FD and DD) in Proposition \ref{prop:Rex}, study the limit of $J(\omega,\gamma)$ as $(\omega,\gamma) \to \Gamma_\nex$  (Proposition \ref{prop:Gamlimits}),
 as $\omega \to 0^+,\infty$ or as $\gamma \to \pm \infty$ (Propositions \ref{prop:FFlimits}, \ref{prop:FDlimits}, \ref{prop:DFlimits}, \ref{prop:DDlimits} and \ref{prop:DDJ0limits}), and identify regions of $p,q,r,\omega,\gamma$ where $J$ has a fixed sign (Propositions \ref{prop:FFbddstab}, \ref{prop:FDallstab}, \ref{prop:DFJ0pos}--\ref{prop:DFallunStab}). 
 
 Properties of the Beta function (including upper and lower bounds of $\partial_x B(x,\frac12)$ in Lemma \ref{lem:Bxineq}) is used  to calculate the limiting sign of $J(\omega,\gamma)$ as $\omega \to 0^+$ in the D* cases. Descartes' rule of signs for real exponents (Lemma \ref{lem:RoS}) is used repeatedly for existence and other occasions that we need to count sign changing.

In the rest of this paper, we first describe our numerical observations in Section \ref{S2}. 
We then give preliminary results in Section \ref{S3}. 
We consider the existence of standing waves in  Section \ref{S4}, and the limit of $J(\omega,\gamma)$ near $\Gamma_{\nex}$ in Section \ref{S5}.
We state theorems and give detailed proofs for each of the 4 cases FF, FD, DF, and DD in Sections \ref{S6}--\ref{S9}.

\section{Numerical observations}\label{S2}

In this section we present diagrams of the parameter half plane in $\omega$, $\gamma$ for some values of $p,q,r$. The diagrams were generated in MATLAB by evaluating $J(\omega,\gamma)$ on a mesh, and then using the MATLAB contour function to approximate level curves of $J$. The formula (\ref{I-KJ}) was used to evaluate $J$, and the integral in this formula was evaluated using the MATLAB function quadgk. In each diagram, $\Gamma_{\nex}$ is drawn in black and the zero level curve of $J$ is drawn in blue.

In the diagrams for the FF case, the nonexistence curve $\Gamma_{\nex}$ exists and is of the form \eqref{1.4} for {$\gamma_1<\gamma<\infty$.} 
We have $\lim _{\omega \to \omega^*(\gamma)^-}J(\omega,\gamma)= \infty $ and
$\lim _{\omega \to \omega^*(\gamma)^+}J(\omega,\gamma)= -\infty$ on the left and right sides of $\Gamma_{\nex}$
(Propositions \ref{prop:Rex} and \ref{prop:Gamlimits}).

By mean value theorem, in a neighborhood of  the endpoint $(\omega^*(\gamma_1),\gamma_1)$ of $\Gamma_{\nex}$, it emanates a branch of the level curve 
$J=c$ 
for every $c \in \R$, which 
appears to have the same slope as $\Gamma_{\nex}$ at the endpoint.
For a few choices of powers (diagrams 2-6 of Figure \ref{FFcase}), the continuation of these branches occupy the entire $\omega$-$\gamma$ half plane. However, in diagrams 1, 7 and 8 of Figure \ref{FFcase}, some level curves $J=c$ have two branches.

For powers $1.5,2,2.25$ and $1.5,2,2.75$, the zero level curve $J=0$ turns upwards and back towards the nonexistence curve $\Gamma_{\nex}$
from the right. The level curves $J=c$ with $c<0$ are squeezed between the zero level curve and $\Gamma_{\nex}$.
The behavior of the level curves $J=c$ with $c>0$ are different for these two diagrams: 
In the first diagram with powers $1.5,2,2.25$, each of them appears to have two branches. 
For $0<c<60$, one branch emanates from the endpoint $(\omega^*(\gamma_1),\gamma_1)$ and  turns upwards and back towards $(0,\infty)$ along the right side of $J=0$, and another branch starts from $(0,\infty)$ and
slides down along the left side of $\Gamma_{\nex}$ before eventually going to
$\gamma\to -\infty$. For $c>100$, one branch emanates from the endpoint and  turns clockwise towards $(0,\infty)$ along the left side of $\Gamma_{\nex}$, and another branch slides down 
from $(0,\infty)$ along the right side of $J=0$ before eventually going to
$\gamma\to -\infty$. These are consistent with Proposition \ref{prop:FFlimits} (1f, 2e, 3a, 4a) on the limit of $J$ as $\gamma \to \pm \infty$ or as $\omega \to 0,\infty$.

In the second diagram with powers $1.5,2,2.75$, $J=c$ with $c>0$ has only one branch which emanates from the endpoint and  turns clockwise towards $(0,\infty)$ along the left side of $\Gamma_{\nex}$, consistent with Proposition \ref{prop:FFlimits} (1f, 2c, 3c, 4a).

For powers $2,3,4$ the zero level curve turns upwards, but
maintains a positive slope and 
 does not have a maximum $\omega$ value. 
 Level curves $J=c$ emanate from the endpoint and occupy the entire $\omega$-$\gamma$ half plane. Those with $c>0$ turns clockwise towards $(0,\infty)$ along the left side of $\Gamma_{\nex}$, the same as powers $1.5,2,2.75$.
 This is 
consistent with Proposition \ref{prop:FFlimits} (1f, 2c, 3f, 4a). 
 
For powers $3,4,5$, the curve $J=0$ appears to approach the $\omega$ axis as $\omega \to\infty$, and for powers $3,4,7$, the curve goes downwards  as $\omega \to\infty$. 
In both cases, level curves $J=c$ emanating from the endpoint occupy the entire $\omega$-$\gamma$ half plane, those with
$c<0$ go upward as $\omega \to\infty$, and those with $c>0$ go clockwise, some with large $c$ follow $\Gamma_{\nex}$ for a while, but all eventually go downward toward $(0,-\infty)$. This is 
consistent with Proposition \ref{prop:FFlimits} (1d, 2ba, 3f, 4a). 

For powers $3,6,7$ the curve $J=0$ turns down and back towards the $\gamma$ axis, which illustrates the uniform bound on the stable region given in Proposition \ref{prop:FFbddstab}. The level curves $J=c\not=0$ have similar behavior as those for powers $3,4,7$, consistent with Proposition \ref{prop:FFlimits} (1d, 2a, 3f, 4b). 

For powers $5,6,7$, the curve $J=0$ turns down and back towards the $\gamma$ axis, and 
approach $(0,0)$ as $\omega \to 0$. The level curves $J=c$ for $c>0$ emanate from the endpoint and
turns toward $(0,0)$. The level curve $J=-100$ has two branches, one emanates from the endpoint and goes up, the other emanates from $(0,0)$ and goes downward. The level curve $J=-1$ also has two branches, one emanates from the endpoint and turns toward $(0,0)$ below $J=0$, the other is on the right of the end point $(\omega^*(\gamma_1),\gamma_1)$ and has a point for every $\gamma\in\R$.  This is consistent with Proposition \ref{prop:FFlimits} (1b(iv), 2a, 3f, 4b). 
 
Finally, for powers $6,7,8$, the curve $J=0$ turns down and back towards the $\gamma$ axis, and 
turns upwards toward $(0,\infty)$. The level curves $J=c$ for $c>0$ emanate from the endpoint and are squeezed between the curve $J=0$ and $\Gamma_{\nex}$. 
 The level curve $J=-10000$ has two branches, one emanates from the endpoint and goes up on the right side of $\Gamma_{\nex}$ , the other slides down from $(0,\infty)$ from the left side of $J=0$ and goes downward. The level curve $J=-1$ also has two branches, one emanates from the endpoint and turns clockwise, going toward $(0,\infty)$ along the left side of $J=0$, the other is on the right of the end point $(\omega^*(\gamma_1),\gamma_1)$ and has a point for every $\gamma\in\R$.  This is consistent with Proposition \ref{prop:FFlimits} (1a, 2a, 3f, 4b).

\begin{figure}[htp]
\includegraphics[scale = .5]{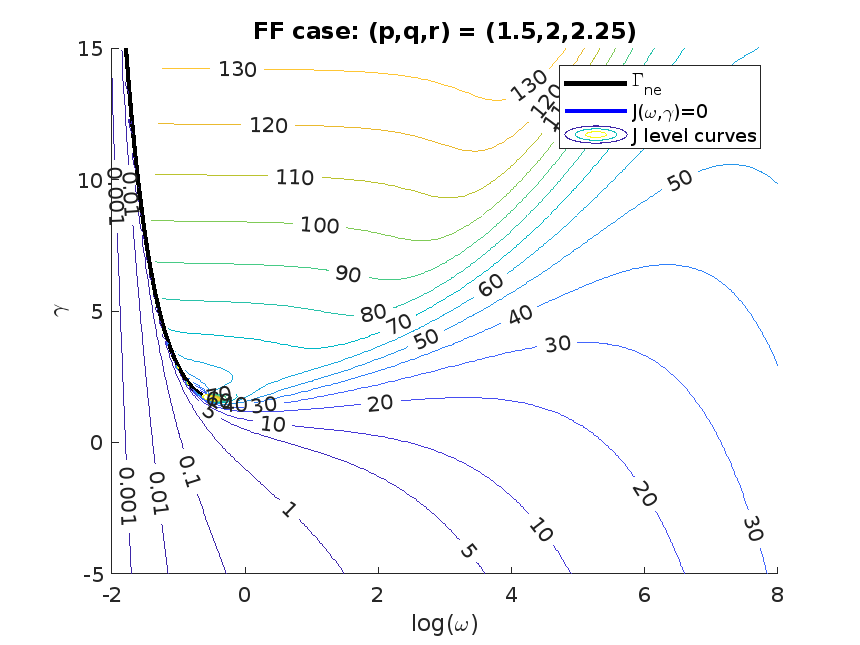}
\includegraphics[scale = .5]{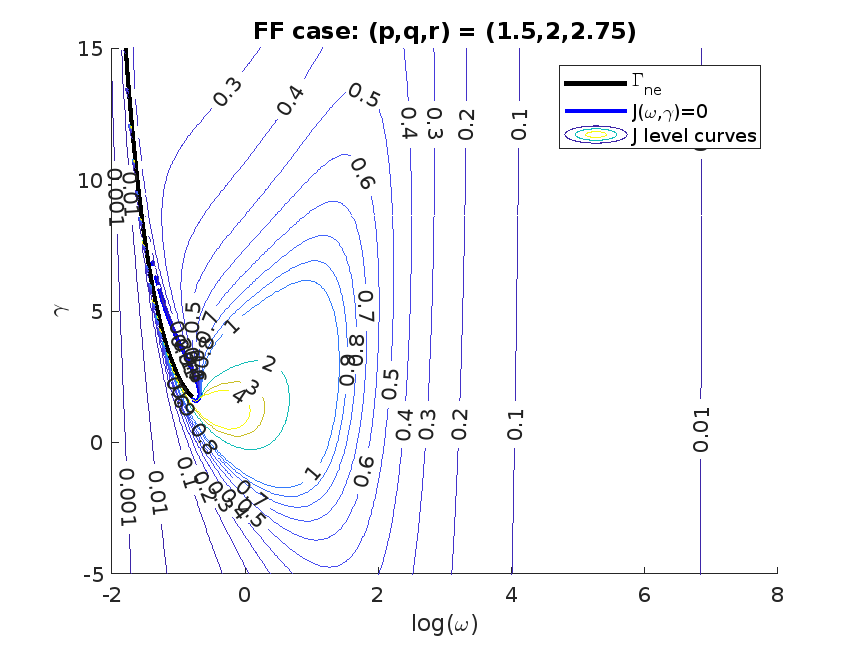}
\includegraphics[scale = .5]{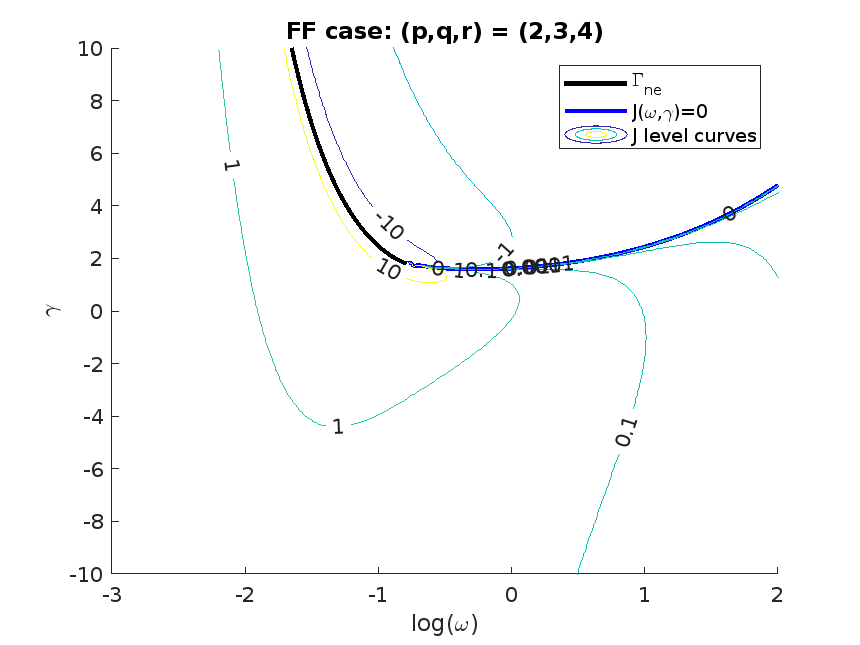}
\includegraphics[scale = .5]{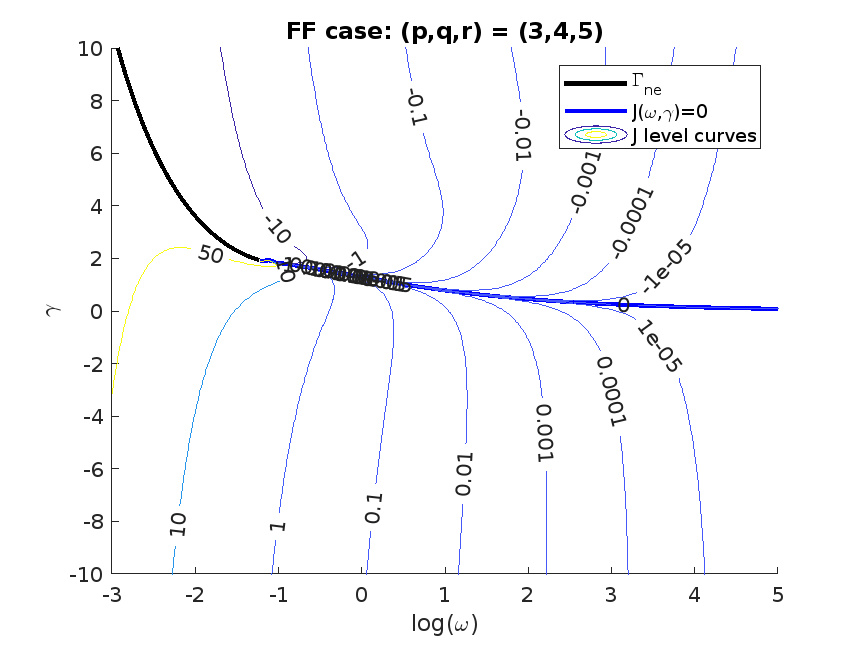}
\includegraphics[scale = .5]{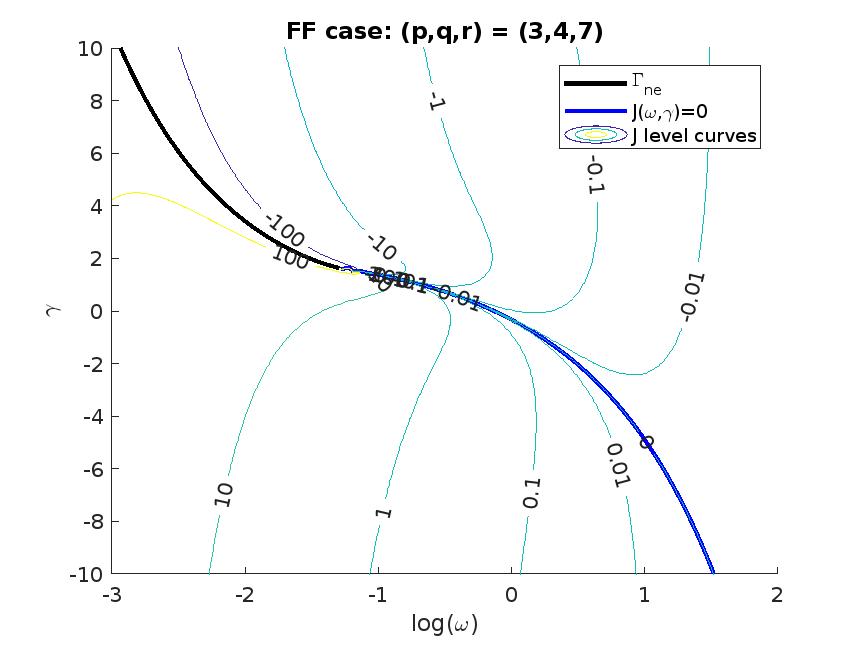}
\includegraphics[scale = .5]{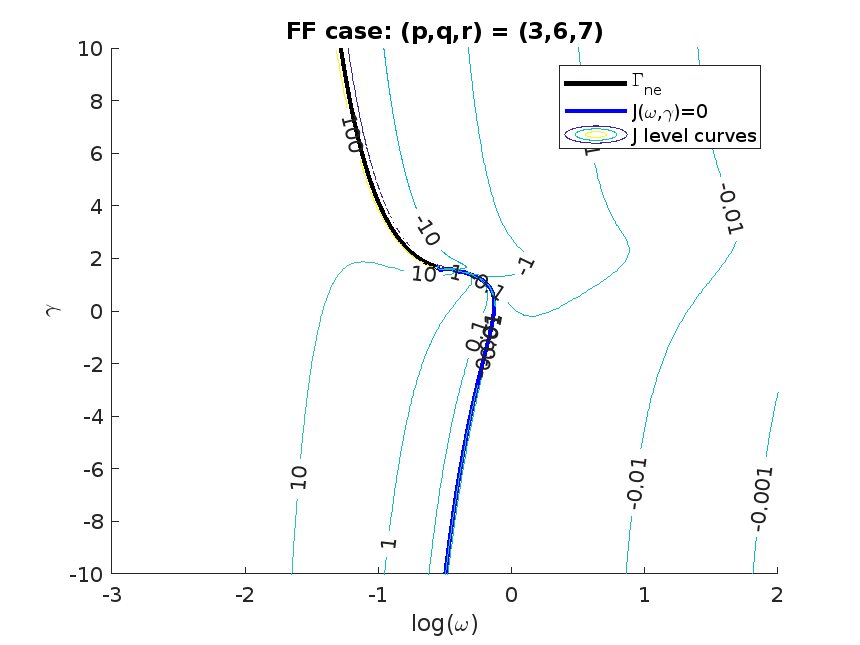}
\includegraphics[scale = .5]{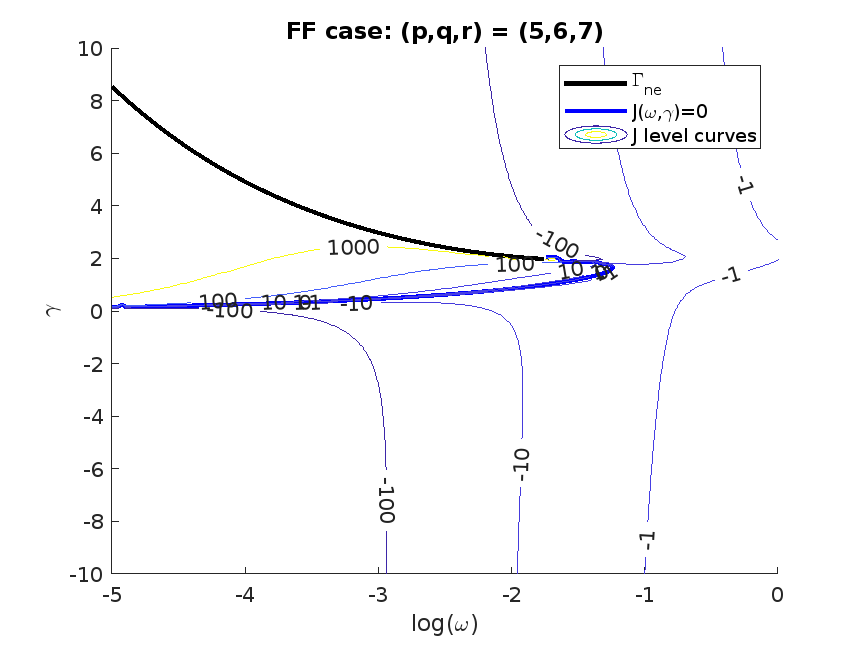}\hfill
\includegraphics[scale = .5]{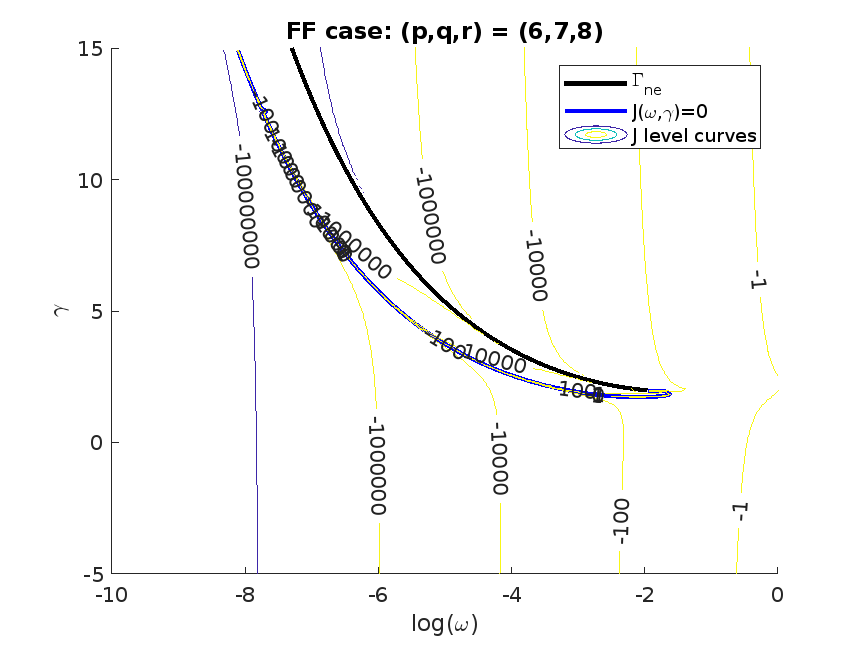}
\caption{FF case}\label{FFcase}
\end{figure}

\newpage
In the diagrams for the FD case, {$\omega^*(\gamma)$ in \eqref{1.4} exists for $\gamma\in\R$,}
$\omega^*(\gamma)\to 0$ as $\gamma\to\infty$, and $\omega^*(\gamma)\to\infty$ as $\gamma\to-\infty$ (cf.~Proposition \ref{prop:Rex} part 2)\nc. We see the existence of an unstable region depending on the value of $q$: For powers $3,5,7$, $J(\omega,\gamma)>0$ for all $(\omega,\gamma)\in R_{\ex}$,  (cf.~Proposition \ref{prop:FDallstab}). For the other 3 diagrams,  there is an unstable region with $J<0$ for sufficiently large $-\gamma$, 
consistent with Proposition \ref{prop:FDlimits} part 2(b).

For powers $3,6,7$, the unstable region stays away from the $\gamma$-axis, consistent with $\lim_{\omega \to 0^+} J(\omega,\gamma)=\infty$ (Proposition \ref{prop:FDlimits} part 1(d)).

For powers $5,6,7$, the zero level curve $J=0$ meets the $\gamma$-axis at $(0,0)$, and the
 unstable region is bordered by the zero level curve and the \emph{negative} $\gamma$-axis. This is consistent with $\lim_{\omega \to 0^+} J(\omega,\gamma)=\text{sign}(\gamma)\infty$ (Proposition \ref{prop:FDlimits} part 1(b)iv).

For powers $5.5, 6,7$, the zero level curve $J=0$ seems to contain a point $(\omega(\gamma),\gamma)$ for all $\gamma \in \R$, and the unstable region is bordered by it and the $\gamma$-axis. This is consistent with $\lim_{\omega \to 0^+} J(\omega,\gamma)=-\infty$ (Proposition \ref{prop:FDlimits} part 1(a)).

\begin{figure}[htp]
\includegraphics[scale = .5]{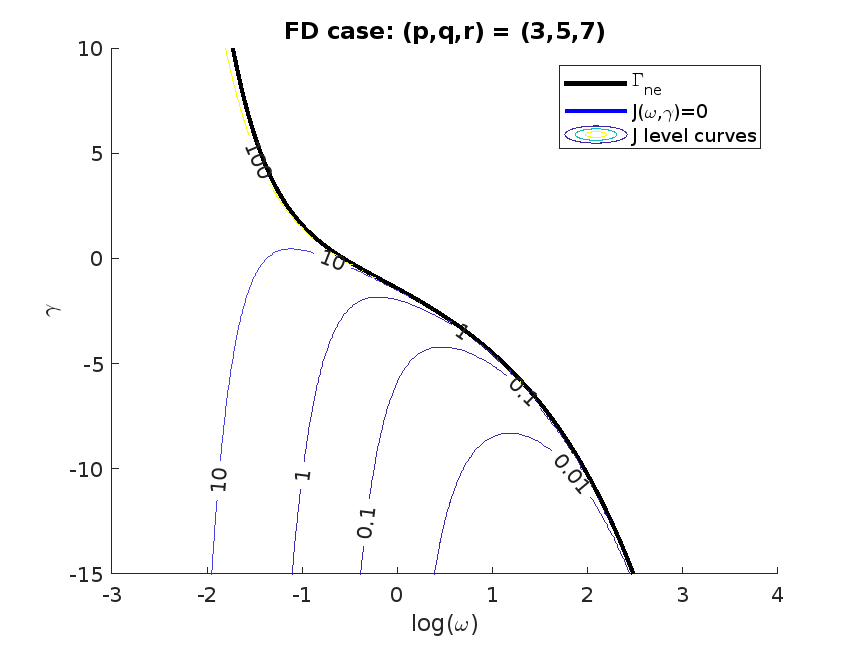}
\includegraphics[scale = .5]{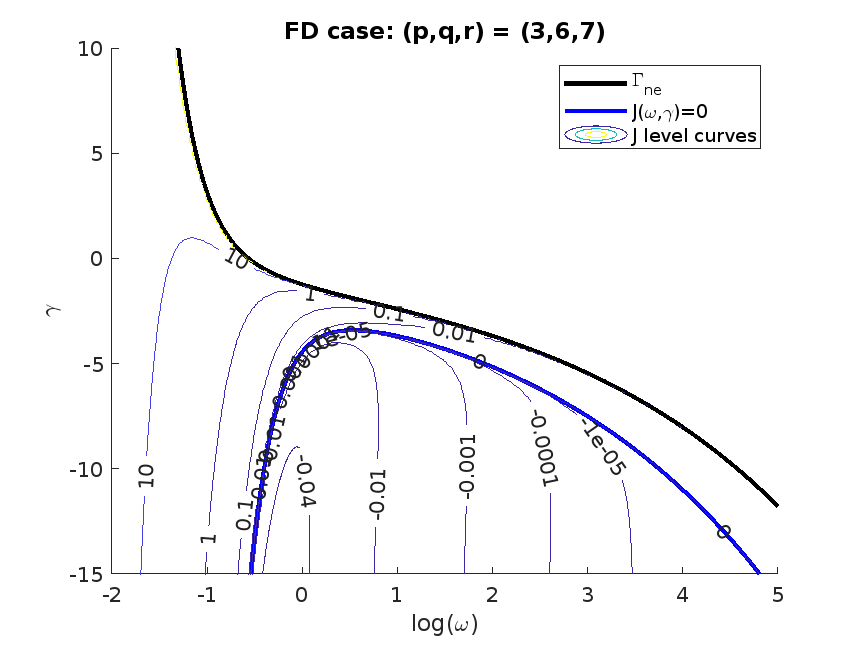}
\includegraphics[scale = .5]{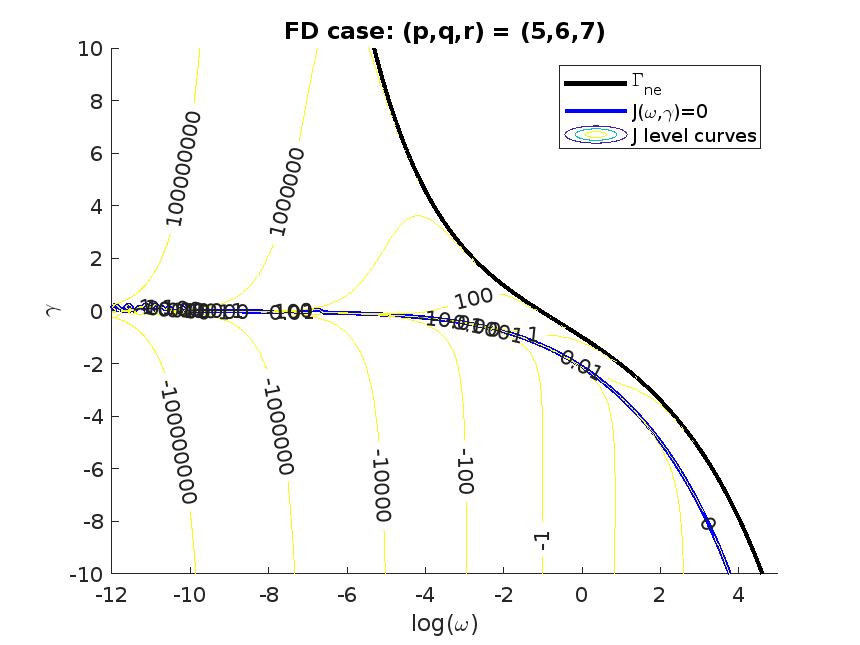}\hfill
\includegraphics[scale = .5]{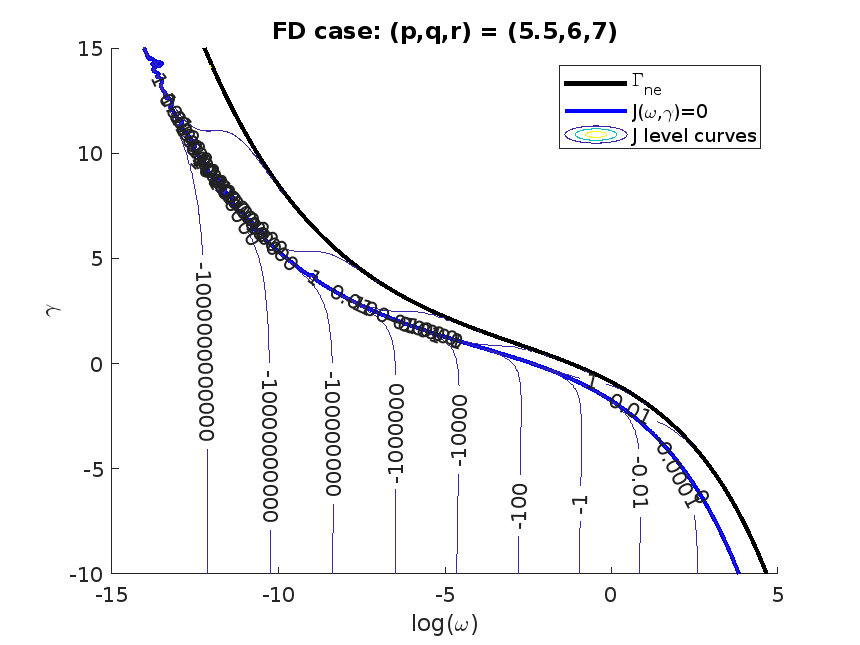}
\caption{FD case}\label{FDcase}
\end{figure}

\newpage
In the DF case, there is no nonexistence curve {and $\phi$ exists for all $(\omega,\gamma) \in \Omega$}. For powers $1.5,2,3$ solutions are stable for all $\omega,\gamma$. Indeed, for all $p,q,r$ that we tested numerically, all solutions appear to be stable when $2q+r\leq7$. This is expected, since the limits of $J$ in Propositions \ref{prop:DFlimits} and \ref{prop:DFJ0pos} are all positive when $2q+r\leq 7$. For powers $2,2.5,3$ we have $2p+q<7<2q+r$, so Proposition \ref{prop:DFJ0US} shows that $J(0,\gamma)>0$ for sufficiently large $-\gamma$, and $J(0,\gamma)<0$ for sufficiently large $\gamma$. We see that this is the case in the diagram for $2,2.5,3$, and the zero level curve appears to have a finite limit as $\omega\to 0$. For powers $3,4,7$, the zero level curve does not meet the $\gamma$ axis, but solutions are stable for large $-\gamma$ as $q<5$, \crm (cf.~Propositions \ref{prop:DFlimits} and \ref{prop:DFJ0neg})\nc. For $3,5,7$ it appears that the {$J$ function is always} negative, which is consistent with Proposition \ref{prop:DFallunStab}.
\begin{figure}[htp]
\includegraphics[scale = .5]{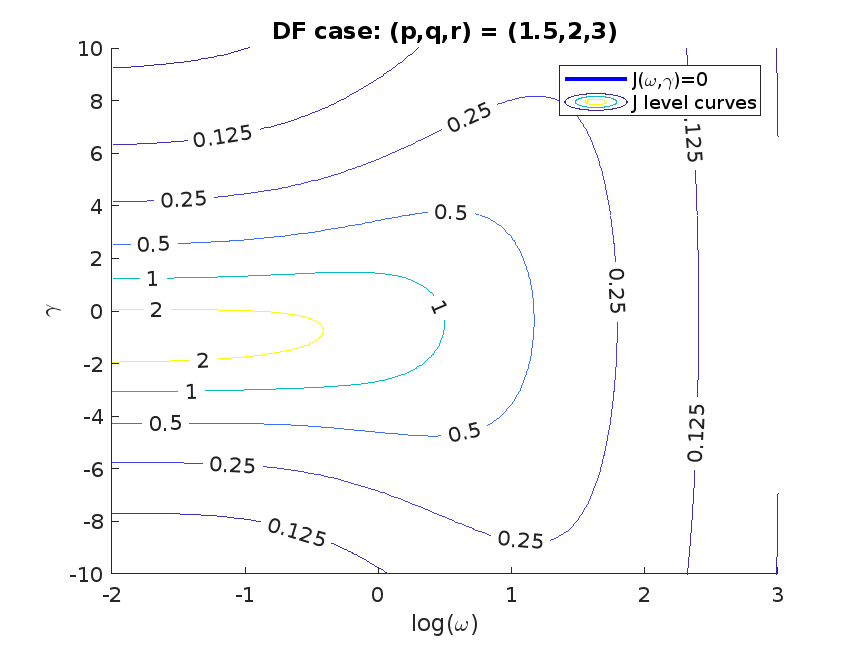}
\includegraphics[scale = .5]{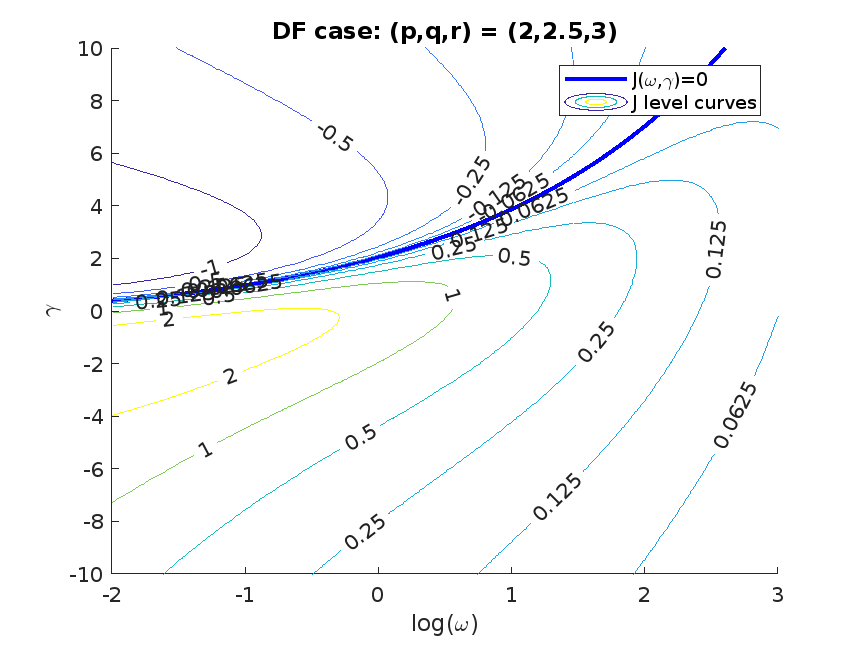}
\includegraphics[scale = .5]{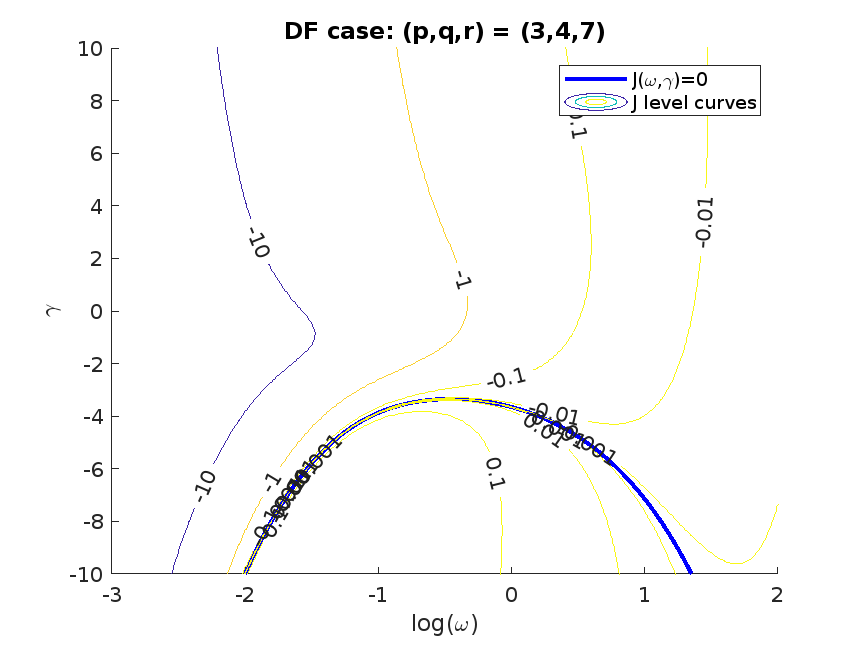}\hfill
\includegraphics[scale = .5]{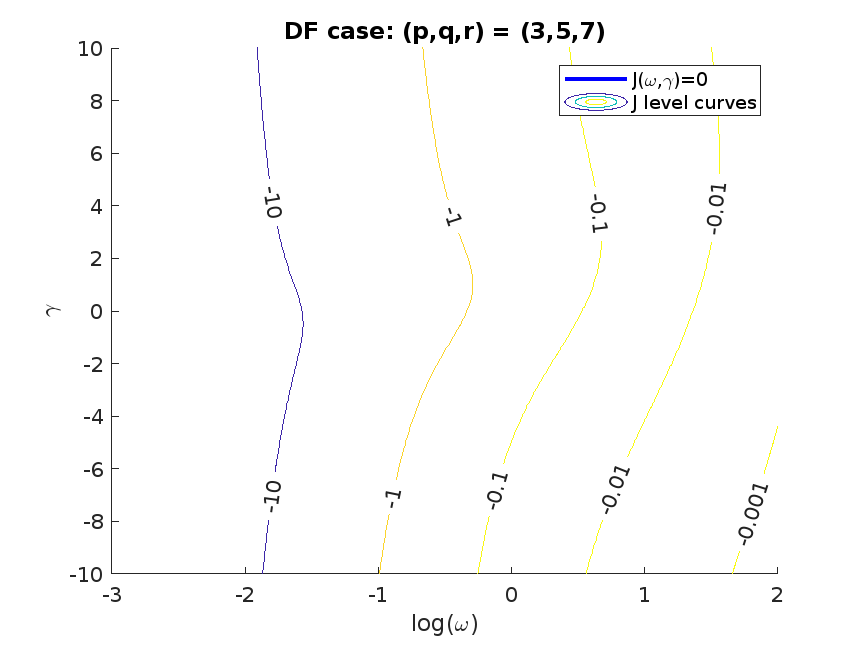}
\caption{DF case}\label{DFcase}
\end{figure}

\newpage
In the DD case, {$\omega^*(\gamma)$ in \eqref{1.4} exists for $-\infty<\gamma<\gamma_1$ and} $\Gamma_{\nex}$ meets the $\gamma$ axis at $\gamma_1$. 
The function $\omega^*(\gamma)$ decreases in $\gamma$, $\omega^*(\gamma_1)=0$ and $\omega^*(\gamma)\to \infty$ as $\gamma\to-\infty$, \crm (cf.~Proposition \ref{prop:Rex} part 4)\nc. 
For powers $2,3,7$, we have $2p+q\leq 7$, and it appears that $J(\omega,\gamma)>0$ for all $(\omega,\gamma)\in R_{\ex}$. For powers $2,4,7$, we have $2p+q>7$, but $p<\frac{7}{3}$. Thus, by Proposition \ref{prop:DDJ0limits}, $J(0,\gamma)<\infty$ for $\gamma<\gamma_1$, and {$\lim_{\gamma\to-\infty}J(0,\gamma)=0^-$}, $\lim_{\gamma\to \gamma_1^+}J(0,\gamma)=\infty$. Indeed, in the diagram for $2,4,7$, $J(\omega,\gamma)<0$ near the $\gamma$ axis for large $-\gamma$, and $J(\omega,\gamma)>0$ near the gamma axis for $\gamma$ close to $\gamma_1$. The zero level curve appears to have a limit in $(-\infty,\gamma_1)$ as $\omega\to 0$ in this case. For powers $3,4,7$ we have $J(\omega,\gamma)\to -\infty$ as $\omega\to 0$ for all $\gamma<\gamma_1$, \crm (cf.~Proposition \ref{prop:DDJ0limits})\nc. In the diagram for $3,4,7$, we see that $J(\omega,\gamma)<0$ near the $\gamma$ axis for all $\gamma<\gamma_1$, and the zero level curve appears to approach $(0,\gamma_1)$. Since $q<5$, we know by Proposition \ref{prop:DDlimits} \crm part 1 \nc that $J(\omega,\gamma)>0$ for large $-\gamma$, and indeed the zero level curve turns back towards the $\gamma$ axis in the diagram. For powers $3,5,7$, we have $q\geq 5$, and the zero level curve does not turn back towards the $\gamma$ axis, \crm which is in agreement with the theoretical result in Proposition \ref{prop:DDlimits} part 2.\nc

\begin{figure}[htp]
\includegraphics[scale = .5]{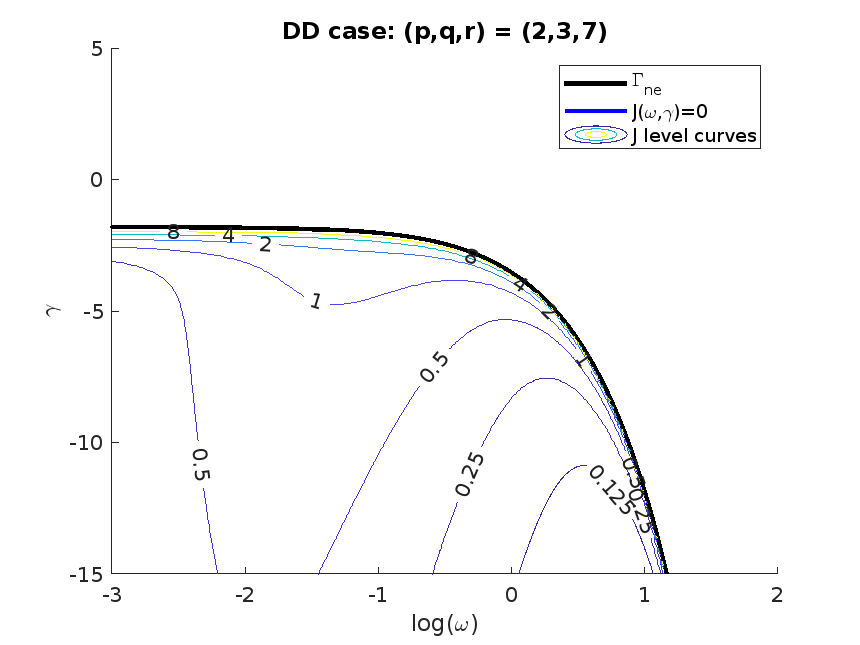}
\includegraphics[scale = .5]{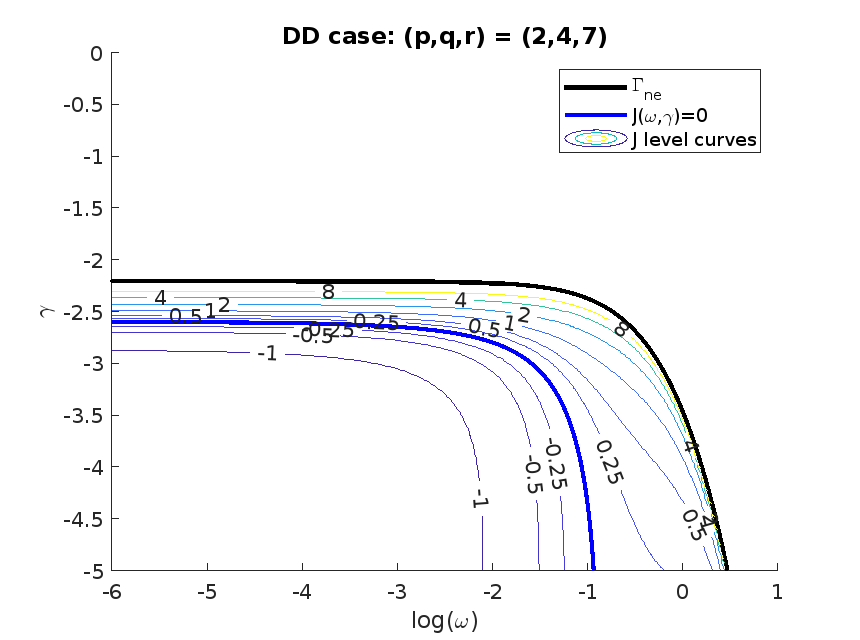}
\includegraphics[scale = .5]{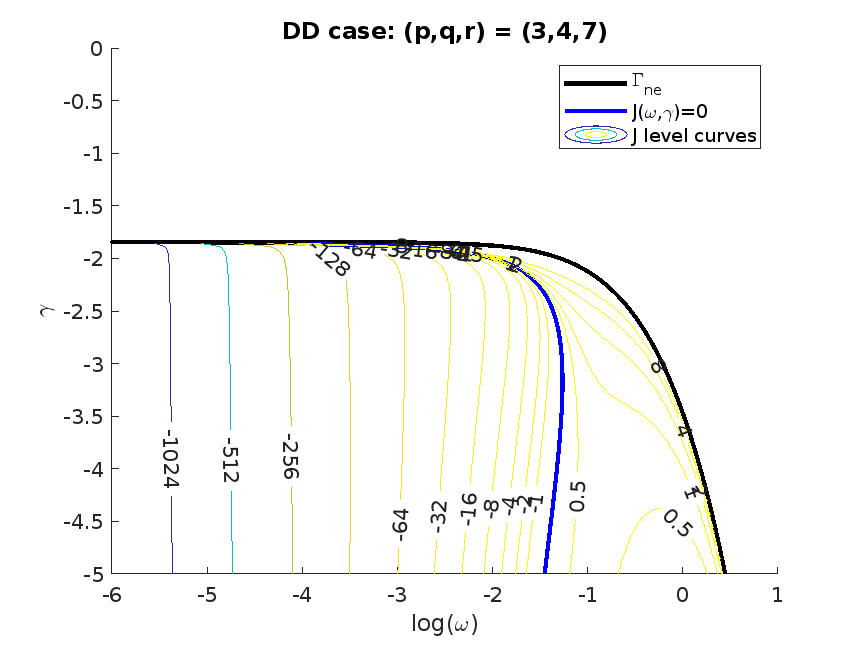}\hfill
\includegraphics[scale = .5]{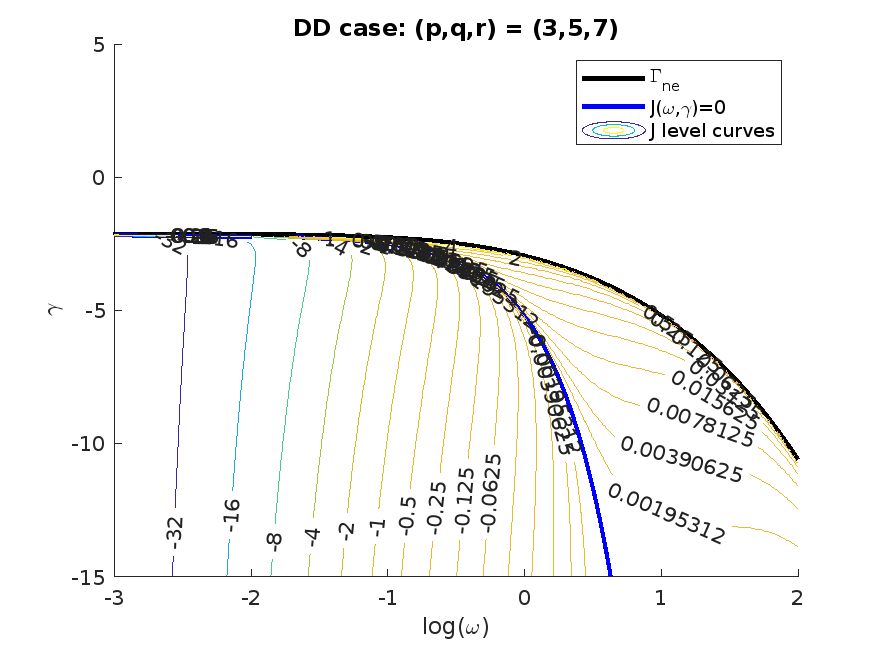}
\caption{DD case}\label{DDcase}
\end{figure}

\section{Preliminaries}\label{S3}
As explained in Section \ref{S1}, for the NLS \eqref{eq:nls}
we may consider $a_1=\pm1$, $a_2=-\gamma$, $a_3=\pm1$ for $\gamma\in\R$. Our standing wave profile $\phi$ then satisfies
\begin{align*}
\phi''=g(\phi) = \omega\phi -f(\phi),\quad f(\phi) = a_1|\phi|^{p-1}\phi-\gamma|\phi|^{q-1}\phi +a_3 |\phi|^{r-1}\phi,
\end{align*}
\begin{align*}
\quad \phi(0)>0,\quad \lim_{t\to\pm\infty}\phi(t) = 0.
\end{align*}
We use the following general existence result of Berestycki and Lions \cite{BeLi83-1} to determine the existence of solutions to this problem.
\begin{lemma}[\cite{BeLi83-1}] \label{BeLi}
Let $g\in C(\R)$ be a locally Lipschitz function with $g(0)=0$ and let $G(t)=\int_0^tg(s)ds$. A necessary and sufficient condition for the existence of a solution to the problem
\begin{align*}
\phi\in C^2(\R),\quad \lim_{t\to\pm\infty}\phi(t)=0,\quad\phi(0)>0,\quad\phi''=g(\phi),
\end{align*}
is that
\begin{equation}\label{existence}
\phi_0 = \inf\{t>0:G(t)=0\}\text{ exists},\quad \phi_0>0, \quad g(\phi_0)<0.
\end{equation}
When \eqref{existence} is satisfied, the solution $\phi(t)$ is unique up to translations, and this solution, after a suitable translation, is even, positive, $\phi(0)=\phi_0$, and $\phi'(t)<0$ for $t>0$.

\end{lemma}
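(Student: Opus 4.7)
The plan is to exploit the Hamiltonian structure of the autonomous ODE $\phi''=g(\phi)$, namely conservation of the energy $E(\phi,\phi')=\tfrac12(\phi')^2 - G(\phi)$ along trajectories, and reduce the second order equation to the first order separable equation $\phi'=-\sqrt{2G(\phi)}$ on the half-line where $\phi$ is decreasing.

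For necessity, suppose such a $\phi$ exists. Since $\phi(t)\to 0$ as $t\to\pm\infty$ and $g(0)=0$, continuity gives $\phi''(t)=g(\phi(t))\to 0$. Combined with the boundedness of $\phi$, this forces $\phi'(t)\to 0$ (otherwise $\phi$ could not vanish at infinity). Hence the conserved energy equals $0$, yielding $(\phi')^2=2G(\phi)$. Since $\phi(0)>0$ and $\phi$ decays, $\phi$ attains a positive maximum $M$; at such a point $\phi'=0$ forces $G(M)=0$, and $\phi''\le 0$ there forces $g(M)\le 0$. Because $(\phi')^2=2G(\phi)\ge 0$ implies $G\ge 0$ on the range of $\phi$, the maximum $M$ must equal $\phi_0$, the first positive zero of $G$, with $G>0$ on $(0,\phi_0)$. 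Finally $g(\phi_0)<0$ strictly: if $g(\phi_0)=0$, then $(\phi_0,0)$ is an equilibrium of the first-order system and uniqueness would give $\phi\equiv\phi_0$, contradicting decay.

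For sufficiency, assume \eqref{existence}. Let $\phi$ be the unique local solution of $\phi''=g(\phi)$ with $\phi(0)=\phi_0$, $\phi'(0)=0$; it is automatically even in $t$ by the time-reversal symmetry of the autonomous equation. The energy equals $0$, so $(\phi')^2=2G(\phi)$. Since $g(\phi_0)<0$ we have $\phi''(0)<0$, so $\phi$ decreases for small $t>0$ into $(0,\phi_0)$ where $G>0$. While $\phi$ remains in $(0,\phi_0)$, it satisfies $\phi'=-\sqrt{2G(\phi)}$ and is therefore strictly decreasing. It cannot reach $\phi=0$ in finite time, for then uniqueness at the equilibrium $(0,0)$ of the first order system would force $\phi\equiv 0$. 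Thus $\phi$ is defined on all of $(0,\infty)$ and $\phi(t)\searrow\ell$ for some $\ell\in[0,\phi_0)$. The limit $\ell$ must satisfy $G(\ell)=0$ (from $\phi'\to -\sqrt{2G(\ell)}$ together with $\phi$ bounded), and since no zero of $G$ lies strictly in $(0,\phi_0)$, we conclude $\ell=0$.

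The main obstacle I expect is the pair of decay arguments at infinity: establishing $\phi'\to 0$ in the necessity direction, so that the energy constant can be pinned down to $0$, and ruling out the scenarios $\phi(t)\to\ell>0$ or $\phi(t)\to 0$ in finite time in the sufficiency direction. Both are resolved by ODE uniqueness at equilibria combined with the first-order reduction. Uniqueness up to translation, and the remaining structural properties, then follow at once: any solution must pass through its unique positive maximum $\phi_0$, and translating so that the maximum occurs at $t=0$ reduces the problem to the Cauchy problem solved above, from which evenness, positivity, and strict monotonicity for $t>0$ are immediate.
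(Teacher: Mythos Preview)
The paper does not prove this lemma; it is quoted from Berestycki--Lions \cite{BeLi83-1} and used as a black box. Your argument is the standard phase-plane/energy proof and is essentially correct.

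A couple of places could be tightened. In the necessity direction, the step ``$\phi''\to 0$ and $\phi$ bounded forces $\phi'\to 0$'' is cleaner if you argue directly from energy conservation: since $E=\tfrac12(\phi')^2-G(\phi)$ is constant and $G(\phi(t))\to G(0)=0$, the quantity $(\phi')^2$ has a limit, and if that limit were positive $\phi$ would be unbounded. Also, when you conclude $M=\phi_0$, you are implicitly using that any zero of $G$ in $(0,M)$ would be a local minimum of $G$ (because $G\ge 0$ on the range of $\phi$), hence an equilibrium, hence ruled out by ODE uniqueness; it is worth making that explicit, since it simultaneously shows $\phi_0>0$. In the sufficiency direction, the assertion that $\phi$ is defined on all of $(0,\infty)$ deserves one line: $\phi$ stays in the compact set $[0,\phi_0]$ and $\phi'$ is bounded there via the energy, so the solution continues globally. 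With these small clarifications the proof is complete.
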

Following \cite{IlKi93} and \cite[(2.8)]{LiTsZw21}, we define
\begin{equation}\label{U.def}
U(s) =U(\omega,\gamma,s)= 2G(\sqrt{s}) = \omega s -\frac{2a_1s^\frac{p+1}{2}}{p+1}+\frac{2\gamma s^\frac{q+1}{2}}{q+1}-\frac{2a_3s^\frac{r+1}{2}}{r+1}.
\end{equation}
We also define $F_1\in C(\R\times[0,\infty))$ by (it differs from \cite[(2.8)]{LiTsZw21} by a factor of $-s$)
\begin{align*}
sF_1(\gamma,s)= 2F(\sqrt{s})=\frac{2a_1s^\frac{p+1}{2}}{p+1}-\frac{2\gamma s^\frac{q+1}{2}}{q+1}+\frac{2a_3s^\frac{r+1}{2}}{r+1}
\end{align*}
(where $F(t)=\int_0^t f(s)ds$)
so that, for fixed $\gamma$,
\begin{align*}
U(s) = s(\omega-F_1(s)).
\end{align*}
The existence condition \eqref{existence} now reads (with $a=\phi_0^2$)
\begin{align}
a=\inf\{s>0: F_1(s)=\omega\}\text{ exists}, \quad a>0,\quad  U'(a)<0.
\end{align}

In the following 3 lemmas we describe the quantity $a$ as a function of $\omega$ and $\gamma$. Note that the existence of $a$ only implies $U'(a)\le0$, not $U'(a)<0$.

\begin{lemma}\label{lem:aomega}
Fix $\gamma\in \R$, and consider $a$ as a function of $\omega$. For any $\omega_1>0$, if $a(\omega_1)$ exists, then $a$ is defined for $\omega\in (0,\omega_1)$ and is increasing on $(0,\omega_1)$. In the F* cases, $a\to 0$ as $\omega\to 0$. In the *F cases, $a$ is defined for $\omega\in (0,\infty)$, and $a(\omega)\to\infty$ as $\omega\to \infty$. In the D* cases, there is an $a_0>0$ such that $a(\omega)>a_0$ for all $\omega>0$. In the *D cases, $U$ has no positive zeros for $\omega$ sufficiently large.
\end{lemma}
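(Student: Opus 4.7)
The plan is to observe that since $U(s)=s(\omega-F_1(s))$ and $s>0$, the positive zeros of $U$ coincide with the solutions of $F_1(s)=\omega$, so $a(\omega)$ is the smallest positive point at which the continuous function $s\mapsto F_1(\gamma,s)$ attains the value $\omega$. The proof then reduces to a careful reading of the endpoint behaviour of $F_1$ on $(0,\infty)$. From the formula, the leading term near $s=0^+$ is $\tfrac{2a_1}{p+1}s^{(p-1)/2}$, so $F_1(s)\to 0$ with sign $a_1$ (positive in F$\ast$ cases, negative in D$\ast$ cases); likewise the $s\to\infty$ leading term is $\tfrac{2a_3}{r+1}s^{(r-1)/2}$, so $F_1(s)\to+\infty$ in $\ast$F cases and $-\infty$ in $\ast$D cases.

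\textbf{Parts 2 and 4} follow from the $s=0^+$ behaviour. In an F$\ast$ case, $F_1>0$ just above zero, so for small $\omega>0$ the intermediate value theorem produces a small $s$ with $F_1(s)=\omega$, driving $a(\omega)\to 0$ as $\omega\to 0^+$. In a D$\ast$ case, set $a_0=\inf\{s>0:F_1(s)=0\}$; since $F_1$ is continuous and strictly negative on a right neighbourhood of $0$, $a_0>0$, and any $s$ with $F_1(s)=\omega>0$ must exceed $a_0$, giving the uniform lower bound. \textbf{Parts 3 and 5} follow from the $s\to\infty$ behaviour. In a $\ast$F case, $F_1\to+\infty$, so IVT produces a root of $F_1-\omega$ for every $\omega>0$, and a compactness argument (if $a(\omega_n)\le M$ for $\omega_n\to\infty$ then $F_1$ would be unbounded on the compact set $[\tfrac{1}{n_0},M]$ for large $n_0$) forces $a(\omega)\to\infty$. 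In a $\ast$D case, $F_1$ is continuous on $(0,\infty)$ with endpoint limits $0$ and $-\infty$, so it attains a finite maximum $M$; for $\omega>M$ the equation $F_1(s)=\omega$ has no positive solution, hence $U$ has no positive zero.

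\textbf{Part 1} is the main obstacle. Suppose $\bar a:=a(\omega_1)$ exists, so $F_1(\bar a)=\omega_1$ by continuity. The first step is the ``first crossing'' claim that $F_1(s)<\omega_1$ throughout $(0,\bar a)$: if instead $F_1(s_*)\ge\omega_1$ at some $s_*\in(0,\bar a)$, then using $F_1(s)\le 0<\omega_1$ for $s$ near $0^+$ (which holds in both F$\ast$ and D$\ast$ cases), IVT would yield a root of $F_1-\omega_1$ in $(0,s_*]$, contradicting $\bar a=\inf$. With this in hand, for any $\omega\in(0,\omega_1)$ I would invoke IVT on $(0,\bar a)$ using $F_1(0^+)\le 0<\omega$ and $F_1(\bar a)=\omega_1>\omega$ to extract some $\tilde s\in(0,\bar a)$ with $F_1(\tilde s)=\omega$; this simultaneously proves $a(\omega)$ exists and gives $a(\omega)\le\tilde s<\bar a$. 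Strict monotonicity on all of $(0,\omega_1)$ then follows by applying the same argument between any two parameters $\omega<\omega'<\omega_1$.

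The delicate point worth flagging is that the ``first crossing'' argument must be extracted purely from the infimum definition and the sign of $F_1$ near $0^+$, because $F_1$ is generally not monotone on $(0,\bar a)$ in the regimes we care about (especially in D$\ast$ cases, where $F_1$ must change sign at least once before reaching $\omega_1$), so one cannot simply invert $F_1$ on an interval.
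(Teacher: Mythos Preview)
Your argument is correct in substance and follows the same strategy as the paper, reducing everything to the endpoint behaviour of $F_1$ and the intermediate value theorem. One slip to fix: you assert that $F_1(s)\le 0$ for $s$ near $0^+$ ``in both F$\ast$ and D$\ast$ cases,'' but in the F$\ast$ cases $a_1>0$ and $F_1(s)\sim \tfrac{2a_1}{p+1}s^{(p-1)/2}>0$ there; the correct and sufficient fact in all cases is simply $F_1(0)=0<\omega$, which is exactly what the paper invokes. With that correction your IVT steps go through unchanged. The paper's handling of Part~1 is also a touch leaner than yours: it applies IVT directly on $[0,a(\omega_1)]$ via $F_1(0)=0<\omega_2<\omega_1=F_1(a(\omega_1))$ to produce $b\in(0,a(\omega_1))$ with $F_1(b)=\omega_2$, so your separate ``first crossing'' claim that $F_1<\omega_1$ on $(0,\bar a)$, while true, is not actually needed.
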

\begin{proof}
Let $\omega_2\in (0,\omega_1)$. Since $a(\omega_1)$ exists, $F_1(a(\omega_1))=\omega_1>\omega_2>0=F_1(0)$. By continuity of $F_1$, there is a $b\in (0, a(\omega_1))$ such that $F_1(b)=\omega_2$. Hence $a(\omega_2)$ exists and $a(\omega_2)\leq b<a(\omega_1)$.

In the F* cases, $F_1$ is increasing on a neighbourhood of $0$, so the first positive zero of $\omega-F_1(s)$ approaches $0$ as $\omega\to 0$.

In the *F cases, $F_1$ has a positive leading coefficient. As $F_1(0)=0$, this implies that $a(\omega)$ exists for all $\omega>0$. Since $F_1$ is continuous on $[0,\infty)$, we must have $a(\omega)\to \infty$ as $\omega\to \infty$.

In the D* cases, suppose that $\gamma$ is such that there is an $\omega_0>0$ such that $a(\omega_0,\gamma)$ exists. Since $a_1<0$, $F_1(s)<0$ for small $s>0$, and since $a(\omega_0)$ exists, $F_1$ has a smallest positive zero $a_0$. Hence $\omega-F_1(s)>\omega$ for $0<s<a_0$ and $\omega>0$, and hence $a(\omega)>a_0$ for all $\omega>0$.

In the *D cases, $F_1$ is bounded above on $(0,\infty)$. Hence $U$ has no positive zero for $\omega$ sufficiently large.
\end{proof}

\begin{lemma}\label{lem:agamma}
Fix $\omega>0$, and consider $a$ as a function of $\gamma$. For any $\gamma_1\in \R$, if $a(\gamma_1)$ exists, then $a(\gamma)$ exists for $\gamma<\gamma_1$ and is increasing on $(-\infty, \gamma_1)$. For any $\omega$, $a(\gamma)$ exists for $-\gamma$ sufficiently large. Moreover, $a(\gamma)\to 0$ as $\gamma\to -\infty$. In the *F cases, $a(\gamma)$ exists for all $\gamma\in \R$ and $a(\gamma)\to\infty$ as $\gamma\to \infty$.
\end{lemma}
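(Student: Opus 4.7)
My plan rests on the key observation that $F_1$ is explicit and affine in $\gamma$,
\[
F_1(\gamma,s) = \frac{2a_1 s^{(p-1)/2}}{p+1} - \frac{2\gamma s^{(q-1)/2}}{q+1} + \frac{2a_3 s^{(r-1)/2}}{r+1},
\]
with coefficient $-\tfrac{2s^{(q-1)/2}}{q+1}$ of $\gamma$, strictly negative for each $s>0$, while $F_1(\gamma,0)=0$ by continuous extension ($p>1$). Thus $F_1(\gamma,s)$ is \emph{strictly decreasing} in $\gamma$ at each fixed $s>0$; this monotonicity mirrors the role played by monotonicity in $\omega$ in Lemma \ref{lem:aomega} and drives the whole argument.

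For the first statement, I would first show that if $a(\gamma_1)$ exists, then for any $\gamma<\gamma_1$ the inequality $F_1(\gamma,a(\gamma_1))>F_1(\gamma_1,a(\gamma_1))=\omega$, combined with $F_1(\gamma,0)=0<\omega$, yields by IVT a zero of $F_1(\gamma,\cdot)-\omega$ in $(0,a(\gamma_1))$. Taking the infimum of positive zeros gives $a(\gamma)\in(0,a(\gamma_1))$, which simultaneously proves existence for $\gamma<\gamma_1$ and strict monotonicity of $\gamma\mapsto a(\gamma)$ on $(-\infty,\gamma_1)$.

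Next, for existence when $-\gamma$ is large and for $a(\gamma)\to 0^+$ as $\gamma\to-\infty$, I would fix an arbitrary $\delta>0$ and note that $\gamma\mapsto F_1(\gamma,\delta)$ is affine with strictly negative slope, so $F_1(\gamma,\delta)\to+\infty$ as $\gamma\to-\infty$. In particular $F_1(\gamma,\delta)>\omega$ once $-\gamma$ is sufficiently large, and IVT on $[0,\delta]$ forces $a(\gamma)\in(0,\delta)$. Since $\delta$ was arbitrary, $a(\gamma)\to 0^+$.

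Finally, for the *F case ($a_3=1$), existence for every $\gamma\in\R$ is immediate: the term $\tfrac{2s^{(r-1)/2}}{r+1}$ dominates as $s\to\infty$ so $F_1(\gamma,s)\to+\infty$, and with $F_1(\gamma,0)=0<\omega$ IVT yields a zero. I expect the remaining claim $a(\gamma)\to\infty$ as $\gamma\to+\infty$ to be the main obstacle: near $s=0$ the relation $p<q$ makes $s^{(p-1)/2}$ dominate over $s^{(q-1)/2}$, so $F_1(\gamma,\cdot)$ does not tend to $-\infty$ uniformly on any neighborhood of $0$. To handle this, given $M>0$, I would split $[0,M]=[0,\eps]\cup[\eps,M]$: on $[\eps,M]$ the bound $s^{(q-1)/2}\ge\eps^{(q-1)/2}$ yields $F_1(\gamma,s)\le C_{\eps,M}-\tfrac{2\gamma\eps^{(q-1)/2}}{q+1}\to-\infty$ uniformly as $\gamma\to+\infty$; on $[0,\eps]$, dropping the (nonpositive for $\gamma>0$) middle term gives the $\gamma$-independent bound $F_1(\gamma,s)\le\tfrac{2\max(a_1,0)\eps^{(p-1)/2}}{p+1}+\tfrac{2\eps^{(r-1)/2}}{r+1}$, which is made $<\omega$ by choosing $\eps$ small first. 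Taking $\gamma$ large thereafter forces $F_1(\gamma,\cdot)<\omega$ on all of $[0,M]$, hence $a(\gamma)>M$, and arbitrariness of $M$ completes the proof.
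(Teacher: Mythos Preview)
Your proposal is correct and follows essentially the same approach as the paper: both use the strict monotonicity of $F_1(\gamma,s)$ in $\gamma$ together with the intermediate value theorem, and the arguments for the first three claims are virtually identical.

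The only notable difference is in the final step ($a(\gamma)\to\infty$ in the *F cases). You treat this as the main obstacle and give a careful splitting argument on $[0,\eps]\cup[\eps,M]$ to show directly that $F_1(\gamma,\cdot)<\omega$ on all of $[0,M]$ for large $\gamma$. The paper instead dispatches this in one line, implicitly relying on the monotonicity just proved: since $\gamma\mapsto a(\gamma)$ is increasing and positive, if it were bounded then $a(\gamma)\to L\in(0,\infty)$, and passing to the limit in $F_1(\gamma,a(\gamma))=\omega$ gives a contradiction because the middle term $-\tfrac{2\gamma a(\gamma)^{(q-1)/2}}{q+1}\to-\infty$ while the other two terms stay bounded. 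Your direct argument is self-contained and avoids invoking monotonicity a second time; the paper's route is shorter once that monotonicity is in hand.
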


\begin{proof}
Let $\gamma_2<\gamma_1$. For $\gamma = \gamma_1$, and $a=a(\gamma_1)$, we have
\begin{align}
\omega = F_1(a) = \frac{2a_1}{p+1}a^\frac{p-1}{2}-\frac{2\gamma}{q+1}a^\frac{q-1}{2}+\frac{2a_3}{r+1}a^\frac{r-1}{2},\label{F1}
\end{align}
and the right hand side is greater than $\omega$ for $\gamma = \gamma_2$, $a=a(\gamma_1)$. Hence, by continuity, there is a $b\in (0,a(\gamma_1))$ such that (\ref{F1}) is satisfied for $\gamma= \gamma_2$ and $a=b$. Hence $a(\gamma_2)$ exists, and $a(\gamma_2)\leq b< a(\gamma_1)$. For any fixed value of $a, \omega >0$, we can make the right hand side of (\ref{F1}) greater than $\omega$ by taking $-\gamma$ sufficiently large. It follows that $a(\gamma)$ exists for sufficiently large $-\gamma$, and $a(\gamma)\to 0$ as $\gamma\to -\infty$. In the *F cases, there is always an $a>0$ that satisfies (\ref{F1}). Moreover, if (\ref{F1}) can be satisfied for all $\gamma\in\R$, we must have $a\to \infty$ as $\gamma\to\infty$.
\end{proof}
\begin{lemma}\label{lem:altcts}
\textup{(a)} For any $\bar\omega>0$ and $\bar\gamma\in\R$ such that $a(\omega,\gamma)$ exists for $0<\omega<\bar\omega$, $\gamma<\bar\gamma$, $a(\bar\omega,\bar\gamma)$ exists and $\lim_{\omega\to \bar\omega^-,\gamma\to\bar\gamma^-}a(\omega,\gamma) = a(\bar\omega,\bar\gamma)$.

\textup{(b)}
For any $\bar \omega>0$ and $\bar \gamma\in\R$ such that $a(\omega,\gamma)$ exists for $\bar\omega < \omega<\bar\omega+\eps$, $\bar\gamma< \gamma<\bar\gamma+\eps$ for some $\eps>0$, the limit $\lim_{\omega\to \bar\omega^+,\gamma\to\bar\gamma^+}a(\omega,\gamma) $ and $a(\bar\omega,\bar\gamma)$ both exist. Denote  $U(\omega,\gamma,s) = U(s)$ for parameters $\omega,\gamma$ as in \eqref{U.def}. If there is a $\delta>0$ such that $U(\bar\omega,\bar\gamma,s)\geq 0$ for $a(\bar\omega,\bar\gamma)\leq s<a(\bar\omega,\bar\gamma)+\delta$, then $\lim_{\omega\to \bar\omega^+,\gamma\to\bar\gamma^+}a(\omega,\gamma)>a(\bar\omega,\bar\gamma)$ . Otherwise $\lim_{\omega\to \bar\omega^+,\gamma\to\bar\gamma^+}a(\omega,\gamma)=a(\bar\omega,\bar\gamma)$.
\end{lemma}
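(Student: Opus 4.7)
My plan is to exploit the joint monotonicity of $a$ in $\omega$ and $\gamma$ established in Lemmas \ref{lem:aomega} and \ref{lem:agamma}, together with continuity of $F_1$ on $\R \times [0, \infty)$ and the intermediate value theorem. In both parts the one-sided limit exists automatically as a monotone supremum/infimum; the work is to identify it with $a(\bar\omega, \bar\gamma)$, and in part (b) to detect when the limit strictly exceeds $a(\bar\omega, \bar\gamma)$.

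For (a), set $L := \sup\{a(\omega, \gamma): 0 < \omega < \bar\omega,\ \gamma < \bar\gamma\}$, which equals the stated limit by monotonicity. To see $L < \infty$, I analyze the leading $s \to \infty$ behavior of $F_1$: in the *F cases $F_1(\gamma, s) \to +\infty$ uniformly for $\gamma$ in a compact neighborhood of $\bar\gamma$, so $F_1(\gamma, a(\omega, \gamma)) = \omega < \bar\omega$ keeps $a$ bounded; in the *D cases $F_1 \to -\infty$ and the positivity $F_1(\gamma, a) = \omega > 0$ does the same. Continuity then yields $F_1(\bar\gamma, L) = \bar\omega$, and since $\bar\omega > 0 = F_1(\bar\gamma, 0)$ we get $L > 0$, so $a(\bar\omega, \bar\gamma)$ exists and $a^* := a(\bar\omega, \bar\gamma) \leq L$. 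For the reverse inequality I argue by contradiction: assuming $a^* < L$, the identity $F_1(\bar\gamma, a^*) = \bar\omega > \omega$ plus continuity in $\gamma$ gives $F_1(\gamma, a^*) > \omega$ for $(\omega, \gamma)$ close enough to $(\bar\omega, \bar\gamma)$ from below, and IVT applied to $F_1(\gamma, \cdot) - \omega$ on $[0, a^*]$ (using $F_1(\gamma, 0) = 0 < \omega$) produces a zero in $(0, a^*)$. Hence $a(\omega, \gamma) < a^*$, contradicting convergence to $L > a^*$.

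For (b), I first exhibit $a(\bar\omega, \bar\gamma)$ by descending from any $(\omega_1, \gamma_1) \in (\bar\omega, \bar\omega + \eps) \times (\bar\gamma, \bar\gamma + \eps)$: Lemma \ref{lem:aomega} gives $a(\bar\omega, \gamma_1)$, then Lemma \ref{lem:agamma} gives $a(\bar\omega, \bar\gamma)$. Joint monotonicity then provides $M := \lim_{(\omega, \gamma) \to (\bar\omega^+, \bar\gamma^+)} a(\omega, \gamma) \geq a^* := a(\bar\omega, \bar\gamma)$ as an infimum, and continuity of $F_1$ applied along any such sequence yields $F_1(\bar\gamma, M) = \bar\omega$. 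A useful fact here is that $F_1$ is strictly decreasing in $\gamma$ at each $s > 0$ (the $\gamma$-term is $-\frac{2}{q+1}s^{(q-1)/2}\gamma$), so $F_1(\gamma, s) < F_1(\bar\gamma, s)$ whenever $\gamma > \bar\gamma$ and $s > 0$.

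For the dichotomy, suppose first that $F_1(\bar\gamma, s) \leq \bar\omega$ on $[a^*, a^* + \delta]$, which is exactly $U(\bar\omega, \bar\gamma, s) \geq 0$ there. Combined with $F_1(\bar\gamma, s) < \bar\omega$ on $(0, a^*)$, we get $F_1(\bar\gamma, s) \leq \bar\omega$ throughout $[0, a^* + \delta]$; the monotonicity of $F_1$ in $\gamma$ then upgrades this to $F_1(\gamma, s) < \bar\omega < \omega$ on $[0, a^* + \delta]$ for every $(\omega, \gamma)$ with $\omega > \bar\omega$, $\gamma > \bar\gamma$, with no closeness required. Thus no zero of $F_1(\gamma, \cdot) - \omega$ lies in $[0, a^* + \delta]$, forcing $a(\omega, \gamma) > a^* + \delta$ and hence $M \geq a^* + \delta > a^*$. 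In the complementary case, for every $\delta > 0$ there is $s_\delta \in (a^*, a^* + \delta)$ with $F_1(\bar\gamma, s_\delta) > \bar\omega$; by continuity of $F_1$ in $\gamma$ and of the inequality in $\omega$, for $(\omega, \gamma)$ close enough to $(\bar\omega^+, \bar\gamma^+)$ we still have $F_1(\gamma, s_\delta) > \omega$, and IVT on $[0, s_\delta]$ yields $a(\omega, \gamma) < s_\delta < a^* + \delta$, so $M \leq a^* + \delta$; letting $\delta \to 0$ gives $M = a^*$. The delicate step I expect to be the main obstacle is the case distinction in (b), specifically choosing the correct test point $s_\delta$ and exploiting the partial monotonicity of $F_1$ in $\gamma$ in the first subcase to dispense with any closeness hypothesis there; the rest is monotone-limit bookkeeping.
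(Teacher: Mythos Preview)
Your proof is correct and follows essentially the same route as the paper: joint monotonicity of $a$ in $(\omega,\gamma)$, continuity of $F_1$ (equivalently of $U$), and the intermediate value theorem, with the identical dichotomy in part (b) based on the sign of $U(\bar\omega,\bar\gamma,\cdot)$ just past $a(\bar\omega,\bar\gamma)$; the paper phrases things in terms of $U$ where you use $F_1$, but the translation $U(s)=s(\omega-F_1(s))$ makes the two arguments line-by-line equivalent.

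One small point worth tightening in part (a): the step ``$F_1(\bar\gamma,a^*)=\bar\omega>\omega$ plus continuity in $\gamma$ gives $F_1(\gamma,a^*)>\omega$ for $(\omega,\gamma)$ close enough'' is not justified by continuity alone, since $F_1(\gamma,a^*)-\omega$ vanishes at $(\bar\omega,\bar\gamma)$. What actually makes it work is the strict monotonicity of $F_1$ in $\gamma$ that you correctly state and use in part (b): for $\gamma<\bar\gamma$ one has $F_1(\gamma,a^*)>F_1(\bar\gamma,a^*)=\bar\omega>\omega$, valid for \emph{all} such $(\omega,\gamma)$, not just nearby ones. The paper avoids this detour entirely by observing directly that monotonicity of $a$ gives $a(\omega,\gamma)\le a(\bar\omega,\bar\gamma)=a^*$ for every $\omega<\bar\omega$, $\gamma<\bar\gamma$, hence $L\le a^*$; your contradiction argument via IVT reaches the same conclusion once the correct monotonicity is invoked.
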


\begin{proof}
(a) Let $\omega_n$ and $\gamma_n$ be such that $\omega_n\nearrow\bar\omega$, $\gamma_n\nearrow\bar\gamma$. Let $b_n = a(\omega_n,\gamma_n)$. 
If $a_3>0$ so that $\lim_{s\to \infty} F_1(\gamma_n,s)=+\infty$, there is an $M>0$ such that $F_1(\gamma_n,s)>F_1(\bar\gamma,s)>\bar\omega>\omega_n$ for all $n\in\N$ and $s>M$.
If $a_3<0$ so that $\lim_{s\to \infty} F_1(\gamma_n,s)=-\infty$, there is an $M>0$ such that $F_1(\gamma_n,s)<F_1(\gamma_1,s)<\omega_1<\omega_n$ for all $n\in\N$ and $s>M$.
In either case, the choice of $M$ is uniform in $n$,
$b_n$ is bounded above by $M$ and increasing, so $b_n$ converges to some $b\leq M$. Since $F_1(\gamma,a)$ is continuous in $\gamma, a$, 
we have $\omega_n = F_1(\gamma_n,b_n)\to F_1(\bar\gamma,b)$ as $n\to\infty$. Hence $F_1(\bar\gamma,b) = \bar\omega$, so $a(\bar\omega,\bar\gamma)$ exists and $a(\bar\omega,\bar\gamma)\leq b$. As $a$ is increasing, $b_n\leq a(\bar\omega,\bar\gamma)$ for all $n\in\N$. Hence $a(\bar\omega,\bar\gamma) = b$. This also shows the limit $b$ is independent of the choice of sequence.
 
(b) Now suppose there is an $\eps>0$ such that $a(\omega,\gamma)$ exists for $\bar\omega<\omega<\bar\omega+\eps$, $\bar\gamma<\gamma<\bar\gamma+\eps$. By Lemma \ref{lem:aomega}, $a(\bar\omega,\bar\gamma)$ exists.

Suppose there is a $\delta>0$ such that $U(\bar\omega,\bar\gamma,s)\geq 0$ for $a(\bar\omega,\bar\gamma)\leq s< a(\bar\omega,\bar\gamma)+\delta$. As $U(\omega,\gamma,s)$ is strictly increasing in $\omega$ and $\gamma$ for all $s>0$, we then have $U(\omega,\gamma,s)>0$ for all $\omega>\bar\omega$, $\gamma>\bar\gamma$, and $0<s<a(\bar\omega,\bar\gamma)+\delta$. Therefore $\lim_{\omega\to \bar\omega^+,\gamma\to\bar\gamma^+}a(\omega,\gamma)\geq a(\bar\omega,\bar\gamma)+\delta$.

Otherwise, for any $b>a(\bar\omega,\bar\gamma)$ there is an $s_0\in (a(\bar\omega,\bar\gamma), b)$ such that $U(\bar\omega,\bar\gamma,b)<0$. For sufficiently large $n$, $U(\omega_n,\gamma_n, b)<0$ and so, by continuity, $U(\omega_n,\gamma_n, s)=0$ for some $0<s<b$. This shows that $a(\bar\omega,\bar\gamma)\leq \lim_{n\to\infty}a(\omega_n,\gamma_n)<b$ for all $b>a(\bar\omega,\bar\gamma)$, and so $\lim_{n\to\infty}a(\omega_n,\gamma_n)=a(\bar\omega,\bar\gamma)$.
\end{proof}

\emph{Remark.}\quad 
It is shown in \cite{LiTsZw21} for the nonlinearity $f(u) =  |u|u -\gamma |u|^2u + |u|^3u$ (FF case) that $a(\omega,\gamma)$ is defined for every $\omega>0$ and $\gamma\in\R$. It is continuous on $\omega,\gamma$ except on the nonexistence curve $\Gamma_{\nex}$. 
 As $(\omega,\gamma)\to (\omega_0,\gamma_0)\in\Gamma_{\nex}$, the value of $a(\omega,\gamma)$
converges to $a(\omega_0,\gamma_0)$ from the left lower side of $\Gamma_{\nex}$, and converges to another value $b\ge a(\omega_0,\gamma_0)$ from the right upper side. 
The limit $b$ agrees with $a(\omega_0,\gamma_0)$ if $(\omega_0,\gamma_0)$ is the endpoint of $\Gamma_{\nex}$, and is strictly larger otherwise.
Lemma \ref{lem:altcts} shows 
this is also true for the FF case of general triple power nonlinearity considered in this paper.

\medskip

Consider
a family of even standing waves $\phi_\omega$, $\omega\in (\omega_0,\omega_1)$, of \eqref{eq:nls} for general $f(u)$. 
We can talk about $\phi_\omega$ because its uniqueness for fixed $\omega$ is given by Lemma \ref{BeLi}.
Iliev-Kirchev \cite{IlKi93} gave a stability criterion in terms the mass functional $Q(\phi_\omega)$, where   
\begin{align*}
Q(u) = \int_\R |u|^2dx.
\end{align*}

\begin{theorem}[Iliev-Kirchev \cite{IlKi93}]\label{thm:stab}
Suppose $f(u)$ is such that \eqref{eq:nls} is locally wellposed in the Sobolev space $H^2(\R)$, and there is a constant $A>0$ such that $U'(s)\in C^0[0,A)\cap C^1(0,A)$, $sU''(s)\to 0$ as $s\to 0$ and the existence condition \eqref{existence} is satisfied with $a<A$. If $\frac{\partial}{\partial\omega}Q(\phi_\omega)>0$, then the standing wave $e^{i\omega t} \phi_\omega(x)$ is stable. If $\frac{\partial}{\partial\omega}Q(\phi_\omega)<0$, then the standing wave $e^{i\omega t} \phi_\omega(x)$ is unstable. Moreover,
\begin{align}
\frac{\partial}{\partial \omega} Q(\phi_\omega) = \frac{-1}{2U'(a)}\int_0^a\left(3+\frac{s(U'(a)-U'(s))}{U(s)}\right)\frac{\sqrt{s}}{\sqrt{U(s)}}\,ds.\label{I-KJ}
\end{align}
\end{theorem}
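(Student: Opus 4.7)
The proof has two separate parts: (i) the stability/instability dichotomy controlled by the sign of $\partial_\omega Q(\phi_\omega)$, and (ii) the explicit formula \eqref{I-KJ}. Part (i) is the classical Grillakis--Shatah--Strauss criterion: the local well-posedness hypothesis in $H^2(\R)$ together with Lemma \ref{BeLi} (uniqueness up to translation of the even positive ground state) supplies the smooth branch and the required spectral structure of the linearized operator, so stability is determined by the sign of $d''(\omega) = \partial_\omega Q(\phi_\omega)$. I would simply invoke this framework and focus the detailed work on deriving \eqref{I-KJ}.

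For the formula, I would first reduce the profile equation to first order. Multiplying $\phi'' = \omega\phi - f(\phi)$ by $\phi'$ and integrating from $x$ to $\infty$, with the decay $\phi,\phi'\to 0$, yields the conservation law $(\phi')^2 = 2G(\phi) = U(\phi^2)$. By Lemma \ref{BeLi}, $\phi$ is even, positive, maximal at $0$ with $\phi(0) = \sqrt{a}$, and strictly decreasing on $(0,\infty)$. Using $\phi$ as coordinate for $x>0$ and then substituting $s = \phi^2$,
$$
Q(\phi_\omega) = 2\int_0^\infty \phi^2\,dx = 2\int_0^{\sqrt{a}} \frac{\phi^2}{\sqrt{U(\phi^2)}}\,d\phi = \int_0^a \frac{\sqrt{s}}{\sqrt{U(s)}}\,ds.
$$

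To differentiate in $\omega$ without producing a singular boundary contribution at $s=a$ (where $U(a)=0$), I would rescale via $s = at$ to obtain $Q(\phi_\omega) = a^{3/2}\int_0^1 \sqrt{t}/\sqrt{U(at)}\,dt$ with fixed domain. Differentiating under the integral sign and using $\partial_\omega U(\omega,at) = at + U'(at)\,t\,\partial_\omega a$ yields two terms. The implicit derivative $\partial_\omega a$ is extracted from the defining relation $U(a) = 0$, which upon differentiating in $\omega$ gives $a + U'(a)\,\partial_\omega a = 0$, i.e.\ $\partial_\omega a = -a/U'(a)$. Undoing the rescaling back to $s$, the cross term simplifies through $a + U'(s)\,\partial_\omega a = a(U'(a) - U'(s))/U'(a)$, and combining the two integrals produces exactly the bracketed expression in \eqref{I-KJ}.

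The main technical obstacle is justifying the interchange of differentiation and integration, since $1/\sqrt{U(s)}$ is singular at both endpoints. Near $s=0$ the hypothesis $sU''(s)\to 0$ together with $U(s)=\omega s + o(s)$ gives a bounded integrand; near $s=a$ the strict inequality $U'(a)<0$ (forced by \eqref{existence} and Lemma \ref{BeLi}) yields $U(s)\sim |U'(a)|(a-s)$ and hence an integrable $(a-s)^{-1/2}$ singularity, uniformly in a neighbourhood of $\omega$. The analogous $(a-s)^{-1/2}$ bound on the derivative integrand comes from $U'(a)-U'(s) = O(a-s)$, which is why the rescaling is essential. Once these dominated-convergence bounds are secured the remainder is algebraic bookkeeping.
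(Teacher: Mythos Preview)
The paper does not supply its own proof of this theorem: it is quoted from Iliev--Kirchev \cite{IlKi93}, with the remark that the displayed formula is \cite[(2.11)]{LiTsZw21} and equivalent to \cite[Lemma~6]{IlKi93}. So there is no in-paper argument to compare against.

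That said, your sketch is correct and is in fact the standard route to \eqref{I-KJ}. Your reduction $Q(\phi_\omega)=\int_0^a \sqrt{s}/\sqrt{U(s)}\,ds$ is right (the factor $2$ from evenness cancels the Jacobian $ds=2\phi\,d\phi$), and the rescaling $s=at$ is exactly the device that makes the $\omega$-derivative legitimate: the crucial point, which you identify, is that $\partial_\omega[U(at)]\big|_{t=1}=a+U'(a)\,\partial_\omega a=0$ after using $\partial_\omega a=-a/U'(a)$, so the would-be $(1-t)^{-3/2}$ singularity is tamed to $(1-t)^{-1/2}$. The subsequent algebra you describe does collapse to \eqref{I-KJ}. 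For part~(i), invoking the Grillakis--Shatah--Strauss framework (with the 1D spectral input that the linearized operator $L_+=-\partial_x^2+\omega-f'(\phi_\omega)$ has exactly one negative eigenvalue and kernel spanned by $\phi_\omega'$, which follows from Sturm--Liouville and the strict monotonicity in Lemma~\ref{BeLi}) is precisely what Iliev--Kirchev do; you might mention this explicitly, but the outline is sound.
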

The above formula is \cite[(2.11)]{LiTsZw21} and is equivalent to that in \cite[Lemma 6]{IlKi93}.

For convenience, we define
\begin{align}
J(\omega,\gamma) = \frac{\partial}{\partial \omega} Q(\phi_\omega).
\end{align}
In our case, $U(s)/s = \omega - \frac{2a_1}{p+1}s^\frac{p-1}{2} + \frac{2\gamma}{q+1}s^\frac{q-1}{2} - \frac{2a_3}{r+1}s^\frac{r-1}{2}$, and subtracting $U(a)/a=0$ gives
\begin{align*}
\frac{U(s)}{s} = \frac{2a_1}{p+1}(a^\frac{p-1}{2}-s^\frac{p-1}{2}) - \frac{2\gamma}{q+1}(a^\frac{q-1}{2}-s^\frac{q-1}{2}) + \frac{2a_3}{r+1}(a^\frac{r-1}{2}-s^\frac{r-1}{2}).
\end{align*}
We also have
\begin{align*}
U'(a)-U'(s)=- a_1(a^\frac{p-1}{2}-s^\frac{p-1}{2})+ \gamma (a^\frac{q-1}{2}-s^\frac{q-1}{2})- a_3(a^\frac{r-1}{2}-s^\frac{r-1}{2}).
\end{align*}
Thus (\ref{I-KJ}) becomes
\begin{align*}
\frac{-1}{2U'(a)}\int_0^a\frac{\frac{a_1(5-p)}{p+1}(a^\frac{p-1}{2}-s^\frac{p-1}{2}) - \frac{\gamma(5-q)}{q+1}(a^\frac{q-1}{2}-s^\frac{q-1}{2}) + \frac{a_3(5-r)}{r+1}(a^\frac{r-1}{2}-s^\frac{r-1}{2})}{ \left(\frac{2a_1}{p+1}(a^\frac{p-1}{2}-s^\frac{p-1}{2}) - \frac{2\gamma}{q+1}(a^\frac{q-1}{2}-s^\frac{q-1}{2}) + \frac{2a_3}{r+1}(a^\frac{r-1}{2}-s^\frac{r-1}{2})\right)^{3/2}}\,ds.
\end{align*}
Note that the denominator of the integrand is $(U(s)/s)^{3/2}$, and is therefore positive for $s\in [0,a)$. We now use a change of variables to integrate over a constant interval, and get the following lemma.
\begin{lemma}\label{lem:pqrstab}
For the particular choice $f(x) = a_1x|x|^{p-1} - \gamma x|x|^{q-1} + a_3 x|x|^{r-1}$, we have
\begin{align}
J(\omega, \gamma)
 &=C \int_0^1 \frac{\frac{a_1(5-p)}{p+1}(1-s^\frac{p-1}{2})a^\frac{p-1}{2}-\frac{\gamma(5-q)}{q+1}(1-s^\frac{q-1}{2})a^\frac{q-1}{2}+\frac{a_3(5-r)}{r+1}(1-s^\frac{r-1}{2})a^\frac{r-1}{2}}{\left[\frac{a_1}{p+1}(1-s^\frac{p-1}{2})a^\frac{p-1}{2}-\frac{\gamma}{q+1}(1-s^\frac{q-1}{2})a^\frac{q-1}{2}+\frac{a_3}{r+1}(1-s^\frac{r-1}{2})a^\frac{r-1}{2}\right]^\frac{3}{2}}ds\nonumber\\
&=C \int_0^1 \frac{a_1(5-p)A_p(a,s)-\gamma(5-q)A_q(a,s)+a_3(5-r)A_r(a,s)}{\left[a_1A_p(a,s)-\gamma A_q(a,s)+a_3A_r(a,s)\right]^\frac{3}{2}}ds,\label{pqrstab}
\end{align}
where $C=C(\omega,\gamma) = \frac{-a}{4\sqrt{2}U'(a)}$, $A_l(a,s) = \frac{1-s^\frac{l-1}{2}}{l+1}a^\frac{l-1}{2}$ for $l=p,q,r$, and the denominator is positive for $s\in [0,1)$.
\end{lemma}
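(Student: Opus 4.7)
The plan is to perform the linear rescaling $s \mapsto as$ in the explicit integral for $J(\omega,\gamma)$ displayed immediately before the lemma statement (obtained by substituting the triple-power form of $U(s)$ into the Iliev--Kirchev formula \eqref{I-KJ} of Theorem \ref{thm:stab}), which pulls the integration interval $[0,a]$ onto the fixed interval $[0,1]$ with Jacobian $a$.

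Under this substitution each building block $a^{(l-1)/2} - s^{(l-1)/2}$ appearing in both the numerator and the denominator factors as $a^{(l-1)/2}\bigl(1 - s^{(l-1)/2}\bigr)$, which is exactly $(l+1)A_l(a,s)$ by the definition of $A_l$. Substituting into the numerator, the coefficients $\frac{a_1(5-p)}{p+1}$, $\frac{-\gamma(5-q)}{q+1}$, $\frac{a_3(5-r)}{r+1}$ lose their denominators $l+1$ against the definition of $A_l$ and collapse into $a_1(5-p)A_p - \gamma(5-q)A_q + a_3(5-r)A_r$, reproducing the numerator of \eqref{pqrstab}.

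For the denominator, the same substitution yields $2\bigl[a_1 A_p - \gamma A_q + a_3 A_r\bigr]$ inside the $3/2$-power, so exponentiation produces an extra multiplicative factor of $2^{3/2}$. Combining this with the Jacobian $a$ and the original prefactor $-1/(2U'(a))$ gives the overall constant $-a/(4\sqrt{2}\,U'(a)) = C$, as asserted.

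Finally, positivity of the bracketed expression on $s \in [0,1)$ follows because, in the original variable, the denominator equals $(U(s)/s)^{3/2}$, and $U(s)/s = \omega - F_1(\gamma,s) > 0$ for $s \in [0,a)$ by the definition of $a$ as the smallest positive zero of $\omega - F_1(\gamma,\cdot)$. The derivation is entirely routine; the only bookkeeping step worth flagging is the factor $2^{3/2}$ that arises when pulling the $2$ out of the bracket before raising to the $3/2$ power.
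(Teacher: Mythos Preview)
Your proof is correct and matches the paper's approach exactly: the paper simply states ``we now use a change of variables to integrate over a constant interval,'' and you have spelled out precisely that substitution $s\mapsto as$, together with the bookkeeping for the factor $2^{3/2}$ and the positivity of the denominator via $U(s)/s>0$ on $[0,a)$.
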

We also write
\begin{equation}\label{ND.def}
\begin{split}
    N(a,s) & = a_1(5-p)A_p(a,s)-\gamma(5-q)A_q(a,s)+a_3(5-r)A_r(a,s),
\\[2pt]
    D(a,s) &= a_1A_p(a,s)-\gamma A_q(a,s)+a_3A_r(a,s).
\end{split}    
\end{equation}
Since $D(a,s)>0$ for all $s\in [0,1)$, we can show that $J(\omega,\gamma)>0$ by approximating $N(a,s)$ well enough to show that $N(a,s)>0$ for all $s\in (0,1)$.  For the purpose of approximations, it is useful to note that $A_l(a,s)/(1-s)$ and $(1-s)/A_l(a,s)$ are both $L^\infty([0,1])$ as functions of $s$. This is implied by the following lemma,  which is also used in the proof of Propositions \ref{prop:DFJ0pos} and \ref{prop:DFJ0neg}.
\begin{lemma}\label{lem:lineq}
Let $h(x)=\frac{x^{p_1}-x^{q_1}}{x^{p_2}-x^{q_2}}$ for some $q_1>p_1\ge 0$ and $q_2>p_2\ge 0$. If $p_1\ge p_2$ and $q_1>q_2$, then $h'(x)>0$ for all $x\in (0,1)$ and $h(x)\leq \frac{q_1-p_1}{q_2-p_2}$. If $p_1\le p_2$ and $q_1<q_2$, then $h'(x)<0$ for all $x\in(0,1)$ and $h(x)\geq \frac{q_1-p_1}{q_2-p_2}$. 
\end{lemma}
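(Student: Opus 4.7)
My plan is to use the integral identity
\begin{equation*}
x^p - x^q = (-\log x)\int_p^q x^s\, ds, \qquad x\in(0,1),\ 0\le p<q,
\end{equation*}
which comes from integrating $\frac{d}{ds}x^s = (\log x)x^s$. The factor $-\log x$ cancels in the quotient, giving the clean representation
\begin{equation*}
h(x) = \frac{\int_{p_1}^{q_1} x^s\, ds}{\int_{p_2}^{q_2} x^s\, ds}, \qquad x\in(0,1).
\end{equation*}
Since the integrands are positive and tend to $1$ as $x\to 1^-$, this immediately gives $\lim_{x\to 1^-} h(x) = (q_1-p_1)/(q_2-p_2)$ (the same follows from L'H\^opital's rule on the original quotient). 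The asserted bounds then reduce to establishing the claimed monotonicity.

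For monotonicity, logarithmic differentiation yields
\begin{equation*}
x\,\frac{h'(x)}{h(x)} = \bar s_1(x) - \bar s_2(x), \qquad \bar s_i(x) := \frac{\int_{I_i} s\,x^s\,ds}{\int_{I_i} x^s\,ds},\ I_i=[p_i,q_i],
\end{equation*}
so $\bar s_i(x)$ is the expectation of $s$ under the probability measure proportional to $x^s\,ds$ on $I_i$. The first case ($p_1 \ge p_2$, $q_1 > q_2$) then reduces to the stochastic-dominance statement $\bar s_1(x) > \bar s_2(x)$, i.e., that shifting and/or extending the interval of integration to the right (under a positive weight) strictly increases the mean of $s$. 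I would prove this by a monotone interpolation: parametrize $p(\lambda)=p_2+\lambda(p_1-p_2)$, $q(\lambda)=q_2+\lambda(q_1-q_2)$ for $\lambda\in[0,1]$, let $m(\lambda)$ be the corresponding mean, and check via the quotient rule that $m'(\lambda)$ is a nonnegative combination of $(q_1-q_2)[q(\lambda)-m(\lambda)]$ and $(p_1-p_2)[m(\lambda)-p(\lambda)]$, the first term being strictly positive since $q_1>q_2$ and a mean lies strictly between the endpoints of its support. Hence $\bar s_1(x)=m(1)>m(0)=\bar s_2(x)$, so $h'(x)>0$ on $(0,1)$, and combining with the limit at $1^-$ gives $h(x)<(q_1-p_1)/(q_2-p_2)$.

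The second case ($p_1\le p_2$, $q_1<q_2$) is handled by applying the first case to $1/h$ with the roles of the two indices swapped, which reverses both the monotonicity and the inequality. The main structural move, and the one that makes everything routine, is recognizing the integral representation of $x^p - x^q$; after that the limit and monotonicity are short standard arguments and I anticipate no real obstacle. The only mild care needed is in the bookkeeping of strict versus non-strict inequalities: equality $p_1=p_2$ is allowed because one of the two nonnegative contributions in $m'(\lambda)$ then vanishes while the other, guaranteed by the strict hypothesis $q_1>q_2$, survives and keeps $m'(\lambda)>0$.
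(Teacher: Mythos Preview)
Your argument is correct and takes a genuinely different route from the paper. The paper computes $h'(x)$ directly: the numerator of $h'$ is a four-term generalized polynomial in $x$, and positivity is shown via convexity of $s\mapsto x^s$ applied through explicit Jensen-type inequalities with carefully chosen convex weights; this forces a case split according to whether $q_2\ge p_1$ or $q_2<p_1$, and the limit $(q_1-p_1)/(q_2-p_2)$ is obtained separately by L'H\^opital. Your approach instead rewrites $h$ via the integral identity $x^{p}-x^{q}=(-\log x)\int_p^q x^s\,ds$, interprets $x\,h'/h$ as a difference of weighted means $\bar s_1-\bar s_2$ of $s$ over the two intervals, and then slides one interval to the other along a linear homotopy to see the mean strictly increase. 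This is more conceptual, requires no case distinction, and makes both the limiting value and the role of the strict hypothesis $q_1>q_2$ transparent (it is exactly what keeps $m'(\lambda)>0$ when $p_1=p_2$). The paper's proof, by contrast, is entirely elementary (no integral representation or probabilistic reading needed) and stays closer to the explicit form of $h$; either argument is short, but yours generalizes more readily to other comparison statements of the same type.
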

\begin{proof}
    Suppose $p_1 > p_2$ and $q_1>q_2$. We have
\begin{align}
        h'(x) &= \frac{(p_1-p_2)x^{p_1+p_2}+(q_2-p_1)x^{p_1+q_2}+(p_2-q_1)x^{p_2+q_1}+(q_1-q_2)x^{q_1+q_2}}{x(x^{p_2}-x^{q_2})^2}.\label{h'}
    \end{align}

    If $q_2\ge p_1$, then
    \begin{align*}
        \lambda_1(p_1+p_2) + \lambda_2(p_1+q_2)+\lambda_3(q_1+q_2)=p_2+q_1,
    \end{align*}
    where
    \begin{align*}
        \lambda_1 = \frac{p_1-p_2}{q_1-p_2},\quad \lambda_2 = \frac{q_2-p_1}{q_1-p_2},\quad \lambda_3 = \frac{q_1-q_2}{q_1-p_2},\quad \lambda_1+\lambda_2+\lambda_3 = 1.
    \end{align*}
    Thus, by convexity of $s\mapsto x^s$,
    \begin{align}\label{3.9}
        \lambda_1 x^{p_1+p_2}+\lambda_2 x^{p_1+q_2}+\lambda_3x^{q_1+q_2}> x^{p_2+q_1}.
    \end{align}
    Multiplying by $q_1-p_2$ shows that the numerator of (\ref{h'}) is positive.

The above argument remains correct if $p_2=p_1<q_2$, and the inequality in \eqref{3.9} stays strict. The case $p_2=p_1=q_2$ is excluded by $p_2<q_2$.

    Now suppose $q_2<p_1$. For 
    \begin{align*}
        \lambda_1 = \frac{p_1-p_2}{p_1-p_2+q_1-q_2},\quad \lambda_2 = \frac{q_1-q_2}{p_1-p_2+q_1-q_2},
    \end{align*}
    \begin{align*}
             \lambda_3 = \frac{p_1-q_2}{p_1-p_2+q_1-q_2},\quad \lambda_4 = \frac{q_1-p_2}{p_1-p_2+q_1-q_2},
    \end{align*}
    we have $\lambda_1+\lambda_2=\lambda_3+\lambda_4 = 1$, and
    \begin{align*}
    \lambda_1(p_1+p_2)+\lambda_2(q_1+q_2) = \lambda_3(p_1+q_2) + \lambda_4 (p_2+q_1).
    \end{align*}
    By convexity of $s\mapsto x^s$ and that 
    $p_1+q_2$ and $p_2+q_1$ are between $p_1+p_2$ and $q_1+q_2$, it follows that
    \begin{align}
        \lambda_1 x^{p_1+p_2} + \lambda_2 x^{q_1+q_2}>\lambda_3 x^{p_1+q_2} + \lambda_4 x^{q_1+p_2}.
    \end{align}
    Multiplying by $p_1-p_2+q_1-q_2$ shows that the numerator of (\ref{h'}) is positive.
    
By L'Hopital's rule, $\lim_{x\to 1} h(x)=\frac{q_1-p_1}{q_2-p_2}$, so $h(x)\leq \frac{q_1-p_1}{q_2-p_2}$ for $x\in (0,1]$.

Since $h(x)>0$ for $x\in (0,1)$, the case for $p_1<p_2$ and $q_1<q_2$ follows by taking inverses.
\end{proof}

Following Kfoury-Le Coz-Tsai in \cite{MR4480890}, we use the beta function to calculate the limits of $J(\omega,\gamma)$ as $\omega\to0^+$ in the D* cases. Recall that the beta function is defined for $x,y>0$ by
\begin{align*}
B(x,y)=\int_0^1t^{x-1}(1-t)^{y-1}dt.
\end{align*}
The beta function is related to the gamma function by 
\begin{align*}
B(x,y)=\frac{\Gamma(x)\Gamma(y)}{\Gamma(x+y)}.
\end{align*}
We also define the function $H$ for $x,y>0$ by
\begin{align*}
H(x,y)=\int_0^1\frac{t^{x-1}(1-t^y)}{(1-t)^\frac{3}{2}}dt.
\end{align*}
The following lemma is \cite[Lemma 9]{MR4480890} and describes the relation between the functions $H$ and $B$.
\begin{lemma}[Kfoury-Le Coz-Tsai \cite{MR4480890}]\label{lem:BvH}
For $x,y>0$, we have
\begin{align*}
H(x,y) = -(2x-1)B(x,1/2)+(2x+2y-1)B(x+y,1/2).
\end{align*}
\end{lemma}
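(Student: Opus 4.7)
The plan is to establish the identity first on the range $x > 1$ by a single integration by parts, then extend to $0 < x \le 1$ by analytic continuation. For $x > 1$, apply IBP to
\[H(x,y) = \int_0^1 t^{x-1}(1-t^y)(1-t)^{-3/2}\,dt\]
with $u(t) = t^{x-1}(1-t^y)$ and $dv = (1-t)^{-3/2}\,dt$, so that $v(t) = 2(1-t)^{-1/2}$. The boundary terms $[uv]_0^1$ vanish: at $t = 1$ since $1-t^y \sim y(1-t)$ gives $uv \sim 2y(1-t)^{1/2}$, and at $t = 0$ since $x > 1$ forces $u(0) = 0$ while $v$ is bounded. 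Computing $u'(t) = (x-1)t^{x-2}(1-t^y) - y t^{x+y-2}$ yields
\[H(x,y) = -2(x-1)\int_0^1 \frac{t^{x-2}(1-t^y)}{(1-t)^{1/2}}\,dt + 2y\int_0^1 \frac{t^{x+y-2}}{(1-t)^{1/2}}\,dt.\]
Since $x > 1$, each piece converges and splits into beta integrals, and after collecting terms one arrives at
\[H(x,y) = -2(x-1)B(x-1,1/2) + (2x+2y-2)B(x+y-1,1/2).\]

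The next step is the recurrence $(2s-1)B(s,1/2) = 2(s-1)B(s-1,1/2)$, which follows immediately from $\Gamma(s) = (s-1)\Gamma(s-1)$ and $\Gamma(s+1/2) = (s-1/2)\Gamma(s-1/2)$. Applying it with $s = x$ and $s = x+y$ converts both terms, giving exactly $H(x,y) = -(2x-1)B(x,1/2) + (2x+2y-1)B(x+y,1/2)$ on $x > 1$. To push this to $0 < x \le 1$, I would invoke analyticity: the right-hand side is holomorphic in $x$ on $\{\operatorname{Re} x > 0\}$ because $B(x,1/2) = \Gamma(x)\Gamma(1/2)/\Gamma(x+1/2)$ is, and the left-hand side is holomorphic on the same region by differentiation under the integral sign (the integrand is uniformly dominated on compact subsets using $t^{\operatorname{Re} x - 1}$ near $t = 0$ together with $(1-t^y)(1-t)^{-3/2} \le C(1-t)^{-1/2}$ near $t = 1$). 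Agreement on the open set $\{x > 1\}$ then forces agreement on all of $\{x > 0\}$ by the identity theorem.

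The main obstacle is the range $0 < x \le 1$, where the IBP argument formally breaks down: the two split integrals $\int_0^1 t^{x-2}(1-t)^{-1/2}\,dt$ and $\int_0^1 t^{x+y-2}(1-t)^{-1/2}\,dt$ individually diverge at $t = 0$ even though their combination in $H$ is finite. Analytic continuation sidesteps this cleanly, but an alternative that avoids it entirely is to use the shifted antiderivative $v(t) = 2[(1-t)^{-1/2} - 1]$, for which $v(0) = 0$; this kills the boundary term at $t = 0$ for every $x > 0$, and the elementary correction integrals $\int_0^1 t^{s-1}\,dt = 1/s$ that appear combine with the would-be divergent beta pieces so that the $1/(x-1)$ and $1/(x+y-1)$ singularities cancel algebraically, delivering the same identity in one step.
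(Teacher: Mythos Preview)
Your argument is correct. The integration-by-parts computation for $x>1$ is clean, the beta-function recurrence $(2s-1)B(s,1/2)=2(s-1)B(s-1,1/2)$ is exactly what is needed to shift the arguments, and the analytic-continuation step to cover $0<x\le1$ is sound (the shifted-antiderivative alternative you sketch also works and is a nice way to keep everything real-analytic).

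As for comparison: the paper does not give its own proof of this lemma. It is simply quoted as \cite[Lemma~9]{MR4480890} and used as a black box in the subsequent Lemma~\ref{lem:2pow} and in the DF/DD limit computations. So your write-up supplies strictly more detail than the paper does; there is nothing in the paper to differ from.
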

Using this we can calculate the following integral. It is \cite[Lemma 10]{MR4480890} except the explicit constant and change of variables. 
We skip its calculation.
\begin{lemma}\label{lem:2pow}
For any $1<p<q$ with $p<\frac{7}{3}$, we have
\begin{align*}
\int_0^1\frac{-(5-p)(1-s^\frac{p-1}{2})+(5-q)(1-s^\frac{q-1}{2})}{(s^\frac{p-1}{2}-s^\frac{q-1}{2})^\frac{3}{2}}ds=2\frac{7-2p-q}{q-p}B\left(\frac{7-3p}{2(q-p)},\frac{1}{2}\right).
\end{align*}
\end{lemma}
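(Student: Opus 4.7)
The plan is to reduce the integral to a linear combination of values of the auxiliary function $H(x,y)$ introduced just before Lemma \ref{lem:BvH}, apply that lemma to convert $H$ to beta values, and finally observe via the standard beta recursion that the two ``extra'' beta terms cancel, leaving exactly the single $B(x,1/2)$ contribution on the right.

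First I would factor the denominator as $(s^{(p-1)/2}-s^{(q-1)/2})^{3/2}=s^{3(p-1)/4}(1-s^{(q-p)/2})^{3/2}$ and substitute $t=s^{(q-p)/2}$, so that $ds=\tfrac{2}{q-p}\,t^{2/(q-p)-1}\,dt$. Introducing the shorthands $x=\tfrac{7-3p}{2(q-p)}$ (positive because of the hypothesis $p<\tfrac{7}{3}$), $\alpha=\tfrac{p-1}{q-p}$, and $\beta=\tfrac{q-1}{q-p}=\alpha+1$, the exponents collapse so that $s^{(p-1)/2}=t^\alpha$, $s^{(q-1)/2}=t^{\beta}$, and the integral rewrites as
$$\frac{2}{q-p}\bigl[-(5-p)\,H(x,\alpha)+(5-q)\,H(x,\beta)\bigr].$$

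Next I would apply Lemma \ref{lem:BvH} to each of $H(x,\alpha)$ and $H(x,\beta)$. Elementary arithmetic gives $2x-1=\tfrac{7-2p-q}{q-p}$, $2(x+\alpha)-1=\tfrac{5-q}{q-p}$, and $2(x+\beta)-1=\tfrac{5+q-2p}{q-p}$. Collecting the $B(x,1/2)$ contributions produces the coefficient $\tfrac{2}{q-p}\cdot\tfrac{7-2p-q}{q-p}\cdot\bigl((5-p)-(5-q)\bigr)=\tfrac{2(7-2p-q)}{q-p}$, which is precisely the coefficient in the claimed identity.

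What remains is to show that the leftover contributions proportional to $B(x+\alpha,\tfrac12)$ and $B(x+\beta,\tfrac12)$ cancel. Since $\beta=\alpha+1$, I would use the standard recursion $B(u+1,\tfrac12)=\tfrac{u}{u+1/2}B(u,\tfrac12)$ at $u=x+\alpha=\tfrac{5-p}{2(q-p)}$, where $u+\tfrac12=\tfrac{5+q-2p}{2(q-p)}$. This gives $B(x+\beta,\tfrac12)=\tfrac{5-p}{5+q-2p}B(x+\alpha,\tfrac12)$, and a short algebraic check shows the two leftover terms, with coefficients $-(5-p)(5-q)$ and $(5-q)(5+q-2p)$ in front of $B(x+\alpha,\tfrac12)$ after substitution, cancel exactly. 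The only real ``obstacle'' is bookkeeping in this last cancellation; there is no analytic subtlety, since $p<\tfrac{7}{3}$ ensures all of $x$, $x+\alpha$, $x+\beta$ are positive so every beta value is finite and the substitution is a diffeomorphism of $(0,1)$.
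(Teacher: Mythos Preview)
Your proposal is correct and follows exactly the approach the paper sets up: reduce the integral via the substitution $t=s^{(q-p)/2}$ to a combination of $H$-values, convert those with Lemma~\ref{lem:BvH}, and use the beta recursion to see the cross terms cancel. The paper itself skips this computation entirely, simply noting the result is the same as \cite[Lemma 10]{MR4480890} up to constants and a change of variables, so your write-up in fact supplies the omitted details.
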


The following lemma, which is well-known for integer powers as Descartes' rule of signs, is given for real powers in Haukkanen-Tossavainen \cite[Theorem 2.2]{MR2831628}.  
\begin{lemma}\label{lem:RoS}
Let $p_1<p_2<\dots<p_n\in \R$ for some $n\in\N$, and let $c_1,c_2,\dots,c_n\in\R\setminus\{0\}$. Define $f:[0,\infty)\to \R$ by
\[
f(s)=\sum_{i=1}^n c_is^{p_i}.
\]
Then the number of positive real zeros of $f$ is at most $|\{i:c_ic_{i+1}<0\}|$, the number of sign changes of the coefficients $c_i$.
\end{lemma}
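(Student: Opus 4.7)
The plan is to prove the slightly stronger statement that the number $N$ of positive zeros of $f$ counted with multiplicity satisfies $N\le v := |\{i:c_ic_{i+1}<0\}|$, which immediately implies the stated bound. I would follow the classical template for Descartes' rule of signs, adapted to real exponents: reduce by a change of function to a constant leading term, induct on $n$ via Rolle's theorem, and then, in the case where the leading two coefficients agree in sign, close the remaining gap of one by a parity argument.

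Since $s^{p_1}>0$ for $s>0$, replacing $f$ by
\[
h(s) := s^{-p_1}f(s) = c_1 + \sum_{i=2}^n c_i s^{p_i-p_1}
\]
preserves all positive zeros and their multiplicities. The base case $n=1$ is immediate since $h\equiv c_1\ne 0$ has no zeros and $v=0$. For the inductive step, differentiating gives $h'(s) = \sum_{i=2}^n c_i(p_i-p_1)\,s^{p_i-p_1-1}$, again a sum of $n-1$ real-power monomials. Because $p_i-p_1>0$ for $i\ge 2$, the coefficients $c_i(p_i-p_1)$ have the same signs as $c_i$, so they undergo $v'$ sign changes, with $v'=v$ or $v'=v-1$ according as $\sign(c_1)=\sign(c_2)$ or not. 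Rolle's theorem applied with multiplicities (each zero of $h$ of multiplicity $k$ is a zero of $h'$ of multiplicity $k-1$, and between any two consecutive distinct zeros of $h$ there lies at least one further zero of $h'$) gives $N\le N'+1$, where $N'$ is the analogous count for $h'$. The inductive hypothesis yields $N'\le v'$, so $N\le v'+1$. When $\sign(c_1)\ne\sign(c_2)$ this already gives $N\le v$ and we are done.

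The main obstacle is the remaining case $\sign(c_1)=\sign(c_2)$, where the induction only yields $N\le v+1$ and a parity argument is needed to remove the extra $+1$. As $s\to 0^+$ one has $h(s)\to c_1$, while as $s\to\infty$ the dominant term $c_n s^{p_n-p_1}$ forces $h(s)$ to take the sign of $c_n$. Hence the total number of sign changes of $h$ on $(0,\infty)$, which equals the number of odd-multiplicity positive zeros of $h$, has the same parity as $v$, since $v$ is even precisely when $\sign(c_1)=\sign(c_n)$. Even-multiplicity zeros contribute an even amount to $N$, so $N\equiv v\pmod{2}$; combined with $N\le v+1$ this forces $N\le v$, completing the induction.
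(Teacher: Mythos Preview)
The paper does not actually prove this lemma; it merely cites Haukkanen--Tossavainen \cite{MR2831628}, Theorem~2.2, for the real-exponent version of Descartes' rule of signs. Your proof is correct and follows the classical template: divide out the lowest power to get a constant term, differentiate, and induct via Rolle's theorem. The only point worth noting is that the Rolle/multiplicity bookkeeping implicitly uses that $h$ is real-analytic on $(0,\infty)$ (so zeros are isolated and multiplicities behave as expected) and that $h(s)\to c_1\ne 0$ as $s\to 0^+$ and $|h(s)|\to\infty$ as $s\to\infty$ (so the zero set is finite); both are clear here. Your parity argument to close the gap when $\sign(c_1)=\sign(c_2)$ is standard and correct, and in fact yields the slightly stronger conclusion (zeros counted with multiplicity) than what the paper states or uses.
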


\section{Existence for triple power nonlinearities}\label{S4}
In this section we study 
the set $R_{\ex}$ of $(\omega,\gamma)$ for which a standing wave solution exists, and its boundary $\Gamma_{\nex}$, for each of the 4 cases FF, FD, DF, and DD.

\begin{lemma}\label{lem:preEx1}
In any case of FF, FD, DF, and FD, $R_{\ex}$ is open, and
$\Gamma_{\nex}$ is the set of $(\omega,\gamma)$ such that $a(\omega,\gamma)$ exists and $U'(a(\omega,\gamma))=0$.
\end{lemma}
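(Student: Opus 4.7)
The plan is to treat openness by an implicit-function-theorem argument on $F_1(\gamma,s)=\omega$, and then characterize $\Gamma_{\nex}$ by a double inclusion, leveraging monotonicity of $a$ from Lemmas~\ref{lem:aomega}--\ref{lem:agamma} and the one-sided continuity in Lemma~\ref{lem:altcts}(a).

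For openness, fix $(\bar\omega,\bar\gamma)\in R_{\ex}$ and set $\bar a=a(\bar\omega,\bar\gamma)$. Writing $U(s)=s(\omega-F_1(\gamma,s))$ and differentiating gives $U'(a)=-a\,\partial_sF_1(\gamma,a)$, so $U'(\bar a)<0$ is equivalent to $\partial_sF_1(\bar\gamma,\bar a)>0$; the implicit function theorem then produces a smooth solution $s(\omega,\gamma)$ of $F_1(\gamma,s)=\omega$ near $(\bar\omega,\bar\gamma)$, with $s(\bar\omega,\bar\gamma)=\bar a$ and $\partial_sF_1>0$ throughout. To identify $s(\omega,\gamma)$ with the infimum $a(\omega,\gamma)$, I would use the strict inequality $F_1(\bar\gamma,\cdot)<\bar\omega$ on $[0,\bar a)$ together with uniform continuity to rule out other zeros on $[0,\bar a-\eta]$ for $(\omega,\gamma)$ near $(\bar\omega,\bar\gamma)$, and strict monotonicity of $F_1(\gamma,\cdot)$ to rule them out on $(\bar a-\eta,\bar a+\eta)$. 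Continuity of $\partial_sF_1$ then ensures $U'(a(\omega,\gamma))<0$, so $(\omega,\gamma)\in R_{\ex}$.

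Let $\Sigma:=\{(\omega,\gamma)\in\Omega:a(\omega,\gamma)\text{ exists and }U'(a)=0\}$. For $\Gamma_{\nex}\subseteq\Sigma$, take $(\bar\omega,\bar\gamma)\in\Gamma_{\nex}$ and a sequence $(\omega_n,\gamma_n)\in R_{\ex}$ converging to it. Fixing any $n$ with $\omega_n>\bar\omega-1$ and $\gamma_n>\bar\gamma-1$, the monotonicity Lemmas~\ref{lem:aomega}--\ref{lem:agamma} yield existence of $a(\omega,\gamma)$ for all $\omega<\omega_n$, $\gamma<\gamma_n$; in particular $a(\bar\omega-1/k,\bar\gamma-1/k)$ exists for large $k$, so Lemma~\ref{lem:altcts}(a) gives existence of $\bar a=a(\bar\omega,\bar\gamma)$. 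Since $(\bar\omega,\bar\gamma)\notin R_{\ex}$ (by openness) we have $U'(\bar a)\ge 0$; on the other hand $U\ge 0$ on $[0,\bar a]$ with $U(\bar a)=0$ forces $U'(\bar a)\le 0$. Hence $U'(\bar a)=0$ and $(\bar\omega,\bar\gamma)\in\Sigma$.

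For the converse $\Sigma\subseteq\Gamma_{\nex}$, let $(\bar\omega,\bar\gamma)\in\Sigma$; clearly $(\bar\omega,\bar\gamma)\notin R_{\ex}$, so it suffices to exhibit $R_{\ex}$-points nearby. For $\omega<\bar\omega$ the downward shift $U(\omega,\bar\gamma,s)=U(\bar\omega,\bar\gamma,s)-(\bar\omega-\omega)s$ is strictly negative at $s=\bar a$ and positive for small $s>0$ (since $U/s\to\omega>0$ as $s\to 0^+$), so a smallest positive zero $a(\omega,\bar\gamma)\in(0,\bar a)$ exists. Because $U(\bar\omega,\bar\gamma,\cdot)$ is real-analytic on $(0,\infty)$ and nonnegative on $[0,\bar a]$, it vanishes at $\bar a$ to some finite order $m\ge 2$, with Taylor expansion $U(\bar\omega,\bar\gamma,s)=c(s-\bar a)^m+O(|s-\bar a|^{m+1})$ where the sign of $c$ is pinned by $U\ge 0$ just below $\bar a$. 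Solving the perturbed equation to leading order shows that exactly one real root sits slightly below $\bar a$, with derivative $\approx mc(s-\bar a)^{m-1}-(\bar\omega-\omega)<0$. Thus $(\omega,\bar\gamma)\in R_{\ex}$, so $(\bar\omega,\bar\gamma)\in\overline{R_{\ex}}\setminus R_{\ex}=\Gamma_{\nex}$. The main obstacle is precisely this last perturbation step: the order $m$ is not prescribed, and separate subcases for $m$ even versus odd are needed to check both that the perturbed root is real and that the derivative sign comes out negative; using $U\ge 0$ on the left of $\bar a$ to fix the sign of $c$ in each case is what makes the argument go through.
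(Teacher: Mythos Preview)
Your proof is correct and, for openness and $\Gamma_{\nex}\subseteq\Sigma$, essentially matches the paper (you are in fact more careful than the paper in verifying that the IFT solution is the infimum $a$). The genuine difference is in $\Sigma\subseteq\Gamma_{\nex}$: you work locally via a finite-order Taylor expansion of the real-analytic function $U(\bar\omega,\bar\gamma,\cdot)$ at $\bar a$, splitting on the parity of the vanishing order $m$ to track the perturbed root and the sign of the derivative there. This works, but one step is elided: you must argue that the root ``slightly below $\bar a$'' really is the smallest positive zero $a(\omega,\bar\gamma)$, which follows since $U(\bar\omega,\bar\gamma,s)/s$ is bounded away from $0$ on $[0,\bar a-\eta]$ for any $\eta>0$, so the shifted function has no zeros there once $\bar\omega-\omega$ is small. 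The paper sidesteps the entire local analysis by exploiting the specific form of the nonlinearity: since $U'(a)=-aF_1'(a)$ and $F_1$ is a sum of finitely many powers, $F_1'$ has only finitely many positive zeros; as $\omega\mapsto a(\omega,\bar\gamma)$ is strictly increasing on $(0,\bar\omega)$, only finitely many $\omega<\bar\omega$ can satisfy $U'(a(\omega,\bar\gamma))=0$, so $(\omega,\bar\gamma)\in R_{\ex}$ for $\omega$ arbitrarily close to $\bar\omega$. That is a two-line argument with no parity cases and no perturbation theory; your route, on the other hand, would go through for an arbitrary real-analytic $F_1$.
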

\begin{proof}
Suppose $(\omega_0,\gamma_0)\in R_{\ex}$, i.e., 
$a(\omega_0,\gamma_0)>0$ exists and $U'(a(\omega_0,\gamma_0))<0$. 
Then the implicit function theorem would show that $a(\omega,\gamma)$ exists and is a continuously differentiable function of $\omega$ and $\gamma$ on a neighbourhood of $(\omega_0,\gamma_0)$. By continuity of $U'(s)$ as a function of $s$, $\omega$, and $\gamma$, it would follow that $U'(a(\omega,\gamma))<0$ for $(\omega,\gamma)$ in a neighbourhood of $(\omega_0,\gamma_0)$. Hence $R_{\ex}$ is open.

Suppose $a(\omega_0,\gamma_0)$ exists and $U'(a(\omega_0,\gamma_0))=0$. Then by Lemma \ref{lem:aomega}, $a(\omega,\gamma_0)$ exists for all $0<\omega<\omega_0$. Differentiating $U(s) = \omega s - sF_1(s)$ gives
\begin{align*}
U'(a(\omega,\gamma_0)) = \omega - F_1(a(\omega,\gamma_0)) - a(\omega,\gamma_0)F_1'(a(\omega,\gamma_0))= -a(\omega,\gamma_0)F_1'(a(\omega,\gamma_0)).
\end{align*}
As $F_1(a)$ is a sum of finitely many powers of $a$, there are finitely many $a>0$ such that $aF_1'(a) = 0$. Since $a(\omega,\gamma_0)$ is increasing in $\omega$, it follows that there are finitely many $0<\omega<\omega_0$ such that $U'(a(\omega,\gamma_0))=0$. Thus, there are $\omega$ arbitrarily close to $\omega_0$ such that $(\omega,\gamma_0)$ satisfy the existence criterion (\ref{existence}). Since $(\omega_0,\gamma_0)\notin R_{\ex}$ this shows that $(\omega_0,\gamma_0)\in \Gamma_{\nex}$.

Conversely, suppose $(\omega_0,\gamma_0)\in \Gamma_{\nex}$. Then $a(\omega,\gamma)$ exists for some $(\omega,\gamma)$ arbitrarily close to $(\omega_0,\gamma_0)$. By Lemmas \ref{lem:aomega} and \ref{lem:agamma}, it follows that $a(\omega,\gamma)$ exists for all $\omega<\omega_0$ and $\gamma<\gamma_0$. Thus, by Lemma \ref{lem:altcts}, $a(\omega_0,\gamma_0)$ exists. If we had $U'(a(\omega_0,\gamma_0))<0$, then $(\omega_0,\gamma_0)\in R_{\ex}$.
This contradicts $(\omega_0,\gamma_0)\in\Gamma_{\nex}$ as $R_{\ex}$ is open.
So we must have $U'(a(\omega_0,\gamma_0))=0$.
\end{proof}
\begin{lemma}\label{lem:preEx2}
For $\omega>0$, $\gamma\in\R$, if $b>0$ is such that $U(b)=U'(b)=0$, then $U''(b)\geq 0$ if and only if $b=a(\omega,\gamma)$. 
\end{lemma}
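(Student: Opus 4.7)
The plan is to pass to the quotient $V(s) := U(s)/s = \omega - F_1(s)$, which is smooth on $(0,\infty)$ and is a generalized polynomial with at most four nonzero monomials (the constant $\omega$ together with three fractional powers $s^{(p-1)/2}, s^{(q-1)/2}, s^{(r-1)/2}$). By construction $a = a(\omega,\gamma)$ is the smallest positive zero of $V$, so $V > 0$ on $(0,a)$ and $V(a) = 0$. Differentiating $U = sV$ gives $U'(s) = V(s) + sV'(s)$ and $U''(s) = 2V'(s) + sV''(s)$, so any $b > 0$ with $U(b) = U'(b) = 0$ automatically satisfies $V(b) = V'(b) = 0$ and $U''(b) = bV''(b)$; in particular $U''(b) \ge 0 \iff V''(b) \ge 0$.

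For the implication $b = a \Rightarrow U''(a) \ge 0$, I would Taylor-expand $V$ at $a$: from $V(a) = V'(a) = 0$ one has $V(s) = \tfrac{1}{2} V''(a)(s-a)^2 + o((s-a)^2)$ as $s \to a$, and the sign condition $V(s) > 0$ on $(0,a)$ then forces $V''(a) \ge 0$, hence $U''(a) = aV''(a) \ge 0$.

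For the converse I would argue by contradiction: suppose $U''(b) \ge 0$ with $b \ne a$, so $b > a$ by minimality of $a$, and count positive zeros of $V$ with multiplicity until the Descartes bound is exceeded. By Lemma \ref{lem:RoS}, in its multiplicity-counting form (as in Haukkanen--Tossavainen), $V$ has at most three positive zeros counted with multiplicity, since it has at most four nonzero terms and hence at most three sign changes. The hypothesis $V(b) = V'(b) = 0$ already gives $b$ multiplicity $\ge 2$ in $V$, with multiplicity $\ge 3$ whenever $V''(b) = 0$. In the subcase $V''(b) = 0$, adding the zero at $a$ yields total multiplicity $\ge 1 + 3 = 4$, a contradiction. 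In the subcase $V''(b) > 0$, the point $b$ is a strict local minimum of $V$ with value $0$, so $V > 0$ in a punctured neighborhood of $b$; combined with $V > 0$ on $(0,a)$ and $V(a) = 0$, which forces $V'(a) \le 0$, either $V'(a) = 0$ so that $a$ has multiplicity $\ge 2$ in $V$ (total $\ge 2 + 2 = 4$), or $V'(a) < 0$ so that $V < 0$ just to the right of $a$ while $V > 0$ just to the left of $b$, forcing by the intermediate value theorem an additional zero $c \in (a,b)$ of $V$ (total $\ge 1 + 1 + 2 = 4$). Every subcase contradicts the bound, so $b = a$. The main subtlety is the boundary case $V''(b) = 0$, which is precisely what forces the multiplicity form of Descartes' rule; absent that form, the case can still be closed by one extra application of Rolle's theorem to $V'$, which has at most three nonzero terms and hence at most two positive zeros counted with multiplicity.
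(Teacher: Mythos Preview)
Your proof is correct. The paper's argument is organized a bit differently: it works directly with $U$, $U'$ (a sum of four powers), and $U''$ (a sum of three powers) rather than with $V=U/s$, and it counts only \emph{distinct} zeros rather than zeros with multiplicity. For the direction $U''(b)\ge 0\Rightarrow b=a$, the paper assumes there is some $c\in(0,b)$ with $U(c)=0$ and then, exploiting the ``free'' zero $U(0)=0$, applies Rolle: in the subcase $U''(b)>0$ it locates four distinct zeros of $U'$ (two local maxima of $U$ in $(0,c)$ and $(c,b)$, a local minimum between them, and $b$ itself), and in the subcase $U''(b)=0$ it locates three distinct zeros of $U''$. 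Your route via $V$ is slightly more streamlined in that all counting stays with one function, but it genuinely needs the multiplicity form of Descartes' rule---which Haukkanen--Tossavainen do prove, though the paper's Lemma~\ref{lem:RoS} is stated without multiplicity. The paper's approach buys the ability to use only the distinct-zero form by exploiting $U(0)=0$, which has no analogue for $V$ since $V(0^+)=\omega>0$; note too that your proposed fallback to $V'$ still invokes multiplicity (to give $b$ multiplicity $\ge 2$ in $V'$ when $V''(b)=0$), so it does not fully sidestep that dependence.
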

\begin{proof}
Let $b>0$ be such that $U(b)=U'(b)=0$. If $U''(b)<0$, then, since $U'(0)=\omega>0$, $U$ has a zero in $(0,b)$. Hence $b=a(\omega,\gamma)$ implies $U''(b)\ge0$.
Conversely, suppose there is a $c\in (0,b)$ with $U(c)=0$. If $U''(b)>0$, then $U$ has positive local maxima at some $c_1\in (0,c)$ and $c_3\in (c,b)$. Then $U$ also has a local minimum $c_2\in (c_1,c_3)$, so $U'$ has at least four positive zeros $c_1,c_2,c_3,b$. As $U'$ is a sum of four powers, this contradicts Lemma \ref{lem:RoS}. If $U''(b)=0$, then $U'$ has at least two zeros $c_1\in (0,c)$, $c_2\in (c,b)$, and $U''$ has at least three zeros $d_1\in (c_1,c_2)$, $d_2\in (c_2,b)$ and $b$. Since $U''$ is a sum of three powers, this contradicts Lemma \ref{lem:RoS}. Hence when $U''(b)\ge0$, such $c$ does not exist, and $b=a(\omega,\gamma)$.
\end{proof}
\begin{proposition}\label{prop:Rex}
For $\omega>0$, $\gamma\in\R$, we have $(\omega,\gamma)\in \Gamma_{\nex}$ if and only if $(\omega,\gamma)=(\omega_{\nex}(a),\gamma_{\nex}(a))$ and $U''(\omega,\gamma,a)\geq 0$ for some $a>0$, where
\begin{align*}
\omega_{\nex}(a) &= \frac{2a_1(q-p)}{(q-1)(p+1)}a^\frac{p-1}{2}-\frac{2a_3(r-q)}{(q-1)(r+1)}a^\frac{r-1}{2} ,
\\
\gamma_{\nex}(a)&=\frac{q+1}{q-1}\left(a_1\frac{p-1}{p+1}a^\frac{p-q}{2}+a_3\frac{r-1}{r+1}a^\frac{r-q}{2}\right).
\end{align*}
As a consequence, for each $\gamma\in \R$, there is at most one value $\omega^*(\gamma)>0$ such that $(\omega^*(\gamma),\gamma)\in\Gamma_{\nex}$. The existence regions and $\Gamma_{\nex}$ in each case are as follows:
\begin{enumerate}
\item FF case: $\Gamma_{\nex}$ is parameterized by $(\omega_{\nex}(a),\gamma_{\nex}(a))$ for $0<a\le \aFF$ where $\aFF^\frac{r-p}{2}=\frac{(q-p)(p-1)(r+1)}{(r-q)(r-1)(p+1)}$, 
or by $(\omega^*(\gamma),\gamma)$ for $\gamma\ge \gamma_1=\gamma_{\nex}(\aFF)$,
and $R_{\ex}$ is the complement of $\Gamma_{\nex}$.
\item FD case: $\Gamma_{\nex}$ is parameterized by $(\omega_{\nex}(a),\gamma_{\nex}(a))$ for $a>0$. The existence region is $\{(\omega,\gamma): 0<\omega<\omega^*(\gamma), \gamma\in\R\}$.
\item DF case: Solutions exist for all $\omega>0$ and $\gamma\in\R$.
\item DD case: $\Gamma_{\nex}$ is parameterized by $(\omega_{\nex}(a),\gamma_{\nex}(a))$ for $a>\aDD$ where $\aDD^\frac{r-p}{2}=\frac{(q-p)(r+1)}{(r-q)(p+1)}$. Noting $\omega_{\nex}(\aDD)=0$ and letting $\gamma_1 = \gamma_{\nex}(\aDD)$, we have $R_{\ex}=\{(\omega,\gamma): 0<\omega<\omega^*(\gamma), \gamma<\gamma_1\}$.\end{enumerate}
\end{proposition}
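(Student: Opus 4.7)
The plan is to combine Lemmas \ref{lem:preEx1} and \ref{lem:preEx2}: a point $(\omega,\gamma)$ lies on $\Gamma_{\nex}$ if and only if there exists $a>0$ such that $U(a)=U'(a)=0$ and $U''(a)\ge 0$ (the last condition ensures, by Lemma \ref{lem:preEx2}, that this $a$ is the $a(\omega,\gamma)$ of the existence criterion rather than some later root). The two equations $U(a)/a=0$ and $U'(a)=0$ form a linear system in the parameters $(\omega,\gamma)$. Eliminating $\omega$ by subtracting appropriate multiples gives
\begin{equation*}
0 = a_1\tfrac{p-1}{p+1}a^{\frac{p-1}{2}} - \gamma\tfrac{q-1}{q+1}a^{\frac{q-1}{2}} + a_3\tfrac{r-1}{r+1}a^{\frac{r-1}{2}},
\end{equation*}
which solves to $\gamma = \gamma_{\nex}(a)$; substituting back into $U'(a)=0$ and simplifying the coefficients via $1-\tfrac{(q+1)(p-1)}{(q-1)(p+1)}=\tfrac{2(q-p)}{(q-1)(p+1)}$ and its analogue for $r$ recovers $\omega_{\nex}(a)$.

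Next I would substitute $\gamma=\gamma_{\nex}(a)$ into the expression for $U''(a)$ and simplify; after some cancellation this reduces to
\begin{equation*}
\tfrac{2}{a^{(p-3)/2}}\,U''(a) \;=\; a_1(p-1)\tfrac{q-p}{p+1} \;-\; a_3(r-1)\tfrac{r-q}{r+1}\,a^{\frac{r-p}{2}}.
\end{equation*}
Reading off signs case by case: in FD this is always strictly positive and $\omega_{\nex}(a)>0$ for all $a>0$; in DF both $U''(a)<0$ and $\omega_{\nex}(a)<0$, so no $(\omega,\gamma)\in\Gamma_\nex$ with $\omega>0$ exists and therefore $R_\ex=\Omega$; in FF, $U''(a)\ge 0$ is equivalent to $a^{(r-p)/2}\le \tfrac{(p-1)(q-p)(r+1)}{(r-1)(r-q)(p+1)}=\aFF^{(r-p)/2}$, and since this threshold is smaller than the one for $\omega_{\nex}(a)>0$ by the factor $\tfrac{p-1}{r-1}<1$, the binding constraint is $a\le\aFF$; in DD, $\omega_{\nex}(a)>0$ demands $a>\aDD$, and since $\aFF<\aDD$ the condition $U''(a)\ge 0$ is then automatic, and $\omega_\nex(\aDD)=0$ is a direct check.

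For uniqueness of $\omega^*(\gamma)$, I would differentiate to get
\begin{equation*}
\gamma_\nex'(a) \;=\; \tfrac{q+1}{2(q-1)}\,a^{\frac{p-q-2}{2}}\!\left[-a_1\tfrac{(p-1)(q-p)}{p+1} + a_3\tfrac{(r-1)(r-q)}{r+1}\,a^{\frac{r-p}{2}}\right]\!,
\end{equation*}
whose bracket has the same structure as $U''(a)$ up to a rescaling of coefficients. This shows $\gamma_\nex$ is strictly monotone on the relevant $a$-interval in each case (decreasing on $(0,\aFF]$ in FF, decreasing on $(0,\infty)$ in FD, increasing on $[\aDD,\infty)$ in DD), so $a\mapsto\gamma_\nex(a)$ is injective, giving uniqueness and well-definedness of $\omega^*(\gamma)$; that $\omega^*$ is decreasing can be read off from the sign of $\omega_\nex'(a)$ combined with the sign of $\gamma_\nex'(a)$.

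Finally, to describe $R_\ex$ in items (1), (2), (4), I would use Lemma \ref{lem:aomega}: for fixed $\gamma$, if $a(\omega_1,\gamma)$ exists, then $a(\omega,\gamma)$ exists for all $0<\omega<\omega_1$, so the $\omega$-section of $R_\ex$ is an interval of the form $(0,\omega^*(\gamma))$ whenever $(\omega^*(\gamma),\gamma)\in\Gamma_\nex$ exists, and equals $(0,\infty)$ otherwise (item (3)). The $\gamma$-range over which $\omega^*(\gamma)$ is defined is read off from the image of $\gamma_\nex$: $[\gamma_\nex(\aFF),\infty)$ in FF, all of $\R$ in FD, and $(-\infty,\gamma_\nex(\aDD))$ in DD (using the monotonicity established above and checking the limits at the endpoints). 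The main obstacle is not any one step but the careful case-by-case bookkeeping of signs of $a_1,a_3$ together with the two threshold comparisons $\aFF<\tfrac{(q-p)(r+1)}{(r-q)(p+1)}^{\,2/(r-p)}$ in FF and $\aFF<\aDD$ in DD, both of which hinge on the simple inequality $\tfrac{p-1}{r-1}<1$.
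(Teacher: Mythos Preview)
Your derivation of the parametrization $(\omega_\nex(a),\gamma_\nex(a))$, the reduction of $U''(a)\ge 0$ to the inequality in $a^{(r-p)/2}$, and the monotonicity analysis of $\gamma_\nex$ and $\omega_\nex$ are all correct and essentially identical to the paper's approach. Two points need attention.

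First, a slip: in the DD case $\gamma_\nex$ is \emph{decreasing} on $(\aDD,\infty)$, not increasing. With $a_1=a_3=-1$ your bracket for $\gamma_\nex'(a)$ reads $\tfrac{(p-1)(q-p)}{p+1}-\tfrac{(r-1)(r-q)}{r+1}a^{(r-p)/2}$, which is negative for $a>\aFF$ and hence for $a>\aDD$. You do end up writing the correct range $(-\infty,\gamma_\nex(\aDD))$, so this is only an internal inconsistency.

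Second, and more substantively, your final paragraph treats items (1), (2), (4) uniformly, asserting that for fixed $\gamma$ the $\omega$-section of $R_\ex$ is $(0,\omega^*(\gamma))$. This is correct for FD and DD but \emph{wrong for FF}: in the FF case the proposition states that $R_\ex$ is the complement of $\Gamma_\nex$, so for $\gamma\ge\gamma_1$ the section is $(0,\omega^*(\gamma))\cup(\omega^*(\gamma),\infty)$. The reason is that in the $*$F cases, Lemma~\ref{lem:aomega} gives that $a(\omega,\gamma)$ exists for \emph{all} $\omega>0$; since $U'(a)\le 0$ always holds at the first zero and $U'(a)=0$ only on $\Gamma_\nex$, every $(\omega,\gamma)\notin\Gamma_\nex$ satisfies the existence criterion. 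Your appeal to Lemma~\ref{lem:aomega} only establishes downward persistence of the existence of $a$, which says nothing about what happens for $\omega>\omega^*(\gamma)$. Even in FD and DD you should close the argument by noting (as the paper does) that the $\omega$-section of $R_\ex$ is bounded in the $*$D cases, so if there were a component beyond $\omega^*(\gamma)$ its supremum would be a second point of $\Gamma_\nex$ at height $\gamma$, contradicting the uniqueness of $\omega^*(\gamma)$ you already proved.
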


\begin{proof}
By Lemmas \ref{lem:preEx1} and \ref{lem:preEx2}, $(\omega,\gamma)\in\Gamma_{\nex}$ if and only if there is an $a>0$ such that 
\begin{align*}
\frac{U(a)}{a}&= \omega -\frac{2a_1}{p+1}a^\frac{p-1}{2}+\frac{2\gamma}{q+1}a^\frac{q-1}{2}-\frac{2a_3}{r+1}a^\frac{r-1}{2}=0,\\
U'(a) &= \omega -  a_1a^\frac{p-1}{2} + \gamma a^\frac{q-1}{2} - a_3a^\frac{r-1}{2} = 0,
\end{align*}
and $U''(a)\geq 0$. Subtracting to eliminate $\omega$ yields
\begin{align*}
&\frac{a_1(p-1)}{p+1}a^\frac{p-1}{2} - \frac{\gamma(q-1)}{q+1}a^\frac{q-1}{2} + \frac{a_3(r-1)}{r+1}a^\frac{r-1}{2}=0\\
\iff&\gamma = a_1\frac{(p-1)(q+1)}{(q-1)(p+1)}a^\frac{p-q}{2}+ a_3\frac{(r-1)(q+1)}{(q-1)(r+1)}a^\frac{r-q}{2},
\end{align*}
and substituting to solve for $\omega$ yields
\begin{align*}
\omega = a_1\frac{2(q-p)}{(q-1)(p+1)}a^\frac{p-1}{2}-a_3\frac{2(r-q)}{(q-1)(r+1)}a^\frac{r-1}{2}.
\end{align*}
$\Gamma_{\nex}$ is therefore parameterized by $(\omega_{\nex}(a),\gamma_{\nex}(a))$ for $a$ such that  $\omega>0$ and $U''(a)\geq 0$.
The condition $\omega>0$ amounts to 
\begin{equation}\label{omega>0}
a_1\frac{(q-p)(r+1)}{(r-q)(p+1)}>a_3a^\frac{r-p}{2}.
\end{equation}
For $U''(a)\geq 0$, substituting for $\gamma_{\nex}(a)$ in $U''(a)$ gives, 
\begin{align}
\nonumber
&U''(a) = \left(-\frac{p-1}{2} + \frac{(p-1)(q+1)}{2(p+1)}\right)a_1 a^\frac{p-3}{2}+ \left( \frac{(r-1)(q+1)}{2(r+1)} - \frac{r-1}{2}\right)a_3 a^\frac{r-3}{2}\geq 0\\
&\iff a_3 a^\frac{r-p}{2}\leq a_1\frac{(q-p)(p-1)(r+1)}{(r-q)(r-1)(p+1)}.
\label{U''ge0}
\end{align}

We now consider the 4 cases.

\smallskip
\noindent\textbf{FF case:}
For $\omega>0$ and $U''(a)\geq 0$, by \eqref{omega>0} and \eqref{U''ge0}, 
\[
a^\frac{r-p}{2}<\frac{(q-p)(r+1)}{(r-q)(p+1)},\quad a^\frac{r-p}{2}\le \frac{(q-p)(p-1)(r+1)}{(r-q)(r-1)(p+1)}.
\]
Since the second bound is smaller, the first condition is redundant. Hence $\Gamma_{\nex}$ is parameterized by $(\omega_{\nex}(a),\gamma_{\nex}(a))$ for  $0<a\le \aFF$, where $\aFF^\frac{r-p}{2}=\frac{(q-p)(p-1)(r+1)}{(r-q)(r-1)(p+1)}$. When $a\le \aFF$, a calculation shows that $\gamma_{\nex}'(a)\leq0$ and $\omega_{\nex}'(a)\geq0$, so $\omega^*(\gamma)$ is well-defined and decreasing for $\gamma\geq \gamma_1:= \gamma_{\nex}(\aFF)$. Since $a(\omega,\gamma)$ exists for all $(\omega,\gamma)$ in the FF case by Lemma \ref{lem:aomega}, and $U'(a(\omega,\gamma))
\neq 0$ for $(\omega,\gamma)\notin \Gamma_{\nex}$, the existence criteria are satisfied for all $(\omega,\gamma)\notin \Gamma_{\nex}$.

\smallskip
\noindent\textbf{FD case:}
For $\omega>0$ and $U''(a)\geq 0$, by \eqref{omega>0} and \eqref{U''ge0}, they are satisfied 
for all $a>0$.  Hence $\Gamma_{\nex}$ is parameterized by $(\omega_{\nex}(a),\gamma_{\nex}(a))$ for $a>0$. A calculation shows that $\omega_{\nex}'(a)>0$ and $\gamma_{\nex}'(a)<0$, so the function $\omega^*(\gamma)$ is well-defined and decreasing for $\gamma\in \R$. For any $\gamma\in\R$, if $\omega^+(\gamma) = \sup\{\omega:(\omega,\gamma)\in R_{\ex}\}$ such that $\omega^+(\gamma)<\infty$, then $(\omega^+,\gamma)\in \Gamma_{\nex}$. By Lemma \ref{lem:aomega}, $\omega^+(\gamma)<\infty$, so $\omega^+(\gamma)=\omega^*(\gamma)$. Hence $(\omega,\gamma)\in R_{\ex}$ if and only if $\omega<\omega^*(\gamma)$.

\smallskip
\noindent\textbf{DF case:}
We have $\omega_{\nex}(a)<0$ for all $a>0$, so $\Gamma_{\nex}=\emptyset$. Since $a(\omega,\gamma)>0$ for all $\omega>0$, $\gamma\in\R$, the existence criteria are satisfied on the entire half plane.

\smallskip
\noindent\textbf{DD case:}
For $\omega>0$ and $U''(a)\geq 0$, by \eqref{omega>0} and \eqref{U''ge0}, 
\[
a^\frac{r-p}{2}>\frac{(q-p)(r+1)}{(r-q)(p+1)},\quad a^\frac{r-p}{2}\ge \frac{(q-p)(p-1)(r+1)}{(r-q)(r-1)(p+1)}.
\]
Since the second bound is smaller, the second condition is redundant.
Hence $\Gamma_{\nex}$ is parameterized by $(\omega_{\nex}(a),\gamma_{\nex}(a))$ for $a>\aDD$ where $\aDD^\frac{r-p}{2}=\frac{(q-p)(r+1)}{(r-q)(p+1)}$. For $a>\aDD$,
 a calculation shows that $\omega_{\nex}'(a)>0$ and $\gamma_{\nex}'(a)<0$. Hence $\omega^*(\gamma)$ is well defined and decreasing for $\gamma<\gamma_1:=\gamma_{\nex}(\aDD)$. Suppose $(\omega_0,\gamma_0)\in R_{\ex}$. By Lemma \ref{lem:aomega}, $\omega^+(\gamma_0) = \sup\{\omega:(\omega,\gamma)\in R_{\ex}\}<\infty$, so $(\omega^+(\gamma),\gamma)=(\omega^*(\gamma),\gamma)\in \Gamma_{\nex}$. Since $\gamma_{\nex}$ is decreasing, we then have $\gamma<\gamma_1$. Conversely, if $\gamma<\gamma_1$ and $\omega_0<\omega^*(\gamma)$, then, by Lemma \ref{lem:aomega}, $a(\omega_0,\gamma)$ exists. Since $(\omega_0,\gamma)\notin\Gamma_{\nex}$, we also have $U'(a(\omega_0,\gamma))\neq 0$. Thus, the existence criteria are satisfied for $(\omega,\gamma)$. Note that $\omega_{\nex}(\aDD)=0$ and there is no solution for $ \gamma \ge \gamma_1$.
\end{proof}

\section{Limits of the stability functional near the nonexistence curve} 
\label{S5}
The following proposition generalizes Proposition 4.1 in \cite{LiTsZw21} to arbitrary $1<p<q<r$.
\begin{proposition}\label{prop:Gamlimits}
Let $(\omega_0,\gamma_0)$ be a point on $\Gamma_{\nex}$ that is not an endpoint of the parameterization in Proposition \ref{prop:Rex}. Then $\lim_{\omega\to \omega_0^-\gamma\to\gamma_0^-} J(\omega,\gamma)=+\infty$ and in the FF case $\lim_{\omega\to \omega_0^+,\gamma\to\gamma_0^+}J(\omega,\gamma)=-\infty$.
\end{proposition}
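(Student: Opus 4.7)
At a non-endpoint $(\omega_0,\gamma_0) \in \Gamma_{\nex}$, Proposition \ref{prop:Rex} combined with Lemma \ref{lem:preEx2} provides the crucial structural information: $U$ (at $(\omega_0,\gamma_0)$) has a double zero at $s = a_0 := a(\omega_0,\gamma_0)$, i.e., $U(a_0) = U'(a_0) = 0$ with $U''(a_0) > 0$ strictly. The two limits asserted in the proposition correspond to the two ways a small perturbation of $U$ can break this double zero, and these are tracked by Lemma \ref{lem:altcts}. In each case the main work is to extract the leading-order blow-up from the Iliev--Kirchev formula \eqref{I-KJ} by Taylor-expanding $U$ around the critical point and rescaling.

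\emph{Lower-side limit (all cases where $\Gamma_{\nex}$ exists).} As $(\omega,\gamma) \to (\omega_0,\gamma_0)^-$ within $R_{\ex}$, Lemma \ref{lem:altcts}(a) gives $a(\omega,\gamma) \to a_0$, and since $U'(a) < 0$ throughout $R_{\ex}$, the quantity $\epsilon := -U'(a)$ tends to $0^+$. I will Taylor-expand $U(s) = \epsilon(a-s) + \tfrac{1}{2}U''(a)(a-s)^2 + O((a-s)^3)$ and $U'(a) - U'(s) = U''(a)(a-s) + O((a-s)^2)$, then substitute $a - s = (\epsilon/c)v$ with $c = U''(a_0)/2$. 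This rescaling converts the bracketed factor $3 + s(U'(a)-U'(s))/U(s)$ into $a_0U''(a_0)/[\epsilon(1+v)] + O(1)$ and $\sqrt{s/U(s)}$ into $\sqrt{a_0/c}/(\epsilon\sqrt{v(1+v)})+o(\epsilon^{-1})$, so the singular layer contributes an amount of order $C_1/\epsilon$ to the integral in \eqref{I-KJ}, with $C_1 > 0$ an explicit convergent improper integral in $v$. The outer portion of the integral remains bounded by continuity (using $U, U' \in C^0$ and that $U$ is bounded below on compact subsets of $(0,a-\delta)$). Hence the integral is $C_1/\epsilon + O(1)$, and multiplying by the prefactor $-1/(2U'(a)) = 1/(2\epsilon)$ gives $J \sim C_1/(2\epsilon^2) \to +\infty$.

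\emph{FF upper-side limit.} Now $(\omega,\gamma) \to (\omega_0,\gamma_0)^+$ in the FF case. Lemma \ref{lem:altcts}(b) and the remark after it give $a(\omega,\gamma) \to b$, where $b > a_0$ is the next positive zero of $U(\omega_0,\gamma_0,\cdot)$; since $U \to -\infty$ at infinity in the FF case and Descartes' rule of signs (Lemma \ref{lem:RoS}) prevents a second double zero, $b$ is a simple zero with $U'(b) < 0$. The singularity in \eqref{I-KJ} is now at the \emph{interior} point $s = a_0$, where $\epsilon := U(\omega,\gamma,a_0) \to 0^+$ and locally $U(s) \approx \epsilon + \tfrac{1}{2}U''(a_0)(s-a_0)^2$. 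The key sign is that at $s = a_0$, $U'(s) \to 0$ while $U'(a) \to U'(b) < 0$, so the bracketed factor is dominated by the \emph{negative} quantity $a_0 U'(b)/\epsilon$. Substituting $s - a_0 = \sqrt{\epsilon/c}\,u$ with $c = U''(a_0)/2$ converts the singular layer into a contribution of order $-C_2/\epsilon$ to the integral, with $C_2 > 0$. The prefactor $-1/(2U'(a)) \to 1/(2|U'(b)|) > 0$ now tends to a finite positive limit, so $J \to -\infty$.

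\emph{Main obstacle.} The technically delicate step is making the rescaled leading-order asymptotics rigorously control the full integral in \eqref{I-KJ}, uniformly over the direction of approach to $(\omega_0,\gamma_0)$. I would handle this by partitioning $[0,a]$ into a singular inner layer of width $\sim \epsilon$ in the lower case (resp.\ $\sim \sqrt{\epsilon}$ in the upper case) and a complementary outer region, treating the outer region by dominated convergence (using continuity of $U$ and $U'$ in $(\omega,\gamma,s)$, plus uniform positive lower bounds on $U$ away from its zeros), and reducing the inner region to convergent improper integrals independent of $\epsilon$ via the substitution. The strict positivity $U''(a_0) > 0$ at non-endpoints of $\Gamma_{\nex}$ is exactly what makes the quadratic leading term dominate in the rescaled variable and keeps the improper integrals convergent; at an endpoint this scheme would degenerate, consistent with the proposition's hypothesis.
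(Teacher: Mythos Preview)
Your plan is correct, and for the upper-side (FF) limit it matches the paper almost exactly: both arguments locate the singularity at the interior point $s=a_0$, use that the bracket in \eqref{I-KJ} is dominated there by $s(U'(a)-U'(s))/U(s)\to a_0U'(b_0)/U(s)<0$, and note that the prefactor $-1/(2U'(a))\to 1/(2|U'(b_0)|)$ is finite and positive. For the lower-side limit, however, the paper takes a different route. Rather than Taylor-expanding \eqref{I-KJ} near $s=a$ and rescaling, it works with the fixed-interval formula of Lemma~\ref{lem:pqrstab} and checks \emph{directly}, using the explicit parameterization $\gamma_{\nex}(a_0)$ from Proposition~\ref{prop:Rex}, that $N(a_0,s)/(1-s^{(p-1)/2})$ has a strictly positive limit as $s\to 1$ at every non-endpoint; since $D(a_0,s)=U(a_0s)/(a_0s)\sim c(1-s)^2$, this forces $\int_{1-\delta}^1 N/D^{3/2}\,ds\to+\infty$, while the integral over $[0,1-\delta]$ stays bounded. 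Your rescaling approach buys a quantitative rate ($J\sim C/\epsilon^2$) and needs only the structural input $U''(a_0)>0$, but it costs the extra work you flag as the main obstacle: controlling Taylor remainders uniformly on the growing rescaled interval and across all directions of approach. The paper's argument avoids that layer analysis entirely at the price of invoking the case-specific formulas for $N$, $D$ and $\gamma_{\nex}$.
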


\begin{proof}
Recall we denote the integrand of $J(\omega,\gamma)$ in \eqref{pqrstab} as $N(a,s)/D(a,s)^{3/2}$.
We first consider $N(a,s)$ for $s$ close to $1$. By Lemma \ref{lem:altcts} $a(\omega,\gamma)\to a_0=a(\omega_0,\gamma_0)$ as $\omega\nearrow\omega_0,\gamma\nearrow \gamma_0$, and by Proposition \ref{prop:Rex}, $(\omega_0,\gamma_0)=(\omega_{\nex}(a_0),\gamma_{\nex}(a_0))$. Thus, as $\omega\nearrow\omega_0,\gamma\nearrow \gamma_0$,
\begin{align}\nonumber
\frac{N(a(\omega,\gamma),s)}{1-s^\frac{p-1}{2}}\to\ &
\frac{N(a_0,s)}{1-s^\frac{p-1}{2}} 
=a_1\frac{5-p}{p+1}a_0^\frac{p-1}{2}-a_1\frac{(5-q)(p-1)(1-s^\frac{q-1}{2})}{(q-1)(p+1)(1-s^\frac{p-1}{2})}a_0^\frac{p-1}{2}\\
&-a_3\frac{(5-q)(r-1)(1-s^\frac{q-1}{2})}{(q-1)(r+1)(1-s^\frac{p-1}{2})}a_0^\frac{r-1}{2}+a_3\frac{(5-r)(1-s^\frac{r-1}{2})}{(r+1)(1-s^\frac{p-1}{2})}a_0^\frac{r-1}{2}\label{eq5.1}
\end{align}
and using Lemma \ref{lem:lineq} applied to $\frac{1-s^\frac{l-1}{2}}{1-s^\frac{p-1}{2}}$ for $l=q,r$, we see that this convergence is uniform on $[0,1]$. Moreover, $\frac{1-s^\frac{l-1}{2}}{1-s^\frac{p-1}{2}}\to \frac{l-1}{p-1}$ as $s\to 1$, so
\begin{align*}
\frac{N(a_0,s)}{1-s^\frac{p-1}{2}}
\to \frac{a_1(q-p)}{p+1}a_0^\frac{p-1}{2}+\frac{a_3(r-1)(q-r)}{(p-1)(r+1)}a_0^\frac{r-1}{2}\quad\text{as } s\to 1.
\end{align*}
The bounds on $a$ given in Proposition \ref{prop:Rex} ensure that this quantity is positive in the FD and DD cases, and in the FF case so long as $(\omega_0,\gamma_0)$ is not the endpoint of $\Gamma_0$. Since the convergence of $N(a,s)/(1-s^\frac{p-1}{2})$ is uniform in $s$ there is therefore an $\eps>0$ and $\delta>0$ such that, for $\omega<\omega_0$ and $\gamma<\gamma_0$ with $(\omega,\gamma)$ sufficiently close to $(\omega_0,\gamma_0)$, $N(a(\omega,\gamma),s)>\eps(1-s^\frac{p-1}{2})$ for all $s\in (1-\delta,1]$. Since $U(as)/as=O((1-s)^2)$ near $s=1$ when $\omega = \omega_0$, $\gamma=\gamma_0$, we then have
\begin{align*}
\lim_{\omega\to\omega_0^-, \gamma\to\gamma_0^-}\int_{1-\delta}^1 \frac{N(a,s)}{\left(\frac{U(as)}{as}\right)^\frac{3}{2}}ds \geq \int_{1-\delta}^1 \frac{\eps(1-s^\frac{p-1}{2})}{\left(\frac{U(as)}{as}\right)^\frac{3}{2}} ds= \infty.
\end{align*}
On the other hand, $D(a,s)$ is continuous in $a,\gamma$, and $s$. Since {\crm{$\frac{U(as)}{as}$}} is positive on $[0,1-\delta]$ for all $\omega,\gamma$, it follows that the infimum of $D(a,s)$ over $s\in [0,1-\delta]$, $\omega\in [\omega_0-\eps,\omega_0]$, $\gamma\in[\gamma_0-\eps,\gamma_0]$ is positive for some $\eps>0$. As $N(a(\omega,\gamma),s)$ is also bounded for $\omega,\gamma$ close to $\omega_0^-,\gamma_0^-$, the integral from $0$ to $1-\delta$ has a finite limit as $\omega\to\omega_0^-$, $\gamma\to \gamma_0^-$. Hence $\lim_{\omega\to \omega_0^-,\gamma\to \gamma_0^-} J(\omega,\gamma) = \infty$.

In the FF case, by Lemma \ref{lem:altcts}, we have $\lim_{\omega\to \omega^+,\gamma\to\gamma^+} a(\omega,\gamma)= b_0$ for some $b_0>a_0$ with $U'(b_0)<0$. Using the Iliev-Kirchev formula (\ref{I-KJ}), we note that
\begin{align*}
3\frac{U(s)}{s}+U'(a(\omega,\gamma))-U'(s)\to 3\frac{U(s)}{s}+U'(b_0)-U'(s)
\end{align*}
as $\omega\nearrow\omega_0$, $\gamma\nearrow \gamma_0$, and that this convergence is uniform on $[0,1]$. At $s=a_0$, the right hand side is $U'(b_0)<0$, so there are $\delta,\eps>0 $ such that 
\begin{align*}
\int_{a_0-\delta}^{a_0+\delta}\frac{3U(s)/s+U'(a)-U'(s)}{\left(\frac{U(as)}{as}\right)^\frac{3}{2}}ds<\int_{a_0-\delta}^{a_0+\delta}\frac{-\eps}{\left(\frac{U(as)}{as}\right)^\frac{3}{2}}ds.
\end{align*}
for $\omega<\omega_0$, $\gamma<\gamma_0$ close to $(\omega_0,\gamma_0)$. Since $U(\omega_0,\gamma_0,s)$ has a double zero at $s=a_0$, the limit of the right hand side is $-\infty$ for $\omega\nearrow \omega_0$, $\gamma\nearrow\gamma_0$. Since $U'(\omega_0,\gamma_0,a_0)=U(\omega_0,\gamma_0,a_0)=0$, if $U(\omega_0,\gamma_0,c_0)=0$ for some $c_0\in (0,b_0)\setminus\{a_0\}$, then $U'(\omega_0,\gamma_0,s)$ would have at least four zeros in $[0,b_0]$. This contradicts Lemma \ref{lem:RoS}, so $U(\omega_0,\gamma_0,s)>0$ for $s\in(0,b_0)\setminus\{a_0\}$. Since $U'(b_0)<0$, the integrand of (\ref{I-KJ}) is $\Theta((b_0-s)^{1/2})$ near $b_0$,%
\footnote{The Landau notation $f(s)=\Theta(g(s))$ for $s$ near $b_0$ means 
$C^{-1} g(s) \le f(s) \le C g(s)$ for $s$ near $b_0$ for some $C>0$.}
and is therefore uniformly integrable on $[0,a_0-\delta]\cup[a_0+\delta,b_0]$. Hence $\lim_{\omega\to\omega^+,\gamma\to\gamma^+}J(\omega,\gamma) = -\infty$.
\end{proof}

\section{Theorems for the FF Case}\label{S6}
{\crm In this section we prove results for the FF case with $a_1=a_3=1$.}
\begin{proposition}\label{prop:FFlimits}
The limits of $J(\omega,\gamma)$ for $\omega\to 0,\infty$ and $\gamma\to -\infty,\infty$ are as follows:
\begin{enumerate}
\item\label{case:FFomsml}
\begin{enumerate}
\item
If $p>5$, then $\lim_{\omega\to 0} J(\omega,\gamma)=-\infty$ for all $\gamma\in \R$. 
\item
If $p=5$, $\gamma\neq 0$, there are four cases:
\begin{enumerate}
\item If $q>9$, then $\lim_{\omega\to0} J(\omega, \gamma) =0^{\sign(\gamma)} $.
\item If $q = 9$ and $\gamma>0$, then $\lim_{\omega\to 0} J(\omega,\gamma) \in (0,\infty)$.
\item If $q = 9$ and $\gamma<0$, then $\lim_{\omega\to 0} J(\omega,\gamma) \in (-\infty,0)$.
\item If $q<9$, then $\lim_{\omega\to0} J(\omega, \gamma) =\sign(\gamma)\infty$.
\end{enumerate}
\item
If $p=5$, $\gamma=0$, there are three cases:
\begin{enumerate}
\item If $r>9$, then $\lim_{\omega\to0} J(\omega, \gamma) =0^- $.
\item If $r = 9$, then $\lim_{\omega\to 0} J(\omega,\gamma) \in (-\infty,0)$.
\item If $r<9$, then $\lim_{\omega\to0} J(\omega, \gamma) =-\infty$.
\end{enumerate}
\item
If $\frac{7}{3}<p<5$, then $\lim_{\omega\to0} J(\omega,\gamma) =\infty$ for all $\gamma\in \R$. 
\item
If $p=\frac{7}{3}$, then $\lim_{\omega\to0} J(\omega,\gamma)\in (0,\infty)$ for all $\gamma\in \R$. 
\item
If $p<\frac{7}{3}$, then $\lim_{\omega\to0} J(\omega,\gamma)=0^+$ for all $\gamma\in \R$. 
\end{enumerate}
\item\label{case:FFomlrg}
\begin{enumerate}
\item 
If $r>5$, then $\lim_{\omega\to \infty} J(\omega,\gamma)=0^-$ for all $\gamma\in \R$. 
\item
 If $r=5$, then $\lim_{\omega\to\infty} J(\omega, \gamma) =0^-$ for $\gamma>0$, and $\lim_{\omega\to 0} J(\omega,\gamma)=0^+ $ for $\gamma\leq 0$.
\item
If $\frac{7}{3}<r<5$, then $\lim_{\omega\to\infty} J(\omega,\gamma) =0^+$ for all $\gamma\in \R$.
\item
If $r=\frac{7}{3}$, then $\lim_{\omega\to\infty} J(\omega,\gamma)\in (0,\infty)$ for all $\gamma\in \R$.
\item
If $r<\frac{7}{3}$, then $\lim_{\omega\to\infty} J(\omega,\gamma)=\infty$ for all $\gamma\in \R$.
\end{enumerate}
\item\label{case:FFgamlrg}
\begin{enumerate}
\item If $r<\frac{7}{3}$, then $\lim_{\gamma\to\infty}J(\omega,\gamma)=\infty$ for all $ \omega>0$. 
\item If $r=\frac{7}{3}$, then $\lim_{\gamma\to\infty}J(\omega,\gamma)\in (0,\infty)$ for all $ \omega>0$. 
\item If $r>\frac{7}{3}$ and $r+2q<7$, then $\lim_{\gamma\to\infty}J(\omega,\gamma)=0^+$ for all $\omega>0$. 
\item If $q<7/3$ and $r+2q=7$ then $\lim_{\gamma\to\infty}J(\omega,\gamma)=0$ for all $\omega>0$. 
\item If $q<7/3$ and $r+2q>7$, then $\lim_{\gamma\to\infty} J(\omega,\gamma)=0^-$ for all $\omega>0$.
\item If $q\ge 7/3$, then $\lim_{\gamma\to\infty} J(\omega,\gamma)=-\infty$ for all $\omega>0$.
\end{enumerate}
\item\label{case:FFgamsml}
\begin{enumerate}
\item If $q\leq5$, then $\lim_{\gamma\to-\infty}J(\omega,\gamma)=0^+$ for all $\omega>0$. 
\item If $q>5$, then $\lim_{\gamma\to-\infty} J(\omega,\gamma)= 0^-$ for all $\omega>0$.
\end{enumerate}
\end{enumerate}
\end{proposition}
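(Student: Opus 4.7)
The plan is to apply Lemma \ref{lem:pqrstab}, writing $J(\omega,\gamma)=C\int_0^1 N(a,s)/D(a,s)^{3/2}\,ds$, and in each of the four limiting regimes first track how $a=a(\omega,\gamma)$ behaves via Lemmas \ref{lem:aomega} and \ref{lem:agamma}, then identify the dominant monomial in $a$ of $C=-a/(4\sqrt{2}\,U'(a))$ and of the integrand. The identity $F_1(a)=\omega$ from the definition of $a$ will be used repeatedly to relate $\omega$, $\gamma$, and $a$, and uniform ratio bounds from Lemma \ref{lem:lineq} justify the limit--integral interchanges on subintervals bounded away from $s=1$, while dominated convergence handles the integrable square-root singularity at $s=1$.

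For part \ref{case:FFomsml} ($\omega\to 0^+$, so $a\to 0$), the $A_p$ terms dominate both $N$ and $D$, and $U'(a)\sim -\tfrac{p-1}{p+1}a^{(p-1)/2}$. Factoring $a^{(p-1)/2}$ out of numerator and denominator yields
\begin{align*}
J(\omega,\gamma)\sim \frac{(p+1)(5-p)}{4\sqrt{2}\,(p-1)}\,a^{(7-3p)/4}\int_0^1 \tilde A_p(s)^{-1/2}\,ds,\qquad \tilde A_l(s):=\tfrac{1-s^{(l-1)/2}}{l+1},
\end{align*}
with the integral finite and positive. The sign of $5-p$ controls the sign and the sign of $7-3p$ the magnitude, delivering subcases (a), (d), (e), (f). For $p=5$ the leading coefficient of $N$ vanishes, and the next-order $N$-term is $-\gamma(5-q)A_q$ (when $\gamma\ne 0$, subcase (b)) or $(5-r)A_r$ (when $\gamma=0$, subcase (c)); the same factoring gives $J\sim(\text{const})\cdot a^{(q-9)/2}$ or $a^{(r-9)/2}$, yielding the thresholds $q=9$ and $r=9$ and the claimed signs $\sign(\gamma)$ and $-$. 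Part \ref{case:FFomlrg} ($\omega\to\infty$, so $a\to\infty$) is completely symmetric with $A_r$ in place of $A_p$ and exponents $(7-3r)/4$, $(q-9)/2$, $(p-9)/2$; for $r=5$ the next-order $-\gamma(5-q)A_q$ term (with $5-q>0$ since $q<r=5$) gives sign $-\sign(\gamma)$, and the fallback $(5-p)A_p$ with $5-p>0$ provides the $\gamma=0$ sign $+$.

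For part \ref{case:FFgamlrg} ($\gamma\to\infty$, $\omega$ fixed), the identity $F_1(a)=\omega$ forces cancellation between the two positive terms and the negative $-\gamma$ term, yielding $a\to\infty$ with $\gamma\sim \tfrac{q+1}{r+1}a^{(r-q)/2}$. Substituting into $U'(a)$ gives $U'(a)\sim -\tfrac{r-q}{r+1}a^{(r-1)/2}$, and into the dominant parts of $D$ and $N$ produces
\begin{align*}
D(a,s)&\sim \tfrac{a^{(r-1)/2}}{r+1}\bigl(s^{(q-1)/2}-s^{(r-1)/2}\bigr),\\
N(a,s)&\sim \tfrac{a^{(r-1)/2}}{r+1}\bigl[-(5-q)(1-s^{(q-1)/2})+(5-r)(1-s^{(r-1)/2})\bigr].
\end{align*}
This reduces $J$ to $(\text{positive const})\cdot a^{(7-3r)/4}\cdot I(q,r)$, where $I(q,r)$ is exactly the integral of Lemma \ref{lem:2pow} with $(p,q)$ there replaced by $(q,r)$, equal to $\tfrac{2(7-2q-r)}{r-q}B\bigl(\tfrac{7-3q}{2(r-q)},\tfrac12\bigr)$ whenever $q<7/3$. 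The signs of $7-2q-r$ and $7-3r$ then yield subcases (a)--(e). Subcase (f), $q\ge 7/3$, is the main obstacle: the limiting integrand in $I(q,r)$ is non-integrable at $s=0$, so the Kfoury--Le Coz--Tsai formula of Lemma \ref{lem:2pow} does not apply. The plan here is a boundary-layer analysis at $s=0$ with the rescaling $s=a^{-(r-1)/(q-1)}\sigma$, chosen so that the combination $-\gamma A_q+A_r$ in $D$ becomes comparable to the $\omega$-piece $D(a,0)=\omega/2$; a matched asymptotic expansion in the inner and outer regions, using that the dominant $N$-contribution near $s=0$ has sign $q-r<0$, yields the leading-order negative limit claimed in (f).

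Finally, for part \ref{case:FFgamsml} ($\gamma\to-\infty$, $\omega$ fixed), $a\to 0$ by Lemma \ref{lem:agamma} and the balance $F_1(a)=\omega$ forces $-\gamma a^{(q-1)/2}\sim (q+1)\omega/2$. Then $U'(a)\sim -(q-1)\omega/2$, and to leading order $N\sim \omega(5-q)(1-s^{(q-1)/2})$, $D\sim \omega(1-s^{(q-1)/2})$, giving
\begin{align*}
J(\omega,\gamma)\sim \frac{(5-q)\,a}{4\sqrt{2}\,\omega^{3/2}(q-1)}\int_0^1 (1-s^{(q-1)/2})^{-1/2}\,ds\ \longrightarrow\ 0,
\end{align*}
with sign $\sign(5-q)$; this handles (b) and the case $q<5$ of (a). For the boundary $q=5$, the leading coefficient vanishes and the next-order $(5-p)A_p$ with $5-p>0$ (since $p<q=5$) fixes the sign as $+$, closing (a).
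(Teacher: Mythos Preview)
Your approach coincides with the paper's for parts \ref{case:FFomsml}, \ref{case:FFomlrg}, \ref{case:FFgamsml}, and parts \ref{case:FFgamlrg}(a)--(e): the same factoring of the dominant power of $a$ out of $N$, $D$, and $U'(a)$, the same use of $F_1(a)=\omega$ to determine the asymptotics of $a$ and of the subleading coefficients, and the same appeal to Lemma~\ref{lem:2pow} for the two-power limit in part~\ref{case:FFgamlrg}. Up to harmless constant factors, your formulas match those in the paper.

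The one genuine divergence is part~\ref{case:FFgamlrg}(f), the case $q\ge 7/3$. You propose an inner--outer matched asymptotic expansion with boundary layer $s=a^{-(r-1)/(q-1)}\sigma$ near $s=0$. The paper does something much shorter: after the same factoring as in (a)--(e), it simply observes that the limiting integrand $\bigl[-(5-q)(1-s^{(q-1)/2})+(5-r)(1-s^{(r-1)/2})\bigr]\big/\bigl(s^{(q-1)/2}-s^{(r-1)/2}\bigr)^{3/2}$ fails to be integrable at $s=0$ once $q\ge 7/3$, while remaining uniformly integrable at $s=1$; since the numerator there tends to $q-r<0$, the paper concludes $J\to -\infty$. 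So the paper avoids any rescaling or layer analysis entirely. Your plan is not wrong in spirit---the blow-up does come from the $s\approx 0$ region and the correct sign is negative---but the matched-asymptotics machinery is heavier than what the paper invokes, and your description (``a matched asymptotic expansion \dots\ yields the leading-order negative limit'') is too sketchy to count as a proof. If you want to turn it into one, you would need to control the competition between the vanishing prefactor $C a^{-(r-1)/4}=\Theta(a^{(7-3r)/4})$ and the divergent inner integral, which is not automatic.
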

\begin{proof}
First consider the case $\omega \to 0^+$. We have $a \to 0^+$ by Lemma \ref{lem:aomega}.
Suppose $p\neq 5$. Factoring out $a^\frac{p-1}{2}$ from Lemma \ref{lem:pqrstab} gives
\begin{align*}
J(\omega,\gamma) &= \frac{C(\omega,\gamma)}{a^\frac{p-1}{4}}\int_0^1\frac{\frac{a_1(5-p)}{p+1}(1-s^\frac{p-1}{2})-\frac{\gamma(5-q)}{q+1}(1-s^\frac{q-1}{2})a^\frac{q-p}{2}+\frac{a_3(5-r)}{r+1}(1-s^\frac{r-1}{2})a^\frac{r-p}{2}}{\left(\frac{a_1}{p+1}(1-s^\frac{p-1}{2})-\frac{\gamma}{q+1}(1-s^\frac{q-1}{2})a^\frac{q-p}{2}+\frac{a_3}{r+1}(a-s^\frac{r-1}{2})a^\frac{r-p}{2}\right)^\frac{3}{2}}\\
&=\frac{(5-p)C(\omega,\gamma)}{a^\frac{p-1}{4}}\left(\int_0^1\left(\frac{p+1}{a_1(1-s^\frac{p-1}{2})}\right)^\frac{1}{2}+o(1)\right),
\end{align*}
where $o(1) \to 0$ as $a \to 0^+$.
When $p=5$, the first term in the numerator vanishes. For $p=5$, $\gamma\neq 0$, 
\begin{align*}
J(\omega,\gamma) = -\gamma(5-q)a^\frac{2q-3p+1}{4}C(\omega,\gamma)\left(\int_0^1\frac{\frac{1}{q+1}(1-s^\frac{q-1}{2})}{\left(\frac{a_1}{p+1}(1-s^\frac{p-1}{2})\right)^{3/2}}+o(1)\right).
\end{align*}
And when $p=5$, $\gamma = 0$,
\begin{align*}
J(\omega,\gamma) = a_3(5-r)a^\frac{2r-3p+1}{4}C(\omega,\gamma)\left(\int_0^1\frac{\frac{1}{r+1}(1-s^\frac{r-1}{2})}{\left(\frac{a_1}{p+1}(1-s^\frac{p-1}{2})\right)^{3/2}}ds+o(1)\right).
\end{align*}
For the asymptotic behaviour of $U'(a)$, we use $F_1(a)=\omega$ and get
\begin{align*}
U'(a) = \frac{2a_1}{p+1}a^\frac{p-1}{2}-\frac{2\gamma}{q+1}a^\frac{q-1}{2}+\frac{2a_3}{r+1}a^\frac{r-1}{2}-a_1a^\frac{p-1}{2}+\gamma a^\frac{q-1}{2} - a_3a^\frac{r-1}{2} = -\Theta(a^\frac{p-1}{2}).
\end{align*}
Thus $C(\omega,\gamma) = \Theta(a^\frac{3-p}{2})$ as $a\to 0$.
Altogether we have, for $p\neq 5$,
\begin{align*}
J(\omega,\gamma) = (5-p)\Theta(a^\frac{7-3p}{4}),
\end{align*}
for $p=5$, $\gamma\neq 0$,
\begin{align*}
J(\omega,\gamma) = -\gamma(5-q)\Theta(a^\frac{q-9}{2}),
\end{align*}
and for $p=5$, $\gamma = 0$,
\begin{align*}
J(\omega,\gamma) = a_3(5-r)\Theta(a^\frac{r-9}{2}).
\end{align*}
This proves part \ref{case:FFomsml}.

For the large $\omega$ case, 
we have $a \to \infty$ as $\omega \to \infty$ by Lemma \ref{lem:aomega}.
We factor out $a^\frac{r-1}{2}$ from Lemma \ref{lem:pqrstab} to get, for $r\neq 5$,
\begin{align*}
J(\omega,\gamma) &= \frac{C(\omega,\gamma)}{a^\frac{r-1}{4}}\int_0^1\frac{\frac{a_1(5-p)}{p+1}(1-s^\frac{p-1}{2})a^\frac{p-r}{2}-\frac{\gamma(5-q)}{q+1}(1-s^\frac{q-1}{2})a^\frac{q-r}{2}+\frac{a_3(5-r)}{r+1}(1-s^\frac{r-1}{2})}{\left(\frac{a_1}{p+1}(1-s^\frac{p-1}{2})a^\frac{p-r}{2}-\frac{\gamma}{q+1}(1-s^\frac{q-1}{2})a^\frac{q-r}{2}+\frac{a_3}{r+1}(a-s^\frac{r-1}{2})\right)^\frac{3}{2}}\\
&=\frac{(5-r)C(\omega,\gamma)}{a^\frac{r-1}{4}}\left(\int_0^1\left(\frac{r+1}{a_3(1-s^\frac{r-1}{2})}\right)^\frac{1}{2}ds+o(1)\right),
\end{align*}
where $o(1) \to 0$ as $a \to \infty$.
When $r=5$ and $\gamma\neq 0$,
\begin{align*}
J(\omega,\gamma) = -\gamma(5-q)a^\frac{2q-3r+1}{4}C(\omega,\gamma)\left(\int_0^1\frac{\frac{1}{q+1}(1-s^\frac{q-1}{2})}{\left(\frac{a_3}{r+1}(1-s^\frac{r-1}{2})\right)^{3/2}}ds+o(1)\right).
\end{align*}
And when $r=5$, $\gamma = 0$,
\begin{align*}
J(\omega,\gamma) = a_1(5-p)a^\frac{2p-3r+1}{4}C(\omega,\gamma)\left(\int_0^1\frac{\frac{1}{p+1}(1-s^\frac{p-1}{2})}{\left(\frac{a_3}{r+1}(1-s^\frac{r-1}{2})\right)^{3/2}}ds+o(1)\right).
\end{align*}
For the asymptotics of $U'(a)$ as $a\to\infty$, we use $F_1(a) = \omega$, and get
\begin{align*}
U'(a) = \frac{2a_1}{p+1}a^\frac{p-1}{2}-\frac{2\gamma}{q+1}a^\frac{q-1}{2}+\frac{2a_3}{r+1}a^\frac{r-1}{2}-a_1a^\frac{p-1}{2}+\gamma a^\frac{q-1}{2} - a_3a^\frac{r-1}{2} = -\Theta(a^\frac{r-1}{2}).
\end{align*}
Thus $C = \Theta(a^\frac{3-r}{2})$. Altogether, for $r\neq 5$,
\begin{align*}
J(\omega,\gamma) = (5-r)\Theta(a^\frac{7-3r}{4}),
\end{align*}
for $r=5$, $\gamma\neq 0$,
\begin{align*}
J(\omega,\gamma) = -\gamma(5-q)\Theta(a^\frac{q-9}{2}),
\end{align*}
and for $r=5$, $\gamma = 0$,
\begin{align*}
J(\omega,\gamma) = a_1(5-p)\Theta(a^\frac{p-9}{2}).
\end{align*}
This proves part \ref{case:FFomlrg}.

For the large $\gamma$ case, fix $\omega>0$. By Lemma \ref{lem:agamma}, we may equivalently consider the limit as $a\to\infty$ with $\gamma$ as a function of $a$. As 
\begin{align*}
    \omega = F_1(a) = \frac{2a_1 a^\frac{p-1}{2}}{p+1}-\frac{2\gamma a^\frac{q-1}{2}}{q+1}+\frac{2a_3a^\frac{r-1}{2}}{r+1}
\end{align*}
we have $\lim_{a\to \infty}\frac{\gamma}{q+1}a^\frac{q-r}{2}=\frac{1}{r+1}$. If $q<\frac{7}{3}$, then, as $a\to\infty$,{\crm
\begin{align*}
J(\omega,\gamma) &= \frac{C(\omega,\gamma)}{a^\frac{r-1}{4}}\int_0^1\frac{\frac{a_1(5-p)}{p+1}(1-s^\frac{p-1}{2})a^\frac{p-r}{2}-\frac{\gamma(5-q)}{q+1}(1-s^\frac{q-1}{2})a^\frac{q-r}{2}+\frac{(5-r)}{r+1}(1-s^\frac{r-1}{2})}{\left(\frac{a_1}{p+1}(1-s^\frac{p-1}{2})a^\frac{p-r}{2}-\frac{\gamma}{q+1}(1-s^\frac{q-1}{2})a^\frac{q-r}{2}+\frac{1}{r+1}(1-s^\frac{r-1}{2})\right)^\frac{3}{2}}\\
&=\frac{C(\omega,\gamma)(r+1)^\frac{1}{2}}{a^\frac{r-1}{4}}\left(\int_0^1\frac{-(5-q)(1-s^\frac{q-1}{2})+(5-r)(1-s^\frac{r-1}{2})}{\left(s^\frac{q-1}{2}-s^\frac{r-1}{2}\right)^{3/2}}+o(1)\right)\\
&=\frac{C(\omega,\gamma)(r+1)^\frac{1}{2}}{a^\frac{r-1}{4}}\left(2\frac{7-2q-r}{r-q}B\left(\frac{7-3q}{2(r-q)},\frac{1}{2}\right)+o(1)\right)
\end{align*}}
where $o(1) \to 0$ as $a \to \infty$, and
the last equality is from Lemma \ref{lem:2pow}. Since $\gamma a^\frac{q-r}{2}\to \frac{q+1}{r+1}$, $U'(a)$ is $-\Theta(a^\frac{r-1}{2})$,
 and $C(\omega,\gamma) = \Theta(a^\frac{3-r}{2})$. Thus, for $q<\frac{7}{3}$,
\begin{align*}
J(\omega,\gamma) = (7-2q-r)\Theta(a^\frac{7-3r}{4}).
\end{align*}
If $q\geq \frac{7}{3}$, then (\ref{pqrstab}) is uniformly integrable at $1$, but not at $0$. Since the numerator of the integrand is negative for $s$ close to $0$, we have $J(\omega,\gamma)\to -\infty$ as $\gamma\to \infty$ in this case.
 This proves part \ref{case:FFgamlrg}.

For large $-\gamma$ and fixed $\omega$, by Lemma \ref{lem:agamma}, we may equivalently consider the limit as $a\to 0$ for fixed $\omega$. As $\omega = F_1(a)$, we have $-\frac{\gamma}{q+1}a^\frac{q-1}{2}\to \frac{\omega}{2}$ as $a\to 0$. Thus, for $q\neq 5$,
\begin{align*}
J(\omega,\gamma) &=  C(\omega,\gamma)\int_0^1\frac{\frac{a_1(5-p)}{p+1}(1-s^\frac{p-1}{2})a^\frac{p-1}{2}-\frac{\gamma(5-q)}{q+1}(1-s^\frac{q-1}{2})a^\frac{q-1}{2}+\frac{a_3(5-r)}{r+1}(1-s^\frac{r-1}{2})a^\frac{r-1}{2}}{\left(\frac{a_1}{p+1}(1-s^\frac{p-1}{2})a^\frac{p-1}{2}-\frac{\gamma}{q+1}(1-s^\frac{q-1}{2})a^\frac{q-1}{2}+\frac{a_3}{r+1}(1-s^\frac{r-1}{2})a^\frac{r-1}{2}\right)^\frac{3}{2}}ds\\
&=\frac{\sqrt{2}(5-q)C(\omega,\gamma)}{\sqrt{\omega}}\left(\int_0^1(1-s^\frac{q-1}{2})^{-\frac{1}{2}}ds+o(1)\right)
\end{align*}
where $o(1) \to 0$ as $a \to 0$.
For $q=5$, factoring out $a^\frac{p-1}{2}$ from the numerator gives
\begin{align*}
J(\omega,\gamma) = \frac{a_1(5-p)a^\frac{p-1}{2}C(\omega,\gamma)}{(p+1)(\omega/2)^\frac{3}{2}}\left(\int_0^1\frac{(1-s^\frac{p-1}{2})}{(1-s^\frac{q-1}{2})^\frac{3}{2}}ds + o(1)\right)
\end{align*}
As $\gamma a^\frac{q-1}{2}\to \frac{-(q+1)\omega}{2}$, we have 
\begin{align*}
    U'(a) = \omega - a_1a^\frac{p-1}{2}+\gamma a^\frac{q-1}{2}-a_3a^\frac{r-1}{2} = -\Theta(1)
\end{align*}
as $a\to 0$. Thus $C(\omega,\gamma)= \Theta(a)$, and so, for $q\neq 5$,
\begin{align*}
J(\omega,\gamma) = (5-q)\Theta(a)
\end{align*}
and for $q=5$
\begin{align*}
J(\omega,\gamma) = a_1\Theta(a^\frac{p+1}{2}).
\end{align*}
This proves part \ref{case:FFgamsml}.
\end{proof}

Part \ref{case:FFomlrg} of the proposition above shows that, for $r>5$, there is a function $\omega_-(\gamma)$ such that $J(\omega,\gamma)<0$ for all $\omega>\omega_-(\gamma)$. The following proposition shows when this bound can be made uniform in $\gamma$. Note that Proposition \ref{prop:FFlimits} part \ref{case:FFgamsml} shows that there is no uniform bound when $q\leq 5$.
\begin{proposition}\label{prop:FFbddstab}
If $q>5$, then there is an $\omega_->0$ such that $J(\omega,\gamma)<0$ for all $\omega>\omega_-$, $\gamma\in\R$.
\end{proposition}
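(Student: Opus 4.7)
My plan is to prove that the numerator $N(a,s)$ of the integrand in \eqref{pqrstab} satisfies $N(a,s)<0$ on $s\in(0,1)$ for every admissible $a$, once $\omega$ is large enough and \emph{uniformly in} $\gamma$. Since $D(a,s)>0$ on $[0,1)$ and the prefactor $C$ in Lemma \ref{lem:pqrstab} is positive, this gives $J(\omega,\gamma)<0$. The first move is to eliminate $\gamma$ from $N$. Solving $F_1(a)=\omega$ for $\gamma\,a^{(q-1)/2}/(q+1)=a^{(p-1)/2}/(p+1)+a^{(r-1)/2}/(r+1)-\omega/2$ and substituting into \eqref{ND.def} (with $a_1=a_3=1$), one obtains after collecting terms
\[
N(a,s)=\psi_1(s)\,a^{\frac{p-1}{2}}+\psi_2(s)\,a^{\frac{r-1}{2}}-\tfrac{q-5}{2}\bigl(1-s^{(q-1)/2}\bigr)\,\omega,
\]
where $\psi_1(s)=[(5-p)(1-s^{(p-1)/2})+(q-5)(1-s^{(q-1)/2})]/(p+1)$ and $\psi_2(s)=[(q-5)(1-s^{(q-1)/2})+(5-r)(1-s^{(r-1)/2})]/(r+1)$. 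Since $q>5$, the $\omega$-term is strictly negative on $(0,1)$. Moreover, $(r+1)\psi_2(s)=-(r-q)-(q-5)s^{(q-1)/2}+(r-5)s^{(r-1)/2}$ has coefficient signs $-,-,+$, so Lemma \ref{lem:RoS} (Descartes) gives at most one positive zero; that zero is $s=1$, and $\psi_2(0)<0$ then forces $\psi_2<0$ on $(0,1)$. Hence the only term of $N$ that can be positive is $\psi_1(s)\,a^{(p-1)/2}$.

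I dominate it by splitting on the size of $a^{(r-1)/2}$ versus $\omega$. Define
\[
M_1:=\sup_{s\in(0,1)}\frac{|\psi_1(s)|}{1-s^{(q-1)/2}},\qquad M_2:=\sup_{s\in(0,1)}\frac{|\psi_1(s)|}{|\psi_2(s)|},
\]
both finite: at $s=0$ the quotients have explicit finite values (controlled via Lemma \ref{lem:lineq}), and at $s=1$ the three expressions $\psi_1,\psi_2,1-s^{(q-1)/2}$ all vanish linearly in $(1-s)$ by Taylor expansion, so the ratios extend continuously to the closed interval. If $a^{(r-1)/2}\le\omega$, then $a^{(p-1)/2}\le\omega^{(p-1)/(r-1)}$ and
\[
|\psi_1(s)|\,a^{(p-1)/2}\le M_1\bigl(1-s^{(q-1)/2}\bigr)\omega^{(p-1)/(r-1)}\le\tfrac{q-5}{4}\bigl(1-s^{(q-1)/2}\bigr)\omega
\]
whenever $\omega^{(r-p)/(r-1)}\ge 4M_1/(q-5)$; keeping $\psi_2\,a^{(r-1)/2}\le 0$ as a bonus yields $N\le-\tfrac{q-5}{4}(1-s^{(q-1)/2})\omega<0$. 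If instead $a^{(r-1)/2}>\omega$, then $a^{(r-p)/2}>\omega^{(r-p)/(r-1)}$, so when $\omega^{(r-p)/(r-1)}\ge 2M_2$,
\[
|\psi_1(s)|\,a^{(p-1)/2}\le M_2|\psi_2(s)|\,a^{(p-1)/2}\le\tfrac12|\psi_2(s)|\,a^{(r-1)/2},
\]
giving $N\le\tfrac12\psi_2\,a^{(r-1)/2}-\tfrac{q-5}{2}(1-s^{(q-1)/2})\omega<0$.

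Taking $\omega_-$ larger than both thresholds above, and also larger than $\omega^*(\gamma_1)$ so that Proposition \ref{prop:Rex} guarantees $(\omega,\gamma)\in R_{\ex}$ for every $\gamma\in\R$, ensures $N(a,s)<0$ on $(0,1)$ for every $\omega>\omega_-$ and every admissible $a$, whence $J(\omega,\gamma)<0$. \emph{The main obstacle} is the third step: verifying finiteness of $M_1$ and $M_2$ at $s=1$, where numerators and denominators vanish simultaneously. This is handled by first-order Taylor expansion of $\psi_1,\psi_2$, and $1-s^{(q-1)/2}$ around $s=1$; the relevant linear coefficients are nonzero precisely because $q,r>5$, and Lemma \ref{lem:lineq} controls the monotone quotients on the interior.
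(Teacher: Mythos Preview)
Your proof is correct and takes a genuinely different route from the paper's.

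The paper fixes an arbitrary $\omega_1>0$ and splits $\gamma$ into three regimes. For $\gamma\to-\infty$ it uses the asymptotic $\gamma a^{(q-1)/2}\to-(q+1)\omega_1/2$ to force $N(a(\omega_1,\gamma),s)<0$, then invokes Descartes' rule (Lemma~\ref{lem:RoS}) in the variable $a$ to propagate negativity to all $\omega>\omega_1$. For $\gamma\to+\infty$ it uses $\gamma a^{(q-r)/2}/(q+1)\to 1/(r+1)$ and a similar Descartes argument applied to the last two terms of $N$. Finally, on the remaining compact $\gamma$-interval it uses $a\to\infty$ as $\omega\to\infty$ to pick $\omega_2$ so that the $A_r$ term dominates.

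Your argument is more uniform: substituting the constraint $F_1(a)=\omega$ to eliminate $\gamma$ outright turns $N$ into a three-term expression in $(a,\omega,s)$ in which two terms are manifestly nonpositive on $(0,1)$ (the $\omega$-term by $q>5$, the $\psi_2$-term by a single Descartes check). The only potentially positive piece, $\psi_1(s)a^{(p-1)/2}$, is then absorbed by whichever of the two negative terms is larger, via the dichotomy $a^{(r-1)/2}\lessgtr\omega$. This sidesteps the paper's three-way casework in $\gamma$ and the repeated use of Descartes in $a$, at the cost of needing the two auxiliary constants $M_1,M_2$ and the endpoint analysis at $s=1$ (which you handle correctly: the linear coefficient of $\psi_2$ at $s=1$ is $\tfrac{1}{2(r+1)}[(q-5)(q-1)-(r-5)(r-1)]$, strictly negative since $x\mapsto(x-5)(x-1)$ is increasing on $[5,\infty)$ and $5<q<r$).

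One cosmetic remark: you do not actually need to enlarge $\omega_-$ beyond $\omega^*(\gamma_1)$ for the argument itself, since your inequality $N<0$ holds for \emph{every} $a>0$ once $\omega$ exceeds the two thresholds; the extra condition only serves to guarantee that $J(\omega,\gamma)$ is defined (i.e.\ $(\omega,\gamma)\notin\Gamma_{\nex}$) for every $\gamma$, which the paper leaves implicit.
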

\begin{proof}
Fix any $\omega_1>0$.
We first show that 
there is a $\gamma_-\in\R$ such that $J(\omega,\gamma)<0$ for all $\omega>\omega_1$ and $\gamma<\gamma_-$. If $p\geq5$, then {\crm$N(a,s)$} is negative for all $\gamma<0$ and $\omega>0$, so the claim follows. Suppose $p<5<q<r$. As in the proof of Proposition \ref{prop:FFlimits}, we have $\gamma a(\omega_1,\gamma)^\frac{q-1}{2}\to \frac{-(q+1)\omega_1}{2}$ and $a(\omega_1,\gamma)\to 0$ as $\gamma\to -\infty$. Hence, there is a $\gamma_-<0$ such that the second term in
\begin{align*}
    {\crm N(a(\omega_1,\gamma),s)} = a_1\frac{5-p}{p+1}(1-s^\frac{p-1}{2})a(\omega_1,\gamma)^\frac{p-1}{2}&-\gamma\frac{5-q}{q+1}(1-s^\frac{q-1}{2})a(\omega_1,\gamma)^\frac{q-1}{2}
    \\
    & +a_3\frac{5-r}{r+1}(1-s^\frac{r-1}{2})a(\omega_1,\gamma)^\frac{r-1}{2}
\end{align*}
dominates, $N(a(\omega_1,\gamma),s)<0$, for all $s\in(0,1)$ and $\gamma<\gamma_-$. Fixing $\gamma<\gamma_-$ and $s\in(0,1)$, we have $N(a,s)=\Theta(a^\frac{p-1}{2})>0$ as $a\to 0$ and $N(a(\omega_1,\gamma),s)<0$. Since the last two terms in $N(a,s)$ have negative coefficients, by Lemma \ref{lem:RoS}, $N(a,s)$ cannot change signs twice for $a\in (0,\infty)$. Hence $N(a,s)<0$ for all $a>a(\omega_1,\gamma)$, and hence (using $a$ is increasing in $\omega$),
 $N(a(\omega,\gamma),s)<0$ for all $\omega>\omega_1$ and $s\in (0,1)$. Therefore $J(\omega,\gamma)<0$ for all $\omega>\omega_1$, which proves the claim.

Similarly, we show that, for the same fixed $\omega_1>0$, there is a $\gamma_+>0$ such that $J(\omega,\gamma)<0$ for all $\omega>\omega_1 $ and $\gamma>\gamma_+$. As in the proof of Proposition \ref{prop:FFlimits}, we have $\frac{\gamma}{q+1}a^\frac{q-r}{2}\to\frac{1}{r+1}$ as $a\to \infty$ for fixed $\omega_1$. 
Since $a(\omega_1,\gamma)$ is increasing in $\gamma$, it follows that there is a $\gamma_0>0$ such that $a^\frac{r-q}{2}>\gamma$ for all $\gamma>\gamma_0$. Using the fact that $1-s^\frac{q-1}{2}<1-s^\frac{r-1}{2}$, we now have, for $\gamma>\gamma_0$ {\crm and $a=a(\omega_1,\gamma)$,}
\begin{align*}
-\gamma(5-q){\crm A_q(a,s)} + (5-r){\crm A_r(a,s)}&=-\frac{\gamma(5-q)}{q+1}(1-s^\frac{q-1}{2})a^\frac{q-1}{2}+\frac{5-r}{r+1}(1-s^\frac{r-1}{2})a^\frac{r-1}{2}\\
&<\left(\frac{5-r}{r+1}-\frac{5-q}{q+1}\right)(1-s^\frac{r-1}{2})a^\frac{r-1}{2}.
\end{align*}
For fixed $s\in (0,1)$, $\gamma>\gamma_0$, and considering {\crm$-\gamma(5-q)A_q(a,s) + (5-r)A_r(a,s)$} as a function of $a$, we also have {\crm $-\gamma(5-q)A_q(a,s) + (5-r)A_r(a,s)=\Theta(a^\frac{q-1}{2})$} as $a\to 0$. By Lemma \ref{lem:RoS}, $-\gamma(5-q)A_q(a,s) + (5-r)A_r(a,s)$ changes sign only once for $a\in (0,\infty)$, so we have $-\gamma(5-q)A_q(a,s) + (5-r)A_r(a,s)<0$ for all $a>a(\omega_1,\gamma)$. Since this holds for all $s\in (0,1)$, we now have $-\gamma(5-q)A_q(a,s) + (5-r)A_r(a,s)<0$ for all $s\in (0,1)$, $\omega>\omega_1$, and $\gamma>\gamma_0$. If $p\geq 5$, then $(5-p)A_p(a,s)\leq 0$, so this suffices to show that $J(\omega,\gamma)<0$ for all $\omega>\omega_1$ and $\gamma>\gamma_+=\gamma_0$.

Now suppose $p<5$. Since $a\to\infty$ as $\gamma\to\infty$, we can find $\gamma_+>\gamma_0$ such that
\begin{align*}
    \left(\frac{5-r}{r+1}-\frac{5-q}{q+1}\right)(1-s^\frac{r-1}{2})a(\omega_1,\gamma)^{\frac{r-1}2}<-(5-p)A_p(a(\omega_1,\gamma)s)
\end{align*}
for all $s\in (0,1)$ and $\gamma  > \gamma_+$. For fixed $s\in (0,1)$ and $\gamma>\gamma_+$, $N(a,s)=\Theta(a^\frac{p-1}{2})>0$ for small $a>0$, and, by Lemma \ref{lem:RoS}, changes sign only once for $a\in (0,\infty)$. The numerator is therefore negative for all $s\in (0,1)$ and $a>a(\omega_1,\gamma)$. Hence $J(\omega,\gamma)<0$ for all $\omega>\omega_1$ and $\gamma>\gamma_+$.

Now consider $J(\omega,\gamma)$ for $\gamma\in [\gamma_-,\gamma_+]$. Since $a\to\infty$ as $\omega \to\infty$, there is a $\omega_2$ such that
\begin{align*}
    {\crm (5-r)A_r(a(\omega_2,\gamma),s)<-(5-p)A_p(a(\omega_2,\gamma)s)+\gamma(5-q)A_q(a(\omega_2,\gamma),s)}
\end{align*}
for all $s\in (0,1)$, $\gamma\in [\gamma_-,\gamma_+]$, and $\omega>\omega_2$. Hence $J(\omega,\gamma)<0$ for all $\omega>\omega_2$, $\gamma\in [\gamma_-,\gamma_+]$, and hence $J(\omega,\gamma)<0$ for all $\omega>\omega_- = \max\{\omega_1,\omega_2\}$, $\gamma\in\R$.
\end{proof}

\section{Theorems for the FD Case}\label{S7}

{\crm In this section we prove results for the FD case with $a_1=1$ and $a_3=-1$.}

By Proposition \ref{prop:Gamlimits}, we have {\crm$\lim_{\omega\to \omega_0^-} J(\omega,\gamma_0)=\infty$} and {\crm$\lim_{\gamma\to \gamma_0^-} J(\omega_0,\gamma)=\infty$} for any {\crm$(\omega_0,\gamma_0)\in\Gamma_{ne}$ (which has no endpoint in the FD case)}. The limits for small $\omega$ and large $-\gamma$ are computed in the same way as the FF case.
\begin{proposition}\label{prop:FDlimits}
The limits of $J(\omega,\gamma)$ for $\omega\to 0$ and $\gamma\to -\infty$ are as follows:
\begin{enumerate}
\item 
\begin{enumerate}
\item
If $p>5$, then $\lim_{\omega\to 0} J(\omega,\gamma)=-\infty$ for all $\gamma\in \R$. 
{\crm
\item
If $p=5$, $\gamma\neq 0$, there are four cases:
\begin{enumerate}
    \item If $q>9$, then $\lim_{\omega\to 0}J(\omega,\gamma)=0^{\sign(\gamma)}$.
    \item If $q=9$ and $\gamma>0$, then $\lim_{\omega\to 0}J(\omega,\gamma)\in (0,\infty)$.
    \item If $q=9$ and $\gamma<0$, then $\lim_{\omega\to 0}J(\omega,\gamma)\in (-\infty,0)$.
    \item If $q<9$, then $\lim_{\omega\to 0}J(\omega,\gamma)=\sign(\gamma)\infty$.
\end{enumerate}
\item 
If $p=5$, $\gamma=0$, there are three cases:
\begin{enumerate}
    \item If $r>9$, then $\lim_{\omega\to 0}J(\omega,\gamma)=0^+$.
    \item If $r=9$, then $\lim_{\omega\to 0}J(\omega,\gamma)\in(0,\infty)$.
    \item If $r<9$, then $\lim_{\omega\to 0}J(\omega,\gamma)=\infty$.
\end{enumerate}
}
\item
If $\frac{7}{3}<p<5$, then $\lim_{\omega\to0} J(\omega,\gamma) =\infty$ for all $\gamma\in \R$. 
\item
If $p=\frac{7}{3}$, then $\lim_{\omega\to0} J(\omega,\gamma)\in (0,\infty)$ for all $\gamma\in \R$. 
\item
If $p<\frac{7}{3}$, then $\lim_{\omega\to0} J(\omega,\gamma)=0^+$ for all $\gamma\in \R$. 
\end{enumerate}
\item
\begin{enumerate}
\item If $q\leq5$, then $\lim_{\gamma\to-\infty}J(\omega,\gamma)=0^+$ for all $\omega>0$. 
\item If $q>5$, then $\lim_{\gamma\to-\infty} J(\omega,\gamma)= 0^-$ for all $\omega>0$.
\end{enumerate}
\end{enumerate}
\end{proposition}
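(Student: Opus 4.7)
The plan is to follow the same template as the proof of Proposition \ref{prop:FFlimits}, since in the FD case ($a_1=1$, $a_3=-1$) both limits $\omega \to 0^+$ (with $\gamma$ fixed) and $\gamma \to -\infty$ (with $\omega$ fixed) force $a \to 0^+$: the first by Lemma \ref{lem:aomega} applied to the F{*} case, the second by Lemma \ref{lem:agamma}. In this regime, only the leading power with its coefficient matters, and since $a_1=1$ is identical to the FF case, the computations for part 1 case (1f), (1d), (1e) and for parts (1a)--(1c) are verbatim copies of the FF argument; the only sign to re-check is the $p=5,\gamma=0$ subcase, where the dominant post-cancellation term involves $a_3=-1$ and therefore flips sign relative to FF, matching the stated $0^+,(0,\infty),+\infty$ trichotomy.

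For part 1, I would factor $a^{(p-1)/2}$ out of both numerator and denominator in \eqref{pqrstab}. As $a\to 0^+$, the numerator becomes $\frac{(5-p)}{p+1}(1-s^{(p-1)/2})a^{(p-1)/2}(1+o(1))$ when $p\ne 5$, and the denominator becomes $\left(\frac{1}{p+1}(1-s^{(p-1)/2})a^{(p-1)/2}\right)^{3/2}(1+o(1))$. Using $F_1(a)=\omega$ to write $U'(a)=\omega - a^{(p-1)/2}+\gamma a^{(q-1)/2}+a^{(r-1)/2} = -\Theta(a^{(p-1)/2})$ gives $C(\omega,\gamma)=\Theta(a^{(3-p)/2})$, so altogether $J(\omega,\gamma)=(5-p)\,\Theta(a^{(7-3p)/4})$, from which (1a), (1d), (1e), (1f) follow. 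For $p=5,\gamma\ne 0$, the $p$-term in the numerator vanishes, leaving the $q$-term as leading, giving $J=-\gamma(5-q)\,\Theta(a^{(q-9)/2})$; for $p=5,\gamma=0$, one more cancellation yields $J=a_3(5-r)\,\Theta(a^{(r-9)/2})=-(5-r)\,\Theta(a^{(r-9)/2})=(r-5)\,\Theta(a^{(r-9)/2})$, which is positive and of the correct order to produce exactly the trichotomy in the $p=5,\gamma=0$ subcase.

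For part 2, I fix $\omega>0$ and let $\gamma\to -\infty$; Lemma \ref{lem:agamma} gives $a\to 0$, and the relation $F_1(a)=\omega$ forces $-\frac{\gamma}{q+1}a^{(q-1)/2}\to \frac{\omega}{2}$. The $p$ and $r$ contributions in the numerator and denominator of \eqref{pqrstab} are then $o(1)$ relative to the $q$ contribution, so for $q\ne 5$
\begin{equation*}
J(\omega,\gamma) = \frac{\sqrt{2}\,(5-q)\,C(\omega,\gamma)}{\sqrt{\omega}}\left(\int_0^1 (1-s^{(q-1)/2})^{-1/2}\,ds + o(1)\right),
\end{equation*}
while $U'(a)=-\Theta(1)$ and hence $C=\Theta(a)$, yielding $J=(5-q)\,\Theta(a)$. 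This immediately gives $J\to 0^+$ when $q<5$ and $J\to 0^-$ when $q>5$. For the borderline $q=5$, factoring $a^{(p-1)/2}$ from the numerator instead shows $J=\Theta(a^{(p+1)/2})\to 0^+$, which is still consistent with $q\le 5$.

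The main obstacle is essentially bookkeeping: one has to be careful that in the $p=5,\gamma=0$ subcase the negative sign of $a_3$ reverses the qualitative behavior from FF to FD, and that the subleading-term expansions are rigorously $o(1)$ in $s$ uniformly on $[0,1]$. Uniformity is ensured, as in the FF proof, by Lemma \ref{lem:lineq} applied to the ratios $(1-s^{(l-1)/2})/(1-s^{(p-1)/2})$ for $l=q,r$, so no new analytical difficulty arises. Everything else is a routine transcription of the FF case.
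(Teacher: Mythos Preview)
Your proposal is correct and matches the paper's approach exactly: the paper gives no separate proof for Proposition \ref{prop:FDlimits} beyond the sentence ``The limits for small $\omega$ and large $-\gamma$ are computed in the same way as the FF case,'' and your write-up is precisely that transcription, including the one place (the $p=5$, $\gamma=0$ subcase) where $a_3=-1$ flips the sign relative to FF. Your bookkeeping on that flip and on the $q=5$ borderline in part 2 is accurate.
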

Since the limits of $J$ close to the nonexistence curve are positive, the stable region is nonempty for all $1<p<q<r$. By Proposition \ref{prop:FDlimits} above, the unstable region is nonempty when $q>5$. Conversely, we can show that unstable region is empty for $q\leq 5$.
\begin{proposition}\label{prop:FDallstab}
If $q\leq 5$, then $J(\omega,\gamma)>0$ for all $(\omega,\gamma)\in R_{\ex}$.
\end{proposition}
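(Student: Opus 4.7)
The plan is to show that the integrand in the Iliev--Kirchev formula from Lemma \ref{lem:pqrstab} is pointwise positive. In the FD case we have $a_1=1$ and $a_3=-1$, so with the notation \eqref{ND.def} the integrand of $J(\omega,\gamma)$ is $N(a,s)/D(a,s)^{3/2}$ with
\[
N(a,s) = (5-p)A_p(a,s)-\gamma(5-q)A_q(a,s)-(5-r)A_r(a,s),\qquad D(a,s) = A_p(a,s)-\gamma A_q(a,s) - A_r(a,s).
\]
The prefactor $C(\omega,\gamma) = -a/(4\sqrt{2}\,U'(a))$ is strictly positive on $R_{\ex}$ since $U'(a)<0$ there, and Lemma \ref{lem:pqrstab} already records that $D(a,s)>0$ for $s\in[0,1)$. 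So the result reduces to showing $N(a,s)>0$ for $s\in(0,1)$ whenever $q\le 5$.

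First I would carry out the one-line algebraic manipulation that is the heart of the argument: decompose $5-p=(5-q)+(q-p)$ and $5-r=(5-q)-(r-q)$ in the definition of $N$. Collecting the $(5-q)$ pieces into a multiple of $D$ yields the identity
\[
N(a,s) = (5-q)\,D(a,s) + (q-p)\,A_p(a,s) + (r-q)\,A_r(a,s).
\]
Each of the three summands is nonnegative when $q\le 5$: the coefficient $5-q\ge 0$ is the hypothesis, $D(a,s)>0$ for $s\in[0,1)$ as noted above, and $A_p(a,s),A_r(a,s)>0$ for $s\in(0,1)$ directly from the definition $A_l(a,s)=\tfrac{1-s^{(l-1)/2}}{l+1}a^{(l-1)/2}$. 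Moreover $q-p>0$ and $r-q>0$ strictly, so the last two terms are strictly positive on $(0,1)$, giving $N(a,s)>0$ on $(0,1)$.

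Combining, the integrand $N/D^{3/2}$ is strictly positive on $(0,1)$, so $J(\omega,\gamma)=C\int_0^1 N/D^{3/2}\,ds>0$ for every $(\omega,\gamma)\in R_{\ex}$, which is the claim. The only substantive step is spotting the decomposition above; there is no real obstacle, since once the identity is written the sign of each term is automatic from the hypothesis $q\le 5$, the ordering $p<q<r$, and the positivity of $D$ on $R_{\ex}$ that comes directly from the definition of $a$ as the smallest positive zero of $U$.
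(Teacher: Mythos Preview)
Your proof is correct and considerably simpler than the paper's. The paper argues by cases on the signs of $\gamma$ and of $5-r$: in each case it bounds $N(a,s)/(1-s^{(l-1)/2})$ for a suitable $l$ via the monotonicity estimate of Lemma~\ref{lem:lineq}, evaluates the resulting expression at $a=a^*(\gamma)$ using the parametrisation of $\Gamma_{\nex}$ from Proposition~\ref{prop:Rex}, and then invokes Descartes' rule of signs (Lemma~\ref{lem:RoS}) to propagate positivity from $a=a^*$ and $a\to 0^+$ to the whole interval $(0,a^*)$. Your single identity
\[
N(a,s) = (5-q)\,D(a,s) + (q-p)\,A_p(a,s) + (r-q)\,A_r(a,s)
\]
bypasses all of this: no case split, no use of Lemma~\ref{lem:lineq}, no appeal to the nonexistence curve, and no rule-of-signs argument. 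The cost is zero and the gain is transparency; in particular your argument makes it evident that the conclusion depends only on $q\le 5$, $p<q<r$, and the positivity of $D$ (equivalently of $U(as)/(as)$ on $[0,1)$), with no separate role for the sign of $\gamma$ or the position of $r$ relative to $5$.
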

\begin{proof}
For any $\gamma\in \R$, let $a^*(\gamma) = a(\omega^*(\gamma),\gamma)$, so that $U'(a^*(\gamma))=0$. First suppose $r\leq5$ and fix $\gamma>0$. Using the fact that $\frac{1-s^\frac{l-1}{2}}{1-s^\frac{p-1}{2}}\leq\frac{l-1}{p-1}$ for all $s\in (0,1)$ and $l=q,r$, we have
\[
\frac{N(a,s)}{1-s^\frac{p-1}{2}}\geq\frac{1}{p-1}\left(\frac{(5-p)(p-1)}{p+1}a^\frac{p-1}{2}-\gamma\frac{(5-q)(q-1)}{q+1}a^\frac{q-1}{2}-\frac{(5-r)(r-1)}{r+1}a^\frac{r-1}{2}\right).\\
\]
For $a=a^*$, eliminating $\gamma$ using the parameterization in Proposition \ref{prop:Rex} gives
\begin{align*}
\frac{N(a^*,s)}{1-s^\frac{p-1}{2}}\geq \frac{1}{p-1}\left(\frac{p-1}{p+1}(q-p){a^*}^\frac{p-1}{2}+\frac{r-1}{r+1}(r-q){a^*}^\frac{r-1}{2}\right)>0.
\end{align*}
We also have $N(a,s)>0$ for small $a>0$. By Lemma \ref{lem:RoS}, $N(a,s)$ changes sign at most once for $a\in (0,a^*)$, so it follows that $N(a,s)\geq 0$ for all $a\in (0,a^*)$. Since this holds for each $s\in (0,1)$, $J(\omega,\gamma)>0$ for all $\omega<\omega^*(\gamma)$.

Next, suppose $\gamma>0$ and $r>5$. Since $r>5$, it suffices to show that
the first two terms of $N(a,s)$ in \eqref{ND.def}
\[
N_{1,2}(a,s) :=
\frac{5-p}{p+1}(1-s^\frac{p-1}{2})a^\frac{p-1}{2}-\gamma\frac{5-q}{q+1}(1-s^\frac{q-1}{2})a^\frac{q-1}{2}>0
\]
for all $s\in (0,1)$. Indeed, using $\frac{1-s^\frac{q-1}{2}}{1-s^\frac{p-1}{2}}\leq \frac{q-1}{p-1}$ and the parameterization from Proposition \ref{prop:Rex},
\begin{align*}
\frac{N_{1,2}(a^*,s)}{1-s^\frac{p-1}{2}} &>\frac{5-p}{p+1}{a^*}^\frac{p-1}{2}-\gamma\frac{(5-q)(q-1)}{(q+1)(p-1)}{a^*}^\frac{q-1}{2}\\
&=\frac{q-p}{p+1}{a^*}^\frac{p-1}{2}+\frac{(5-q)(r-1)}{(p-1)(r+1)}a^\frac{r-1}{2}>0
\end{align*}
Since $N_{1,2}(a,s)$ is also positive for small $a$ and $N_{1,2}(a,s)$ and changes sign at most once for $a\in (0,a^*)$, we have $N_{1,2}(a,s)>0$ for all $a\in (0,a^*)$. Hence $J(\omega,\gamma)>0$ for $\omega<\omega^*(\gamma)$.

Now suppose $\gamma\leq 0$. If $r\geq5$, then each term in $N(a,s)$ is positive, so we are done. Suppose $r<5$. Since $\frac{1-s^\frac{t-1}{2}}{1-s^\frac{r-1}{2}}\geq\frac{t-1}{r-1}$ for $t<r$,
\begin{align*}
\frac{N(a,s)}{1-s^\frac{r-1}{2}}\geq\frac{1}{r-1}\left(\frac{(5-p)(p-1)}{p+1}a^\frac{p-1}{2}-\gamma\frac{(5-q)(q-1)}{q+1}a^\frac{q-1}{2}-\frac{(5-r)(r-1)}{r+1}a^\frac{r-1}{2}\right).
\end{align*}
Using the parameterization in Proposition \ref{prop:Rex}, we have
\begin{align*}
\frac{N(a^*,s)}{1-s^\frac{r-1}{2}}\geq\frac{1}{r-1}\left(\frac{p-1}{p+1}(q-p){a^*}^\frac{p-1}{2}+\frac{r-1}{r+1}(r-q){a^*}^\frac{r-1}{2}\right)>0.
\end{align*}
As in the case for $\gamma>0$, $N(a,s)>0$ for small $a$ and $N(a,s)$ changes sign at most once for $a\in (0,a^*)$. It follows that $N(a,s)\geq 0$ for all $a\in (0,a^*)$, $s\in (0,1)$, and hence $J(\omega,\gamma)>0$ for all $\omega\in (0,\omega^*(\gamma))$.
\end{proof}

\section{Theorems for the DF Case}\label{S8}

{\crm In this section we prove results for the DF case with $a_1=-1$ and $a_3=1$.}

The limits of $J(\omega,\gamma)$ for $\omega\to \infty$ and $\gamma\to \pm \infty$ are calculated in the same way as the FF case.
\begin{proposition}\label{prop:DFlimits}
The limits of $J(\omega,\gamma)$ for $\omega\to\infty$ and $\gamma\to -\infty,\infty$ are as follows:
\begin{enumerate}
\item 
\begin{enumerate}
\item 
If $r>5$, then $\lim_{\omega\to \infty} J(\omega,\gamma)=0^-$ for all $\gamma\in \R$. 
\item
 If $r=5$, then $\lim_{\omega\to\infty} J(\omega, \gamma) =0^-$ for $\gamma\geq 0$, and $\lim_{\omega\to 0} J(\omega,\gamma)=0^+ $ for $\gamma< 0$.
\item
If $\frac{7}{3}<r<5$, then $\lim_{\omega\to\infty} J(\omega,\gamma) =0^+$ for all $\gamma\in \R$.
\item
If $r=\frac{7}{3}$, then $\lim_{\omega\to\infty} J(\omega,\gamma)\in (0,\infty)$ for all $\gamma\in \R$.
\item
If $r<\frac{7}{3}$, then $\lim_{\omega\to\infty} J(\omega,\gamma)=\infty$ for all $\gamma\in \R$.

\end{enumerate}
\item
\begin{enumerate}
\item If $r<\frac{7}{3}$, then $\lim_{\gamma\to\infty}J(\omega,\gamma)=\infty$ for all $ \omega>0$. 
\item If $r=\frac{7}{3}$, then $\lim_{\gamma\to\infty}J(\omega,\gamma)\in (0,\infty)$ for all $ \omega>0$. 
\item If $r>\frac{7}{3}$ and $r+2q<7$, then $\lim_{\gamma\to\infty}J(\omega,\gamma)=0^+$ for all $\omega>0$. 
\item If $r+2q=7$ then $\lim_{\gamma\to\infty}J(\omega,\gamma)=0$ for all $\omega>0$. 
\item If $r+2q>7$, then $\lim_{\gamma\to\infty} J(\omega,\gamma)=0^-$ for all $\omega>0$.
\end{enumerate}
\item
\begin{enumerate}
\item If $q<5$, then $\lim_{\gamma\to-\infty}J(\omega,\gamma)=0^+$ for all $\omega>0$. 
\item If $q\geq 5$, then $\lim_{\gamma\to-\infty} J(\omega,\gamma)= 0^-$ for all $\omega>0$.
\end{enumerate}
\end{enumerate}
\end{proposition}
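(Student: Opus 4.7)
The plan is to mirror the proof of Proposition \ref{prop:FFlimits}, substituting $a_1 = -1$ and $a_3 = 1$ throughout. Each of the three parts follows the same strategy: factor an appropriate power of $a$ out of the integrand in Lemma \ref{lem:pqrstab}, use the relation $F_1(a) = \omega$ to determine the balance between $a$ and the varying parameter, and combine the limit of the resulting integral with the prefactor $C(\omega,\gamma) = -a/(4\sqrt{2}\, U'(a))$ to read off the asymptotics of $J$.

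For part 1, Lemma \ref{lem:aomega} gives $a \to \infty$ as $\omega \to \infty$, since DF is a $*$F case. Factoring $a^{(r-1)/2}$ in numerator and denominator, the $p$- and $q$-contributions are $o(1)$, and because $a_3 = 1$ the limit integrand agrees with the FF case. From $F_1(a) = \omega$ one gets $U'(a) = -\Theta(a^{(r-1)/2})$, hence $C = \Theta(a^{(3-r)/2})$ and $J = (5-r)\,\Theta(a^{(7-3r)/4})$ generically. The borderline $r = 5$ is handled by extracting the next-order term; at $r = 5$ with $\gamma = 0$ the flipped sign $a_1 = -1$ produces $J = -(5-p)\,\Theta(a^{(p-9)/2}) \to 0^-$, rather than the FF value $0^+$, explaining the shift in subcase (b) at $\gamma = 0$.

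For part 2, Lemma \ref{lem:agamma} gives $a \to \infty$ as $\gamma \to \infty$, and $F_1(a) = \omega$ with $a_3 = 1$ yields the same balance $\gamma a^{(q-r)/2} \to (q+1)/(r+1)$ as in FF. Factoring $a^{(r-1)/2}$ reduces the limit integrand to the two-power form of Lemma \ref{lem:2pow}, giving $\tfrac{2(7-2q-r)}{r-q}\, B\!\bigl(\tfrac{7-3q}{2(r-q)},\tfrac{1}{2}\bigr)$, whose sign determines cases (c)--(e). For part 3, Lemma \ref{lem:agamma} gives $a \to 0$ as $\gamma \to -\infty$, and $F_1(a) = \omega$ with the DF signs produces $\gamma a^{(q-1)/2} \to -(q+1)\omega/2$. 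Substituting and computing $C = \Theta(a)$ yields $J = (5-q)\,\Theta(a)$ for $q \neq 5$, splitting subcases (a) and (b) by the sign of $5-q$.

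The main technical obstacle is part 2 when $q \geq 7/3$. The pointwise limit integrand then has a non-integrable singularity at $s = 0$, so dominated convergence cannot be applied directly. In the FF proof this subcase is peeled off and gives $J \to -\infty$, whereas the DF statement asserts the finite limit $0^-$ throughout $r + 2q > 7$. To obtain this one must estimate the small-$s$ portion of the integral directly for large but finite $\gamma$, using that $D(a,s)$ stays bounded away from zero near $s = 0$ before the limit is taken and that the prefactor $\Theta(a^{(7-3r)/4})$ decays, in order to show that this contribution vanishes. Throughout, Lemma \ref{lem:lineq} supplies the uniform bounds on ratios $(1-s^{(q-1)/2})/(1-s^{(r-1)/2})$ needed to justify the interchange of limit and integral in the other cases.
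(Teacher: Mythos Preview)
Your approach is exactly the paper's: its entire proof is the sentence ``calculated in the same way as the FF case,'' and you have correctly unpacked what that means, including the sign flip at $r=5$, $\gamma=0$ in part~1(b) (and likewise at $q=5$ in part~3) coming from $a_1=-1$.

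You are also right to flag part~2 with $q\ge 7/3$ as a genuine subtlety that the paper's one-line reference does not address: the FF analogue (Proposition~\ref{prop:FFlimits}, case~3(f)) asserts $J\to-\infty$ there, whereas the DF statement folds this range into the $0^-$ conclusion of~2(e). Your proposed fix is the right idea but needs a quantitative sharpening. Simply observing that $D(a,0)=\omega/2$ stays bounded away from zero is not sufficient, because the region where $D(a,s)\approx\omega/2$ shrinks to an interval of length $s_0\sim a^{-(r-1)/(q-1)}$ while $N(a,s)$ there grows like $a^{(r-1)/2}$, so that contribution alone, multiplied by $C=\Theta(a^{(3-r)/2})$, need not obviously vanish. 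Splitting the integral at $s_0$, estimating $D(a,s)\approx \tfrac{1}{r+1}a^{(r-1)/2}s^{(q-1)/2}$ on $[s_0,\delta]$, and combining gives a total contribution of order $a^{(r-1)/2-(r-1)/(q-1)}$ to the integral; with the prefactor this yields $J\sim a^{1-(r-1)/(q-1)}\to 0^-$ since $r>q$. This computation does not depend on $a_1$, so it applies equally to the FF case---which suggests the $-\infty$ in Proposition~\ref{prop:FFlimits}~3(f) may itself be imprecise.
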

Since $\lim_{\omega\to 0}a(\omega,\gamma)>0$ in the D* cases, the stability of solutions for small $\omega$ is not determined by whether $p<5$. When $p<\frac{7}{3}$, the integral in (\ref{pqrstab}) is uniformly integrable at both endpoints as $\omega\to 0$, so $\lim_{\omega\to 0} J(\omega,\gamma)=J(0,\gamma)$. For $\omega = 0$, we have $\frac{\gamma}{q+1}a^\frac{q-1}{2} = \frac{a_1}{p+1}a^\frac{p-1}{2} +\frac{a_3}{r+1}a^\frac{r-1}{2}$. Using this to eliminate the $\gamma$ dependency in (\ref{pqrstab}) gives,
\begin{align*}
J(0,\gamma)=C\left(\frac{p+1}{a^\frac{p-1}{2}}\right)^\frac{1}{2}\int_0^1\frac{N_1(s)+\beta N_2(s)}{\left(D_1(s)+\beta D_2(s)\right)^\frac{3}{2}}ds
\end{align*}
where $\beta = \frac{p+1}{r+1}a^\frac{r-p}{2}$ and
\begin{align*}
N_1(s)=a_1\left((5-p)(1-s^\frac{p-1}{2})-(5-q)(1-s^\frac{q-1}{2})\right),
\end{align*}
\begin{align*}
N_2(s)=a_3\left((5-r)(1-s^\frac{r-1}{2})-(5-q)(1-s^\frac{q-1}{2})\right),
\end{align*}
\begin{align*}
D_1(s)=a_1(s^\frac{q-1}{2}-s^\frac{p-1}{2}),\quad D_2(s)=a_3(s^\frac{q-1}{2}-s^\frac{r-1}{2}).
\end{align*}
{To determine the sign of the integral, we reformulate an inequality given in Alzer \cite{MR1388887} to get a bound on $\frac{\partial}{\partial x}B(x,1/2)$.}

\begin{lemma}\label{lem:Bxineq}
For all $b>0$, we have
\begin{align*}
-\frac{1}{2b}B(b+\frac{1}{2},\frac{1}{2})<\frac{\partial}{\partial x}B(b+\frac{1}{2},\frac{1}{2})<-\frac{1}{2b+1}B(b+\frac{1}{2},\frac{1}{2}).
\end{align*}
\end{lemma}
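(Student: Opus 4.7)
The plan is to rewrite the beta-function inequality as a digamma inequality and then invoke (in reformulated form) the classical bound of Alzer \cite{MR1388887}. Since $B(x,1/2) = \Gamma(x)\Gamma(1/2)/\Gamma(x+1/2)$, logarithmic differentiation yields
$$\frac{\partial}{\partial x} B(x,1/2) = B(x,1/2)\bigl[\psi(x) - \psi(x+1/2)\bigr],$$
where $\psi = \Gamma'/\Gamma$ is the digamma function. Evaluating at $x = b + 1/2$ and dividing by the positive factor $B(b+1/2,1/2)$, the claim reduces to
$$\frac{1}{2b+1} < \psi(b+1) - \psi(b+1/2) < \frac{1}{2b}, \qquad b > 0.$$
The relevant specialization of Alzer's inequality, $(1-\alpha)/(x+\alpha) < \psi(x+1) - \psi(x+\alpha) < (1-\alpha)/x$ for $x>0,\alpha\in(0,1)$, at $x=b$, $\alpha=1/2$ produces exactly this, and multiplying back by $-B(b+1/2,1/2)$ gives the stated form.

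For a self-contained derivation I would instead start from the standard integral representation $\psi(z) = -\gamma + \int_0^1 (1-t^{z-1})/(1-t)\,dt$, factor $1-t = (1-t^{1/2})(1+t^{1/2})$, and substitute $t = u^2$ to obtain
$$\psi(b+1) - \psi(b+1/2) = \int_0^1 \frac{t^{b-1/2}}{1+t^{1/2}}\,dt = 2\int_0^1 \frac{u^{2b}}{1+u}\,du.$$
The elementary bounds $1/2 < 1/(1+u)$ and $1/(1+u) < 1 - u/2$ hold on $(0,1)$; the second follows from $(1+u)(1-u/2) = 1 + u(1-u)/2 \geq 1$. Integrating them against $u^{2b}$ gives
$$\frac{1}{2b+1} < 2\int_0^1 \frac{u^{2b}}{1+u}\,du < \frac{2}{2b+1} - \frac{1}{2b+2} = \frac{2b+3}{(2b+1)(2b+2)} < \frac{1}{2b},$$
the last step being the elementary $2b(2b+3) < (2b+1)(2b+2)$, i.e., $0 < 2$.

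The only delicate point is securing a tight enough upper bound: the naive estimate $1/(1+u)\le 1$ yields $2/(2b+1)$, which already exceeds $1/(2b)$ for every $b>1/2$, so the sharpening to $1 - u/2$ is essential, and it just barely clears the target (the final inequality collapses to $0<2$). Apart from this choice, the argument is purely bookkeeping with the standard Beta/digamma identities.
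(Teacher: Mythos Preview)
Your proof is correct. Both you and the paper reduce the claim to the digamma inequality $\frac{1}{2b+1} < \psi(b+1) - \psi(b+1/2) < \frac{1}{2b}$, but the routes diverge from there. The paper invokes Alzer's Theorem~7, which sandwiches $\psi(b+1)-\psi(b+s)$ between partial-fraction sums $A_n(s,b)$ and $A_n(s,b)+\delta_n(s,b)$, then specializes to $s=1/2$ and manipulates the resulting telescoping series to extract the bounds $\frac{1}{2b}$ and $\frac{1}{2b+1}$. Your self-contained argument instead passes through the integral representation, reduces everything to $2\int_0^1 u^{2b}/(1+u)\,du$, and uses the pointwise bounds $\frac{1}{2} < \frac{1}{1+u} < 1 - \frac{u}{2}$ on $(0,1)$. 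This is more elementary and fully self-contained; the paper's approach, by contrast, leans on a nontrivial cited inequality and requires a page of series bookkeeping. Your observation that the naive bound $1/(1+u)\le 1$ just barely fails is a nice diagnostic of why the argument works.
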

\begin{proof}
Recall that the beta and gamma functions are related by 
\begin{align*}
B(x,y)=\frac{\Gamma(x)\Gamma(y)}{\Gamma(x+y)}.
\end{align*}
Taking a derivative in $x$ gives
\begin{align*}
\frac{\partial}{\partial x}B(x,y) = B(x,y)\left(\frac{\Gamma'(x)}{\Gamma(x)}-\frac{\Gamma'(x+y)}{\Gamma(x+y)}\right)=B(x,y)\left(\psi(x)-\psi(x+y)\right)
\end{align*}
where $\psi(x) =\Gamma'(x)/\Gamma(x)$ is the digamma function. 
For any integer $n\geq 0$ and any $b>0$, $s\in (0,1)$, {\crm Theorem 7 in Alzer \cite{MR1388887} shows that}
\begin{align*}
    A_n(s,b)<\psi(b+1)-\psi(b+s)<A_n(s,b)+\delta_n(s,b),
\end{align*}
where $\lim_{n\to\infty}\delta_n(s,b)= 0$, and
\begin{align*}
    A_n(s,b)=(1-s)\left[\frac{1}{b+s+n}+\sum_{i=0}^{n-1}\frac{1}{(b+i+1)(b+i+s)}\right].
\end{align*}
In the case $s=1/2$,
\begin{align*}
    A_n(1/2,b)&=\frac{1}{2}\left[\frac{1}{b+n+1/2}+\frac{1}{(b+1)(b+1/2)}+2\sum_{i=1}^{n-1}\left(\frac{1}{b+i+1/2}-\frac{1}{b+i+1}\right)\right]\\
    &<\frac{1}{2}\left[\frac{1}{b+n+1/2}+\frac{1}{(b+1)(b+1/2)}+\sum_{i=1}^{n-1}\left(\frac{1}{b+i}-\frac{1}{b+i+1}\right)\right]\\
    &=\frac{1}{2}\left[\frac{1}{b+n+1/2}+\frac{1}{(b+1)(b+1/2)}+\frac{1}{b+1}-\frac{1}{b+n}\right]\\
    &<\frac{1}{2}\left[\frac{b+3/2}{(b+1)(b+1/2)}\right]<\frac{1}{2b}.
\end{align*}
For the lower bound, we have
\begin{align*}
    A_n(1/2,b)&=\frac{1}{2}\left[\frac{1}{b+n+1/2}+\frac{1}{(b+1)(b+1/2)}+2\sum_{i=1}^{n-1}\left(\frac{1}{b+i+1/2}-\frac{1}{b+i+1}\right)\right]\\
    &>\frac{1}{2}\left[\frac{1}{b+n+1/2}+\frac{1}{(b+1)(b+1/2)}+\sum_{i=1}^{n-1}\left(\frac{1}{b+i+1/2}-\frac{1}{b+i+3/2}\right)\right]\\
    &=\frac{1}{2}\left[\frac{1}{b+n+1/2}+\frac{1}{(b+1)(b+1/2)}+\frac{1}{b+3/2}-\frac{1}{b+n+1/2}\right]\\
    &= \frac{1}{2b+1}+\frac{1}{2}\left(\frac{1}{b+1/2}-\frac{2}{b+1}+\frac{1}{b+3/2}\right)>\frac{1}{2b+1}.
\end{align*}
As $\lim_{n\to\infty}\delta_n(1/2,b)=0$, this shows that
\begin{align*}
\frac{1}{2b+1}<\psi(b+1)-\psi(b+\frac{1}{2})<\frac{1}{2b}.
\end{align*}
Since $\frac{\partial}{\partial x}B(x,y)=(\psi(x)-\psi(x+y))B(x,y)$, the claim follows.
\end{proof}

\begin{proposition}\label{prop:DFJ0pos}
If $2q+r<7$, then $J(0,\gamma)>0$ for all $\gamma\in \R$.
\end{proposition}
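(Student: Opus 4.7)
The plan is to reduce everything to showing that the $\beta$-parameterized integral
\[
G(\beta) := \int_0^1 \frac{N_1(s)+\beta N_2(s)}{(D_1(s)+\beta D_2(s))^{3/2}}\,ds
\]
is strictly positive for every $\beta\in[0,\infty)$. Indeed, the explicit formula immediately preceding the proposition gives $J(0,\gamma) = C(\omega,\gamma)\,\sqrt{(p+1)/a^{(p-1)/2}}\,G(\beta)$, and both prefactors are positive in the existence region, because $C=-a/(4\sqrt{2}\,U'(a))>0$ in view of $U'(a)<0$. From the $\omega=0$ relation $F_1(a)=0$ one checks that as $\gamma$ sweeps $\R$ the quantity $a=a(0,\gamma)$ sweeps $(0,\infty)$, so $\beta=\frac{p+1}{r+1}a^{(r-p)/2}$ sweeps the full interval $(0,\infty)$ and we must control $G$ everywhere.

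First I would record the consequences of $2q+r<7$. Together with $p<q<r$ this forces $p<q<r<7/3$, so each of $5-p,\,5-q,\,5-r$ is positive and each of the sums $2p+q,\,2p+r,\,2q+r$ lies strictly below $7$. Consequently the three two-power Beta integrals produced by Lemma \ref{lem:2pow} on the pairs $(p,q),(p,r),(q,r)$ are all positive, which immediately handles three distinguished values of $\beta$: (i) $G(0)>0$ by Lemma \ref{lem:2pow} on $(p,q)$; (ii) $G(1)>0$, because the $s^{(q-1)/2}$ contributions of $N_1+N_2$ and $D_1+D_2$ cancel and Lemma \ref{lem:2pow} applies to the pair $(p,r)$; (iii) $\sqrt{\beta}\,G(\beta)\to \frac{2(7-2q-r)}{r-q}B(\frac{7-3q}{2(r-q)},\frac{1}{2})>0$ as $\beta\to\infty$ by a dominant-term expansion together with Lemma \ref{lem:2pow} on $(q,r)$.

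To upgrade positivity at these sample values to positivity throughout $(0,\infty)$, I would change variables (for instance $t=s^{(p-1)/2}$) to expose a $(1-t)^{-3/2}$ singularity and then invoke Lemma \ref{lem:BvH} to rewrite $G(\beta)$ as a finite combination of values $B(x,\frac{1}{2})$ with arguments depending explicitly on $p,q,r$ and $\beta$. The hypothesis $2q+r<7$ enters through the positivity of the coefficient in front of the dominant Beta term, while the remaining potentially signed Beta contributions are controlled by the sharp derivative inequality $-B(x,\frac{1}{2})/(2x-1)<\partial_x B(x,\frac{1}{2})<-B(x,\frac{1}{2})/(2x)$ from Lemma \ref{lem:Bxineq}. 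The hard part will be the algebraic bookkeeping: the denominator $(D_1+\beta D_2)^{3/2}$ mixes three distinct powers of $s$, so no single substitution collapses $G(\beta)$ to a pure Beta integral. The argument must therefore split $G$ into a manifestly positive ``leading'' Beta piece and a signed residual whose magnitude is dominated via Lemma \ref{lem:Bxineq}; alternatively, one would try to rule out an interior zero of $G$ by signing $G'(\beta)$ using the same derivative bound, again closing the argument only where $2q+r<7$ holds.
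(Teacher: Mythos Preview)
Your reduction to showing $G(\beta)>0$ for all $\beta\in(0,\infty)$ is correct, and your three sample values $\beta=0,1,\infty$ are fine. The gap is the bridging step: you acknowledge yourself that no substitution collapses the three-power denominator $(D_1+\beta D_2)^{3/2}$ into a single Beta integral, and the proposal to ``split $G$ into a leading Beta piece plus a residual dominated by Lemma~\ref{lem:Bxineq}'' or to ``sign $G'(\beta)$'' is not actually carried out. As it stands there is no argument covering generic $\beta$, so the proof is incomplete.

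The paper's device is different and avoids ever evaluating the three-term integral. One splits the numerator, not the $\beta$-interval: write $G(\beta)=\int_0^1 N_1/(D_1+\beta D_2)^{3/2}+\beta\int_0^1 N_2/(D_1+\beta D_2)^{3/2}$ and show each piece is positive. The key observation is that each $N_i$ has exactly one sign change on $(0,1)$ (negative near $0$, positive near $1$; this follows from Lemma~\ref{lem:RoS}). If $f$ is negative on $(0,c)$ and positive on $(c,1)$, and $\phi>0$ is monotone decreasing, then $\phi(c)\int_0^1 f \ge \int_0^1 f\phi$. One now \emph{chooses} $\phi$ to replace the intractable denominator by a two-term one: for the $N_2$ piece take $\phi=(D_1+\beta D_2)^{3/2}/D_2^{3/2}$, which is decreasing by Lemma~\ref{lem:lineq}, so $\phi(c)\int N_2/(D_1+\beta D_2)^{3/2}\ge \int N_2/D_2^{3/2}$, and the right side is exactly the $(q,r)$ Beta integral of Lemma~\ref{lem:2pow}, positive since $2q+r<7$. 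For the $N_1$ piece the natural $\phi=(D_1+\beta D_2)^{3/2}/D_1^{3/2}$ goes the wrong way, so the paper uses instead $\phi=(D_1+\beta D_2)^{3/2}/(s^{(q-1)/2}-s^{3-q})^{3/2}$ (monotone via Lemma~\ref{lem:lineq} because $3-q>(r-1)/2$ when $2q+r<7$), lands on an integral in the single variable $t=s^{(7-3q)/2}$, expresses it through Lemma~\ref{lem:BvH} as a difference of Beta values, and only then invokes Lemma~\ref{lem:Bxineq} to show that $s\mapsto (4-s)sB(\tfrac{s}{7-3q}+\tfrac12,\tfrac12)$ is increasing, which makes that difference positive. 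This monotone-weight comparison is the missing idea in your proposal.
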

\begin{proof}
Since $p<q<\frac{7}{3}$, $J(0,\gamma)$ is given by
\begin{align}
J(0,\gamma)=C\left(\frac{p+1}{a^\frac{p-1}{2}}\right)^\frac{1}{2}\int_0^1\frac{N_1(s)+\beta N_2(s)}{(D_1(s)+\beta D_2(s))^\frac{3}{2}}ds,\label{J_0}
\end{align}
which is integrable. To show that $J(0,\gamma)>0$, we consider each term $N_1, N_2$ separately. We have $N_1(0)<0$ and $N_1(1) =0$. By Lemma \ref{lem:RoS}, $N_1$ cannot have three positive zeros, so there is a $c\in (0,1]$ such that $N_1(s)\leq 0$ for $s\in [0,c)$ and $N_1(s)>0$ for $s\in (c,1)$. Since both terms $D_1$, $\beta D_2$ in the denominator are positive on $(0,1)$, we then have $\frac{N_1(s)}{(D_1(s)+\beta D_2(s))^\frac{3}{2}}\leq 0$ for $s\in(0,c)$ and $\frac{N_1(s)}{(D_1(s)+\beta D_2(s))^\frac{3}{2}}>0$ for $s\in(c,1)$. Now, if $\phi$ is a function that is positive and decreasing on $(0,1)$, then
\begin{align*}
\frac{N_1(s)}{\left(D_1(s)+\beta D_2(s)\right)^\frac{3}{2}}\phi(c) \geq\frac{N_1(s)}{\left(D_1(s)+\beta D_2(s)\right)^\frac{3}{2}}\phi(s)
\end{align*}
for all $s\in (0,1)$. Let $\phi(s) = \frac{(D_1(s)+\beta D_2(s))^\frac{3}{2}}{(s^\frac{q-1}{2}-s^{3-q})^\frac{3}{2}}$ with $\phi(1)=\lim_{s\to 1^-} \phi(s)$. Since $3-q>\frac{r-1}{2}$ we see that, using Lemma \ref{lem:lineq},
\begin{align*}
\phi'(s) = \frac{3}{2}\left(\phi(s)\right)^\frac{1}{3}\left(\frac{d}{ds}\frac{s^\frac{p-1}{2}-s^\frac{q-1}{2}}{s^\frac{q-1}{2}-s^{3-q}}+\beta\frac{d}{ds}\frac{s^\frac{q-1}{2}-s^\frac{r-1}{2}}{s^\frac{q-1}{2}-s^{3-q}}\right)<0.
\end{align*}
Hence, there is a $c\in (0,1]$ such that
\begin{align*}
\phi(c)\int_0^1\frac{N_1}{(D_1+\beta D_2)^\frac{3}{2}}ds&>\int_0^1\frac{N_1}{(s^\frac{q-1}{2}-s^{3-q})^\frac{3}{2}}ds\\
&=\int_0^1\frac{(5-q)(1-s^\frac{q-1}{2})-(5-p)(1-s^\frac{p-1}{2})}{(s^\frac{q-1}{2}-s^{3-q})^\frac{3}{2}}ds.
\end{align*}
Using a change of variables $t=s^\frac{7-3q}{2}$, we write this integral as
\begin{align*}
&\frac{2}{7-3q}\int_0^1\frac{(5-q)(1-t^\frac{q-1}{7-3q})-(5-p)(1-t^\frac{p-1}{7-3q})}{t^\frac{1}{2}(1-t)^\frac{3}{2}}dt\\
&=\frac{2(5-q)}{7-3q}H\left(\frac{1}{2},\frac{q-1}{7-3q}\right)-\frac{2(5-p)}{7-3q}H\left(\frac{1}{2},\frac{p-1}{7-3q}\right),\\
\end{align*}
and using Lemma \ref{lem:BvH}, this becomes
\begin{align}
\frac{4(5-q)(q-1)}{(7-3q)^2}B\left(\frac{q-1}{7-3q}+\frac{1}{2},\frac{1}{2}\right)-\frac{4(5-p)(p-1)}{(7-3q)^2}B\left(\frac{p-1}{7-3q}+\frac{1}{2},\frac{1}{2}\right).\label{J_0beta}
\end{align}
Now, for fixed $q<\frac{7}{3}$, consider the function $h(s)=(4-s)sB(\frac{s}{7-3q}+\frac{1}{2},\frac{1}{2})$ for $s\in (0,q-1)$. By Lemma \ref{lem:Bxineq},
\begin{align*}
h'(s) &= (4-2s)B(\frac{s}{7-3q}+\frac{1}{2},\frac{1}{2})+(4-s)\frac{s}{7-3q}\frac{\partial B}{\partial x}(\frac{s}{7-3q}+\frac{1}{2},\frac{1}{2})\\
&> B(\frac{s}{7-3q}+\frac{1}{2},\frac{1}{2})(2-\frac{3s}{2})>0
\end{align*}
Hence (\ref{J_0beta}) is positive for any $p\in (1,q)$. As $\phi(c)\geq 0$, this shows that {\crm{$\phi(c)> 0$ and}} the integral of the first term is positive. For the second term, we similarly get a $c\in(0,1]$ such that $N_2(s)\leq0$ for $s\in[0,c)$ and $N_2(s)>0$ for $s\in (c,1)$. Using the positive decreasing function $\phi(s) = \frac{\left(D_1(s)+\beta D_2(s)\right)^\frac{3}{2}}{(D_2(s))^\frac{3}{2}}$, we get
\begin{align*}
\phi(c)\int_0^1\frac{N_2}{(D_1+\beta D_2)^\frac{3}{3}}ds>\int_0^1\frac{(5-r)(1-s^\frac{r-1}{2})-(5-q)(1-s^\frac{q-1}{2})}{(s^\frac{q-1}{2}-s^\frac{r-1}{2})^\frac{3}{2}}ds
\end{align*}
By Lemma \ref{lem:2pow}, the right hand side is positive for $2q+r<7$. Hence both terms are positive, and hence $J(0,\gamma)>0$.
\end{proof}

\begin{proposition}\label{prop:DFJ0neg}
If $2p+q>7$, then $J(0,\gamma)<0$ for all $\gamma\in \R$.
\end{proposition}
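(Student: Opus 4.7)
The plan is to mirror the proof of Proposition~\ref{prop:DFJ0pos}, reversing all monotonicities and signs so as to bound
\[
I(\beta) \;:=\; \int_0^1 \frac{N_1(s) + \beta N_2(s)}{(D_1(s) + \beta D_2(s))^{3/2}} \, ds
\]
from above by a strictly negative quantity for every $\beta > 0$. As a preliminary reduction, if $p \ge 7/3$ the integrand in \eqref{J_0} near $s=0$ behaves like $[(p-q)+\beta(q-r)] s^{-3(p-1)/4}$ with a strictly negative constant, so the integral is $-\infty$ and $J(0,\gamma) = -\infty < 0$. Henceforth assume $p < 7/3$; combined with $p < q$ and $2p+q > 7$, this forces $q > 7/3$. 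Because the prefactor $C(\omega,\gamma)\,[(p+1)/a^{(p-1)/2}]^{1/2}$ is strictly positive, it suffices to prove $I(\beta) < 0$.

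I split $I = I_1 + I_2$ along $N_1$ and $\beta N_2$ and introduce the weight $\psi(s) := [D_1(s)/(D_1(s)+\beta D_2(s))]^{3/2}$. Lemma~\ref{lem:lineq}, applied to $D_2/D_1$ with $(p_1,q_1,p_2,q_2) = ((q-1)/2,(r-1)/2,(p-1)/2,(q-1)/2)$, shows that $D_2/D_1$ is strictly increasing on $(0,1)$, so $\psi$ is strictly positive and strictly decreasing. For $I_1$: Lemma~\ref{lem:RoS} applied to the three-term expression $N_1(s) = (p-q) + (5-p) s^{(p-1)/2} - (5-q) s^{(q-1)/2}$ yields at most two positive zeros, one being $s=1$; since $N_1(0) = p-q < 0$, there is $c_1 \in (0,1]$ with $N_1 \le 0$ on $[0,c_1]$ and $N_1 \ge 0$ on $[c_1,1]$. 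The standard rearrangement with the positive decreasing weight $\psi$ gives
\[
I_1(\beta) = \int_0^1 \frac{N_1(s)}{D_1(s)^{3/2}}\,\psi(s)\, ds \;\le\; \psi(c_1) \int_0^1 \frac{N_1(s)}{D_1(s)^{3/2}}\, ds,
\]
and Lemma~\ref{lem:2pow} evaluates the remaining integral as $\frac{2(7-2p-q)}{q-p}\,B\!\bigl(\frac{7-3p}{2(q-p)},\frac{1}{2}\bigr)$, which is strictly negative by $2p+q>7$ and $p<7/3$. Hence $I_1(\beta) < 0$.

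For $I_2(\beta)$, Lemma~\ref{lem:RoS} applied to $N_2(s) = (q-r) + (5-q)s^{(q-1)/2} + (r-5)s^{(r-1)/2}$ shows the coefficient sequence has more than one sign change only when $q<r<5$; otherwise $N_2$ has no positive zero in $(0,1)$, $N_2 < 0$ throughout, and $I_2(\beta) < 0$ directly. When $q<r<5$, a parity argument using $N_2(0) < 0$, $N_2(1) = 0$, and $\lim_{s \to 1^-} N_2(s)/(1-s) = \frac{1}{2}[(5-r)(r-1) - (5-q)(q-1)]$ still forces $N_2 < 0$ on $(0,1)$ unless this limit is strictly positive. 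The remaining sub-case, in which $N_2$ genuinely changes sign at some $c_2 \in (0,1)$, is the main obstacle. Here I would again apply the decreasing weight $\psi$ to reduce the problem to showing $\int_0^1 N_2(s)/D_1(s)^{3/2}\,ds \le 0$. Using the identity $N_1 + N_2 = -(5-p)(1-s^{(p-1)/2}) + (5-r)(1-s^{(r-1)/2})$, Lemma~\ref{lem:2pow} applied with the indices $(p,q) \mapsto (p,r)$ evaluates $\int_0^1 (N_1+N_2)/(D_1+D_2)^{3/2}\,ds$ as $\frac{2(7-2p-r)}{r-p}\,B\!\bigl(\frac{7-3p}{2(r-p)},\frac{1}{2}\bigr) < 0$, using $2p+r > 2p+q > 7$. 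Combining this with a $t = s^{(3q-7)/2}$ substitution analogous to the $t = s^{(7-3q)/2}$ change of Proposition~\ref{prop:DFJ0pos} (valid here because $q > 7/3$), together with the bounds on $\partial_x B(x,1/2)$ from Lemma~\ref{lem:Bxineq}, should deliver the required inequality. This final estimate is where the principal technical difficulty lies.
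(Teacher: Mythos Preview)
Your treatment of the case $p \ge 7/3$ and of $I_1$ matches the paper's argument (the paper uses the reciprocal weight $\phi = 1/\psi$, which is increasing, but this is the same inequality). The genuine gap is in $I_2$ when $N_2$ changes sign on $(0,1)$, which does occur under the hypotheses (for instance $p=2.2$, $q=2.7$, $r=3$ satisfies $2p+q>7$, $p<7/3$, $q<r<5$, and $q+r<6$). Reducing via the weight $\psi$ leads to the condition $\int_0^1 N_2/D_1^{3/2}\,ds \le 0$, but you do not prove this, and your sketched combination of $\int (N_1+N_2)/(D_1+D_2)^{3/2}\,ds$ with a $t=s^{(3q-7)/2}$ substitution does not visibly produce it: the former integral has a different denominator, and no substitution $t = s^\alpha$ turns $D_1^{3/2}$ into $t^{1/2}(1-t)^{3/2}$. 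Carrying out the natural substitution $t=s^{(q-p)/2}$ gives $H(x,\cdot)$ with $x = (7-3p)/(2(q-p)) < 1/2$, so the $(2x-1)B(x,1/2)$ term of Lemma~\ref{lem:BvH} does not vanish and you end up with a three-term Beta expression rather than a two-term one.

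The paper's key idea, which you are missing, is to use for $N_2$ a \emph{different} reference denominator, namely $(s^{(p-1)/2} - s^{3-p})^{3/2}$ rather than $D_1^{3/2}$. The exponent $3-p$ is chosen for two reasons. First, $3-p < (q-1)/2$ precisely because $2p+q>7$, so by Lemma~\ref{lem:lineq} the weight $(D_1+\beta D_2)^{3/2}/(s^{(p-1)/2}-s^{3-p})^{3/2}$ is \emph{increasing}, and the rearrangement inequality applies with some $c\in(0,1]$ exactly as in your $I_1$ step. Second, and crucially, the substitution $t = s^{(7-3p)/2}$ turns $(s^{(p-1)/2}-s^{3-p})^{3/2}$ together with $ds$ into a constant times $t^{1/2}(1-t)^{3/2}\,dt$, so the resulting integral is $H(1/2,\cdot)$ and the $(2x-1)B(x,1/2)$ term in Lemma~\ref{lem:BvH} vanishes. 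This yields the clean two-term expression \eqref{J_0betaneg}, whose negativity is then established by showing $h(s) = (4-s)s\,B\!\bigl(s/(7-3p)+\tfrac12,\tfrac12\bigr)$ is strictly decreasing on $(q-1,r-1)$ via the upper bound in Lemma~\ref{lem:Bxineq}; the argument requires $s>6-2p$, which follows from $q>7-2p$. Your case split on the sign pattern of $N_2$ is also unnecessary: the paper's argument covers all $r>q$ uniformly by allowing $c=1$.
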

\begin{proof}
First suppose $p\geq\frac{7}{3}$, and consider the Iliev-Kirchev formula (\ref{I-KJ}). For $p\geq\frac{7}{3}$ and $\omega=0$, $U(s)/s=o(s^\frac{2}{3})$. Since $U'(a(0,\gamma))<0$, we have $U'(a(\omega,\gamma))-U'(s)<0$ on a neighbourhood of $0$ for $\omega$ sufficiently small. Since the integrand is uniformly integrable away from $0$, we then have
\begin{align*}
J(\omega,\gamma)=\frac{-1}{2U'(a)}\int_0^a\frac{3\sqrt{s}}{\sqrt{U(s)}}+\frac{U'(a)-U'(s)}{\left(U(s)/s\right)^\frac{3}{2}}ds\to-\infty
\end{align*}
as $\omega\to 0$. Now suppose $p<\frac{7}{3}$. As in the stable case, we consider each term $N_1, N_2$ in
\begin{align*}
J(0,\gamma)=C\left(\frac{p+1}{a^\frac{p-1}{2}}\right)^\frac{1}{2}\int_0^1\frac{N_1(s)+\beta N_2(s)}{(D_1(s)+\beta D_2(s))^\frac{3}{2}}ds
\end{align*}
separately. We have $N_2(0)<0$ and $N_2(1) =0$. By Lemma \ref{lem:RoS}, $N_2$ cannot have three positive zeros, so there is a $c\in (0,1]$ such that $\frac{N_2(s)}{(D_1(s)+\beta D_2(s))^\frac{3}{2}}\leq0$ for $s\in [0,c)$ and $\frac{N_2(s)}{(D_1(s)+\beta D_2(s))^\frac{3}{2}}>0$ for $s\in (c,1)$. For $s\in (0,1]$, let $\phi(s) = \frac{(D_1(s)+\beta D_2(s))^\frac{3}{2}}{(s^\frac{p-1}{2}-s^{3-p})^\frac{3}{2}}$ with $\phi(1)=\lim_{s\to 1^-}\phi(s)$. Since $p<\frac{7}{3}$, $\phi(1)<\infty$. Since $3-p<\frac{q-1}{2}$ we see that, by Lemma \ref{lem:lineq}
\begin{align*}
\phi'(s) = \frac{3}{2}\left(\phi(s)\right)^\frac{1}{3}\left(\frac{d}{ds}\frac{s^\frac{p-1}{2}-s^\frac{q-1}{2}}{s^\frac{p-1}{2}-s^{3-p}}+\beta\frac{d}{ds}\frac{s^\frac{q-1}{2}-s^\frac{r-1}{2}}{s^\frac{p-1}{2}-s^{3-p}}\right)>0
\end{align*}
and hence
\begin{align*}
\phi(c)\int_0^1\frac{N_2}{(D_1+\beta D_2)^\frac{3}{2}}ds&<\int_0^1\frac{N_2}{(D_1+\beta D_2)^\frac{3}{2}}\phi(s)ds\\
&=\int_0^1\frac{(5-r)(1-s^\frac{r-1}{2})-(5-q)(1-s^\frac{q-1}{2})}{(s^\frac{p-1}{2}-s^{3-p})^\frac{3}{2}}ds
\end{align*}
with $\phi(c)>0$. Using a change of variables $t=s^\frac{7-3p}{2}$, we write this integral as
\begin{align*}
&\frac{2}{7-3p}\int_0^1\frac{(5-r)(1-t^\frac{r-1}{7-3p})-(5-q)(1-t^\frac{q-1}{7-3p})}{t^\frac{1}{2}(1-t)^\frac{3}{2}}dt\\
&=\frac{2(5-r)}{7-3p}H\left(\frac{1}{2},\frac{r-1}{7-3p}\right)-\frac{2(5-q)}{7-3p}H\left(\frac{1}{2},\frac{q-1}{7-3p}\right)\\
\end{align*}
and using Lemma \ref{lem:BvH}, this becomes
\begin{align}
\frac{4(5-r)(r-1)}{(7-3p)^2}B\left(\frac{r-1}{7-3p}+\frac{1}{2},\frac{1}{2}\right)-\frac{4(5-q)(q-1)}{(7-3p)^2}B\left(\frac{q-1}{7-3p}+\frac{1}{2},\frac{1}{2}\right)\label{J_0betaneg}
\end{align}
Now, for fixed $r>\frac{7}{3}$, consider the function $h(s)=(4-s)sB(\frac{s}{7-3p}+\frac{1}{2},\frac{1}{2})$ for $s\in (q-1,r-1)$. As $q>7-2p$, we have $s>6-2p$. %
 {\crm By Lemma \ref{lem:Bxineq}, with $\mu=1$ for $s\le 4$ and $\mu=0$ for $s>4$,
\begin{align*}
h'(s) &= (4-2s)B(\frac{s}{7-3p}+\frac{1}{2},\frac{1}{2})+(4-s)\frac{s}{7-3p}\frac{\partial B}{\partial x}(\frac{s}{7-3p}+\frac{1}{2},\frac{1}{2})\\
&<\alpha B(\frac{s}{7-3p}+\frac{1}{2},\frac{1}{2}), \qquad \alpha:=4-2s-(4-s)\frac{s}{2s+(7-3p)\mu}.
\end{align*}
For $s>4$, 
$
\alpha=4-2s-(4-s)/2<0$.
For $s<4$, we have 
$\alpha=4-2s-(4-s)\frac{s}{2s+7-3p}$.
If $s\in [2,4)$, then $\alpha<4-2s<0$. If $s \in (4/3,2)$, then $q\le s+1 <3$ and $p>\frac 12(7-q)>2$. But there is no $p\in (2,7/3)$ 
such that $\alpha \ge 0$, which is equivalent to 
$ 2s+7-3p \ge \frac {4s-s^2}{4-2s}$, since
\[
2s+7- \frac {4s-s^2}{4-2s}< 3(3-\frac s2)<3p
\]
for $s \in (4/3,2)$.
Hence $\alpha<0$ when $s \in (4/3,2)$. We have shown $\alpha<0$ and}
 $h'(s)<0$ for $s\in (q-1,r-1)$, hence \eqref{J_0betaneg} is negative. This shows that the integral of the second term is negative. For the first term, we similarly have a $c\in (0,1]$ such that $N_1(s)<0$ for $s\in [0,c)$ and $N_1(s)>0$ for $s\in (c,1]$. Using the increasing function $\phi(s) = \frac{\left(D_1(s)+\beta D_2(s)\right)^\frac{3}{2}}{(D_1(s))^\frac{3}{2}}$ we get
\begin{align*}
\phi(c)\int_0^1\frac{N_1}{(D_1+\beta D_2)^\frac{3}{3}}ds <\int_0^1\frac{(5-q)(1-s^\frac{q-1}{2})-(5-p)(1-s^\frac{p-1}{2})}{(s^\frac{p-1}{2}-s^\frac{q-1}{2})^\frac{3}{2}}ds
\end{align*}
By Lemma \ref{lem:2pow}, the right hand side is negative for $2p+q>7$. Hence both terms are negative, and hence $J(0,\gamma)<0$.
\end{proof}
\begin{proposition}\label{prop:DFJ0US}
    If $2p+q<7<2q+r$, then $J(0,\gamma)<0$ for sufficiently large $\gamma$ and $J(0,\gamma)>0$ for sufficiently large $-\gamma$.
\end{proposition}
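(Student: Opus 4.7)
The plan is to work from the integral formula for $J(0,\gamma)$ stated before the proposition, and observe that its prefactor $C(\omega,\gamma)(p+1)^{1/2}/a^{(p-1)/4}$ is positive at $\omega=0$: eliminating $\gamma$ via $F_1(a)=0$ gives $U'(a)=\tfrac{p-q}{p+1}a^{(p-1)/2}+\tfrac{q-r}{r+1}a^{(r-1)/2}<0$, so $C>0$. Hence $\operatorname{sign}J(0,\gamma)=\operatorname{sign}I(\beta)$, where
\[
I(\beta):=\int_0^1\frac{N_1(s)+\beta N_2(s)}{(D_1(s)+\beta D_2(s))^{3/2}}\,ds,\qquad \beta=\tfrac{p+1}{r+1}a^{(r-p)/2}.
\]
The hypothesis $2p+q<7$ and $p<q$ force $p<7/3$, so the formula is valid. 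A direct computation shows $\gamma$, and hence $\beta$, is strictly increasing in $a$, with $\beta\to 0$ as $\gamma\to-\infty$ and $\beta\to\infty$ as $\gamma\to\infty$. It therefore suffices to analyze $I(\beta)$ in these two limits.

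For $\gamma\to -\infty$ (equivalently $\beta\to 0$), I would apply dominated convergence: the integrand converges pointwise to $N_1/D_1^{3/2}$, and for $\beta\in(0,1]$ is dominated by $(|N_1|+|N_2|)/D_1^{3/2}$ (using $D_1+\beta D_2\ge D_1$). This dominating function is integrable on $(0,1)$: near $0$ it is $\sim s^{-3(p-1)/4}$ (integrable as $p<7/3$) and near $1$ it is $\sim(1-s)^{-1/2}$. By Lemma \ref{lem:2pow} applied to $(p,q)$,
\[
\lim_{\beta\to 0}I(\beta)=\int_0^1 \frac{N_1(s)}{D_1(s)^{3/2}}\,ds=\frac{2(7-2p-q)}{q-p}\,B\!\left(\frac{7-3p}{2(q-p)},\tfrac{1}{2}\right)>0
\]
under $2p+q<7$. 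Hence $I(\beta)>0$ for $\beta$ small, so $J(0,\gamma)>0$ for sufficiently large $-\gamma$.

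For $\gamma\to +\infty$ (equivalently $\beta\to\infty$), I would rescale to
\[
\sqrt{\beta}\,I(\beta)=\int_0^1\frac{N_1(s)/\beta+N_2(s)}{(D_1(s)/\beta+D_2(s))^{3/2}}\,ds,
\]
whose integrand tends pointwise to $N_2/D_2^{3/2}$. If $q<7/3$, dominated convergence with dominating function $(|N_1|+|N_2|)/D_2^{3/2}$ (integrable) gives $\sqrt{\beta}\,I(\beta)\to\int_0^1 N_2/D_2^{3/2}\,ds=\tfrac{2(7-2q-r)}{r-q}B(\tfrac{7-3q}{2(r-q)},\tfrac12)<0$ by Lemma \ref{lem:2pow} applied to $(q,r)$ and the hypothesis $2q+r>7$. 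If $q\ge 7/3$, the pointwise limit is no longer integrable at $0$; instead, pick $c_0>0$ with $N_2(s)\le -\eta<0$ on $(0,c_0)$ (possible since $N_2(0)=q-r<0$), and apply Fatou's lemma to the nonnegative function $-(N_1/\beta+N_2)/(D_1/\beta+D_2)^{3/2}$ on $(0,c_0)$. Its pointwise liminf dominates $\eta/D_2^{3/2}$, whose integral diverges for $q\ge 7/3$, so the $(0,c_0)$-contribution to $\sqrt\beta\,I(\beta)$ tends to $-\infty$, while on $(c_0,1)$ dominated convergence gives a finite limit. Combining, $\sqrt{\beta}\,I(\beta)\to-\infty$, so $I(\beta)<0$ for $\beta$ large, hence $J(0,\gamma)<0$ for sufficiently large $\gamma$.

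The main obstacle is the case $q\ge 7/3$ in the $\gamma\to +\infty$ limit, where $\int_0^1 N_2/D_2^{3/2}\,ds$ itself diverges and the naive DCT does not produce a limit. The Fatou-plus-DCT decomposition on $(0,c_0)\cup(c_0,1)$ is the essential technical step: it exploits the fact that the singularity of $N_2/D_2^{3/2}$ at $0$ forces the rescaled integral to $-\infty$ with the correct negative sign, regardless of whether $\int_0^1 N_2/D_2^{3/2}\,ds$ converges.
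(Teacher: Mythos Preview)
Your argument is correct and follows the same route as the paper: both reduce to the integral $I(\beta)$ with positive prefactor, send $\beta\to 0$ or $\beta\to\infty$ (via the monotone relation between $a$, $\gamma$, and $\beta$), and identify the limiting integrals $\int_0^1 N_1/D_1^{3/2}$ and $\int_0^1 N_2/D_2^{3/2}$, whose signs come from Lemma~\ref{lem:2pow}. The paper's proof is terse and simply writes $J(0,\gamma)=C\bigl(\tfrac{p+1}{a^{(p-1)/2}\beta}\bigr)^{1/2}\bigl(\int_0^1 N_2/D_2^{3/2}+o(1)\bigr)$ without justifying the limit or discussing what happens when that integral diverges.

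Your treatment is in fact more careful than the paper's on one point: the hypothesis $2p+q<7<2q+r$ does \emph{not} force $q<7/3$ (e.g.\ $p=1.5$, $q=3$, $r=4$), so $\int_0^1 N_2/D_2^{3/2}$ may fail to converge and Lemma~\ref{lem:2pow} does not literally apply to the pair $(q,r)$. The paper glosses over this, while your Fatou-plus-DCT split on $(0,c_0)\cup(c_0,1)$ handles the divergent case rigorously and shows $\sqrt{\beta}\,I(\beta)\to-\infty$ when $q\ge 7/3$. This is a genuine improvement in rigor, not a different method.
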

\begin{proof}
    Since $p<\frac{7}{3}$, the integral (\ref{pqrstab}) is uniformly integrable, and so $\lim_{\omega\to 0} J(\omega,\gamma) = J(0,\gamma)$ with $J(0,\gamma)$ given by (\ref{J_0}). As $\gamma\to\infty$, $a(0,\gamma)\to \infty$, and so $\beta\to \infty$. Thus,
    \begin{align*}
        J(0,\gamma) = C(0,\gamma)\left(\frac{p+1}{a(0,\gamma)^\frac{p-1}{2}\beta}\right)^\frac{1}{2}\left(\int_0^1\frac{N_2(s)}{(D_2(s))^\frac{3}{2}}+o(1)\right)
    \end{align*}
    where $\int_0^1\frac{N_2(s)}{D_2(s)^\frac{3}{2}}< 0$ by Lemma \ref{lem:2pow}. Hence $J(0,\gamma)<0$ for large $\gamma$. Since $\beta\to 0$ as $\gamma \to -\infty$, we similarly have $J(0,\gamma)>0$ for large $-\gamma$.
\end{proof}
Proposition \ref{prop:DFlimits} shows that there is a stable region when $q<5$. The converse also holds.
\begin{proposition}\label{prop:DFallunStab}
If $q\geq 5$, then $J(\omega,\gamma)<0$ for all $\omega>0$, $\gamma\in \R$.
\end{proposition}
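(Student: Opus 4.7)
The plan is to show directly that the numerator $N(a,s)$ in \eqref{ND.def} satisfies $N(a,s)<0$ for all $s\in(0,1)$ whenever $q\ge 5$, which by positivity of $D(a,s)^{3/2}$ forces $J(\omega,\gamma)<0$. The idea is to eliminate $\gamma$ from $N(a,s)$ using the constraint $U(a)=0$. In the DF case $a_1=-1$, $a_3=1$, the identity $U(a)=0$ reads
\begin{equation*}
\gamma a^{(q-1)/2}=\frac{q+1}{r+1}a^{(r-1)/2}-\frac{q+1}{p+1}a^{(p-1)/2}-\frac{(q+1)\omega}{2}.
\end{equation*}
Multiplying by $(1-s^{(q-1)/2})/(q+1)$ gives an expression for $\gamma A_q(a,s)$, and substituting into $N(a,s)$ yields a decomposition
\begin{equation*}
N(a,s)=X_p(s)\,a^{(p-1)/2}+X_r(s)\,a^{(r-1)/2}+Y(s),
\end{equation*}
where
\begin{align*}
X_p(s)&=\tfrac{1}{p+1}\bigl[(p-5)(1-s^{(p-1)/2})+(5-q)(1-s^{(q-1)/2})\bigr],\\
X_r(s)&=\tfrac{1}{r+1}\bigl[(q-5)(1-s^{(q-1)/2})+(5-r)(1-s^{(r-1)/2})\bigr],\\
Y(s)&=-\tfrac{(q-5)\omega(1-s^{(q-1)/2})}{2}.
\end{align*}

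Since $q\ge 5$ and $\omega>0$, clearly $Y(s)\le 0$, with strict inequality on $(0,1)$ when $q>5$. For $X_r$, I would factor out $(1-s^{(r-1)/2})$ and use Lemma \ref{lem:lineq} applied to $h(s)=(1-s^{(q-1)/2})/(1-s^{(r-1)/2})$: since $q<r$, this ratio lies in $[(q-1)/(r-1),1]$, in particular $\le 1$. Thus
\begin{equation*}
X_r(s)\le \tfrac{1-s^{(r-1)/2}}{r+1}\bigl[(q-5)\cdot 1+(5-r)\bigr]=\tfrac{(1-s^{(r-1)/2})(q-r)}{r+1}<0
\end{equation*}
on $(0,1)$, since $q<r$ and $q-5\ge 0$. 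This term alone already provides the strict negativity needed when $q=5$.

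The main obstacle is $X_p$, whose sign is not immediate when $p>5$ (which is only allowed in conjunction with $q>p>5$), since then $(p-5)>0$ while $(5-q)<0$. Here I would factor out $(1-s^{(p-1)/2})$ and apply Lemma \ref{lem:lineq} to $h(s)=(1-s^{(q-1)/2})/(1-s^{(p-1)/2})$ with $p<q$; the lemma shows this ratio is monotonically increasing on $(0,1)$ from $1$ to $(q-1)/(p-1)$, so in particular $\ge 1$. Since $(5-q)<0$, multiplying the inequality by $(5-q)$ reverses it, yielding
\begin{equation*}
X_p(s)\le \tfrac{1-s^{(p-1)/2}}{p+1}\bigl[(p-5)+(5-q)\bigr]=\tfrac{(1-s^{(p-1)/2})(p-q)}{p+1}<0
\end{equation*}
on $(0,1)$. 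The remaining cases $p\le 5$ are easier: when $p\le 5\le q$, both coefficients in the bracket defining $X_p$ are non-positive, so $X_p\le 0$ term by term.

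Combining the three bounds, $N(a,s)\le 0$ on $[0,1]$ with strict inequality on $(0,1)$ (supplied by $X_r$ when $q=5$ and by $Y$ together with the other terms when $q>5$). Since $D(a,s)>0$ on $[0,1)$ by Lemma \ref{lem:pqrstab}, the integral in \eqref{pqrstab} is strictly negative, and the positive prefactor $C(\omega,\gamma)=-a/(4\sqrt{2}\,U'(a))>0$ (because $U'(a)<0$ for $(\omega,\gamma)\in R_{\ex}$, which is the whole half-plane in the DF case by Proposition \ref{prop:Rex}) gives $J(\omega,\gamma)<0$ for every $\omega>0$ and $\gamma\in\R$.
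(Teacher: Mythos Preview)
Your argument is correct and takes a different, more direct route than the paper. The paper fixes $\gamma$ and works with the value $a_0(\gamma)=a(0,\gamma)$ corresponding to $\omega=0$: it eliminates $\gamma$ via \eqref{gamelim} only at that single point, shows $N(a_0(\gamma),s)<0$, and then invokes Descartes' rule (Lemma~\ref{lem:RoS}) to propagate negativity of $N(a,s)$ to all $a>a_0(\gamma)$, splitting into subcases $p\gtrless 5$ and $\gamma\gtrless 0$. You instead eliminate $\gamma$ using the constraint $U(a)=0$ at the \emph{actual} pair $(\omega,a)$, which produces the extra $\omega$-dependent term $Y(s)\le 0$ and removes any need for the sign-change counting argument or the $\gamma$-case split. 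This is cleaner: every piece $X_p$, $X_r$, $Y$ is shown nonpositive by the elementary inequality $1-s^{(q-1)/2}\ge 1-s^{(p-1)/2}$ (for which Lemma~\ref{lem:lineq} is overkill but certainly works), and strict negativity of $X_r$ on $(0,1)$ handles the borderline $q=5$. The paper's approach has the advantage of reusing its established pattern (reduce to a boundary value of the parameter, then Lemma~\ref{lem:RoS}), while yours is self-contained and shorter.
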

\begin{proof}
For $\gamma\in\R$, let $a_0(\gamma) = a(0,\gamma)>0$. Then $a_0(\gamma)$ satisfies
\begin{align}
    \gamma\frac{a_0(\gamma)^\frac{q-1}{2}}{q+1}=-\frac{a_0(\gamma)^\frac{p-1}{2}}{p+1}+\frac{a_0(\gamma)^\frac{r-1}{2}}{r+1}\label{gamelim}
\end{align}
First suppose $p\geq 5$. Using {\crm $1-s^t \le 1-s^u$ for $s\in(0,1)$, $t<u$},
we have, for $\gamma >0$
\begin{align*}
\frac{N(a,s)}{1-s^\frac{r-1}{2}}&\leq-\frac{5-p}{p+1}a^\frac{p-1}{2}-\gamma\frac{5-q}{q+1}a^\frac{q-1}{2}+\frac{5-r}{r+1}a^\frac{r-1}{2}\\
\end{align*}
and for $\gamma<0$
\begin{align*}
\frac{N(a,s)}{1-s^\frac{p-1}{2}}&\leq-\frac{5-p}{p+1}a^\frac{p-1}{2}-\gamma\frac{5-q}{q+1}a^\frac{q-1}{2}+\frac{5-r}{r+1}a^\frac{r-1}{2}.\\
\end{align*}
In the case $a=a_0(\gamma)$, (\ref{gamelim}) yields
\begin{align*}
    -\frac{5-p}{p+1}a_0(\gamma)^\frac{p-1}{2}-\gamma\frac{5-q}{q+1}a_0(\gamma)^\frac{q-1}{2}+\frac{5-r}{r+1}a_0(\gamma)^\frac{r-1}{2}&=-\frac{q-p}{p+1}a_0(\gamma)^\frac{p-1}{2}-\frac{r-q}{r+1}a_0(\gamma)^\frac{r-1}{2}<0.
\end{align*}
Hence $N(a_0(\gamma),s)<0$ for all $\gamma\in \R$ and $s\in (0,1)$. Since $r>5$, $N(a,s)$ is also negative for large $a$. By Lemma \ref{lem:RoS}, $N(a,s)$ changes sign at most once for $a\in (0,\infty)$, so $N(a,s)<0$ for all $a>a_0(\gamma)$.

Now suppose $p<5$. If $\gamma<0$, then each term in $N(a,s)$ is negative for all $a>a_0(\gamma)$ and $s\in (0,1)$. If $\gamma>0$, then
\begin{align}
    \frac{N(a,s)}{1-s^\frac{r-1}{2}}&\leq -\gamma\frac{5-q}{q+1}a^\frac{q-1}{2}+\frac{5-r}{r+1}a^\frac{r-1}{2}\label{L23}.
\end{align}
The right hand side is negative for $a=a_0(\gamma)$, as
\begin{align*}
    -\gamma\frac{5-q}{q+1}a_0(\gamma)^\frac{q-1}{2}+\frac{5-r}{r+1}a_0(\gamma)^{\frac{r-1}2}=\frac{5-q}{p+1}a_0(\gamma)^\frac{p-1}{2}-\frac{r-q}{r+1}a_0(\gamma)^\frac{r-1}{2}<0.
\end{align*}
The right hand side of (\ref{L23}) is also negative for large $a$ as $r>5$, and is therefore negative for all $a>a_0(\gamma)$ by Lemma \ref{lem:RoS}. Hence $N(a,s)<0$ for all $a>a_0(\gamma)$, and hence $J(\omega,\gamma)<0$ for all $\omega>0$ and $\gamma\in\R$.
\end{proof}
The existence of an unstable region is harder to determine for given $1<p<q<r$. When $2q+r\leq 7$, we know by Propositions \ref{prop:DFlimits} and \ref{prop:DFJ0pos} that $J(\omega,\gamma)$ is eventually positive in each limit case $\omega\to 0$, $\omega\to \infty$, and $\gamma\to \pm\infty$. This leads us expect that $J(\omega,\gamma)>0$ for all $\omega>0$ and $\gamma\in\R$ when {\crm $2q+r\leq 7$}. This is supported by numerical observations in the previous section.

\section{Theorems for the DD Case}\label{S9}
{\crm In this section we prove results for the DD case with $a_1=a_3=-1$.}

In the DD case, solutions do not exist for large $\gamma$ and large $\omega$, (they exist for $\omega<\omega^*(\gamma)$ and $\gamma<\gamma_1$ by Proposition \ref{prop:Rex}.)
and the limits of $J(\omega,\gamma)$ close to the nonexistence curve $\Gamma_{\nex}$ are given by Proposition \ref{prop:Gamlimits}. The limits of $J(\omega,\gamma)$ for $\gamma\to - \infty$ are proved in the same way as the FF case.
\begin{proposition}\label{prop:DDlimits}
In the DD case,
\begin{enumerate}
\item If $q<5$, then $\lim_{\gamma\to-\infty}J(\omega,\gamma)=0^+$ for all $\omega>0$. 
\item If $q\geq 5$, then $\lim_{\gamma\to-\infty} J(\omega,\gamma)= 0^-$ for all $\omega>0$.
\end{enumerate}
\end{proposition}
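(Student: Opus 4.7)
The plan is to mimic the proof of Proposition \ref{prop:FFlimits} part \ref{case:FFgamsml} almost verbatim, tracking only the sign changes caused by replacing $a_1=a_3=1$ with $a_1=a_3=-1$. The starting observation is that by Lemma \ref{lem:agamma}, for any fixed $\omega>0$ the value $a(\omega,\gamma)$ exists once $-\gamma$ is large enough and satisfies $a\to 0^+$. Writing $F_1(a)=\omega$ with the DD signs gives
\[
\omega = -\tfrac{2}{p+1}a^{(p-1)/2}-\tfrac{2\gamma}{q+1}a^{(q-1)/2}-\tfrac{2}{r+1}a^{(r-1)/2},
\]
and since $p<q<r$ implies $a^{(p-1)/2}$, $a^{(r-1)/2}=o(a^{(q-1)/2})$ multiplied against $|\gamma|\to\infty$ is impossible unless the $q$-term balances $\omega$, one concludes $\gamma\,a^{(q-1)/2}\to -(q+1)\omega/2$. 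Using this balance in
\[
U'(a)=\omega+a^{(p-1)/2}+\gamma a^{(q-1)/2}+a^{(r-1)/2}
\]
yields $U'(a)\to -(q-1)\omega/2<0$, so $C(\omega,\gamma)=-a/(4\sqrt 2\,U'(a))=\Theta(a)$ and positive.

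Next I would analyse the integrand $N(a,s)/D(a,s)^{3/2}$ from Lemma \ref{lem:pqrstab}, in which the term $-\gamma A_q$ dominates both $N$ and $D$ because it is the only one multiplied by a factor $|\gamma|\to\infty$. Explicitly, $-\gamma A_q(a,s)\to \tfrac{\omega}{2}(1-s^{(q-1)/2})$, while $-A_p(a,s)$ and $-A_r(a,s)$ vanish. Hence for $q\neq 5$,
\[
\int_0^1\frac{N(a,s)}{D(a,s)^{3/2}}\,ds \;\longrightarrow\; (5-q)\sqrt{\tfrac{2}{\omega}}\int_0^1\bigl(1-s^{(q-1)/2}\bigr)^{-1/2}ds,
\]
a finite nonzero constant with sign $\operatorname{sign}(5-q)$ (integrability at $s=1$ follows from $(1-s^{(q-1)/2})^{-1/2}=O((1-s)^{-1/2})$). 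Combining with $C=\Theta(a)$ and $a\to 0^+$ gives
\[
J(\omega,\gamma)=(5-q)\Theta(a)\;\longrightarrow\;0^{\operatorname{sign}(5-q)},
\]
which covers both $q<5$ and $q>5$ with $q\neq 5$.

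The borderline $q=5$ requires one extra step: the dominant $A_q$ term in $N$ vanishes identically, so after factoring $a^{(p-1)/2}$ out of the numerator one gets
\[
J(\omega,\gamma)=\frac{-(5-p)\,a^{(p-1)/2}\,C(\omega,\gamma)}{(p+1)(\omega/2)^{3/2}}\Bigl(\int_0^1\tfrac{1-s^{(p-1)/2}}{(1-s^{(q-1)/2})^{3/2}}\,ds+o(1)\Bigr),
\]
where the minus sign in front reflects $a_1=-1$. The integral is a positive finite constant (the denominator behaves like $(1-s)^{3/2}$ near $s=1$ and is cancelled by the numerator's $(1-s)$), and since $5-p>0$, $C=\Theta(a)>0$, we get $J=-\Theta(a^{(p+1)/2})\to 0^-$, completing the case $q=5$.

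There is no genuine obstacle here: the entire argument is the FF-case computation with the explicit substitution $a_1=a_3=-1$. The only bookkeeping item deserving care is checking that the leading-order cancellation in $U'(a)$, which determines the sign of $C$, produces the same $-(q-1)\omega/2<0$ as in the FF case (it does, because the $a^{(p-1)/2}$ and $a^{(r-1)/2}$ contributions to $U'(a)$ are lower order in both sign conventions), so $C>0$ throughout and the sign of $J$ is dictated entirely by the coefficient $5-q$ (or, when $q=5$, by $-(5-p)<0$).
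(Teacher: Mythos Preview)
Your proposal is correct and follows exactly the approach the paper indicates: the paper's ``proof'' of Proposition \ref{prop:DDlimits} consists of the single sentence ``The limits of $J(\omega,\gamma)$ for $\gamma\to -\infty$ are proved in the same way as the FF case,'' and what you have written is precisely the FF computation (Proposition \ref{prop:FFlimits}, part \ref{case:FFgamsml}) with the substitutions $a_1=a_3=-1$ carried through. Your tracking of the borderline case $q=5$---where the sign of the limit flips from $0^+$ (FF) to $0^-$ (DD) because of the factor $a_1$---is the one place where the DD argument genuinely differs in outcome, and you handle it correctly.
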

The limits for small $\omega$ are only partially described by the following proposition.
\begin{proposition}\label{prop:DDJ0limits}
In the DD case,
    \begin{enumerate}
        \item
        If $p\geq \frac{7}{3}$, then $\lim_{\omega\to 0} J(\omega,\gamma)=-\infty$ for all $\gamma<\gamma_1$.
        \item 
        Suppose $p<\frac{7}{3}$. Then $\lim_{\omega\to 0} J(\omega,\gamma) = J(0,\gamma)$ for all $\gamma\in \R$, and the following hold
        \begin{enumerate}
            \item $\lim_{\gamma\to \gamma_1^-}J(0,\gamma)=\infty$
            \item If $2p+q<7$, then $\lim_{\gamma\to -\infty} J(0,\gamma) =0^+$.
            \item If $2p+q> 7$, then $\lim_{\gamma\to -\infty} J(0,\gamma) = 0^-$.
        \end{enumerate}
    \end{enumerate}
\end{proposition}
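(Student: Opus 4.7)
The overall strategy is to analyze the Iliev--Kirchev formula \eqref{I-KJ}, or equivalently the rescaled form \eqref{pqrstab}, in each asymptotic regime. Throughout, for $\gamma<\gamma_1$ Lemmas \ref{lem:aomega}--\ref{lem:altcts} give $a(\omega,\gamma)\searrow a_0(\gamma):=a(0,\gamma)>0$ as $\omega\to 0^+$; since the DD curve $\Gamma_{\nex}$ touches the $\gamma$-axis only at $\gamma_1$, continuity gives $U'(a_0(\gamma))<0$ strictly when $\gamma<\gamma_1$. For Part 1, at $\omega=0$ the DD case has $U(s)\sim\tfrac{2}{p+1}s^{(p+1)/2}$ and $U'(s)\to 0$ near $s=0$, so the bracket $3+s(U'(a)-U'(s))/U(s)$ in \eqref{I-KJ} tends to $-\infty$ like $U'(a_0)\cdot\tfrac{p+1}{2}s^{-(p-1)/2}$, and the full integrand is of order $U'(a_0)\cdot s^{(3-3p)/4}$, with exponent $\le -1$ whenever $p\ge 7/3$. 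For $\omega>0$ the $\omega s$ term regularizes $U(s)$ on the scale $s\sim\omega^{2/(p-1)}$, but splitting the integral at a fixed small $\delta>0$ shows that the contribution from $[0,\delta]$ diverges to $-\infty$ (power-law for $p>7/3$, logarithmic for $p=7/3$) while the contribution from $[\delta,a(\omega,\gamma)]$ stays uniformly bounded. Dividing by $-2U'(a)>0$ preserves the sign, yielding $J(\omega,\gamma)\to-\infty$.

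For Part 2, I first establish $\lim_{\omega\to 0^+}J(\omega,\gamma)=J(0,\gamma)$ via dominated convergence applied to \eqref{pqrstab}. The only delicate point is uniform integrability of $N/D^{3/2}$ near $s=0$, where $D(a,s)=U(as)/(2as)$ degenerates as $\omega\to 0^+$. However, $U(as)/(as)=\omega+\tfrac{2}{p+1}(as)^{(p-1)/2}+\tfrac{2\gamma}{q+1}(as)^{(q-1)/2}+\tfrac{2}{r+1}(as)^{(r-1)/2}$ is dominated for small $as$ by the positive power $\tfrac{2}{p+1}(as)^{(p-1)/2}$, yielding $D(a,s)^{3/2}\ge C\,s^{3(p-1)/4}$ uniformly in $\omega\in[0,\omega_1]$; since $3(p-1)/4<1$ for $p<7/3$, the integrand admits an integrable majorant. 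For Part 2(a), as $\gamma\nearrow\gamma_1$ one has $a_0(\gamma)\to\aDD$ and $U'(\aDD)=0$, while $U''(\aDD)>0$ strictly in the DD case: Proposition \ref{prop:Rex} gives $\aDD^{(r-p)/2}=(q-p)(r+1)/[(r-q)(p+1)]$, which exceeds the critical value $(q-p)(p-1)(r+1)/[(r-q)(r-1)(p+1)]$ by the factor $(r-1)/(p-1)>1$. Thus near $s=1$, $D(\aDD,s)\sim\text{const}\cdot(1-s)^2$ while $N(\aDD,s)\sim\text{const}\cdot(1-s)$, producing a non-integrable $(1-s)^{-2}$ singularity. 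The sign follows as in Proposition \ref{prop:Gamlimits}: specializing the computed limit $\lim_{s\to 1}N/(1-s^{(p-1)/2})$ to DD yields $-\tfrac{q-p}{p+1}\aDD^{(p-1)/2}+\tfrac{(r-1)(r-q)}{(p-1)(r+1)}\aDD^{(r-1)/2}$, which is positive by the same factor $(r-1)/(p-1)>1$. Since additionally $C(0,\gamma)=-a_0/[4\sqrt{2}\,U'(a_0)]\to+\infty$ as $|U'(a_0)|\to 0$, we obtain $J(0,\gamma)\to+\infty$.

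For Parts 2(b) and 2(c), Lemma \ref{lem:agamma} together with the identity $\gamma a_0^{(q-1)/2}/(q+1)=-a_0^{(p-1)/2}/(p+1)-a_0^{(r-1)/2}/(r+1)$ (from $F_1(a_0)=0$) give $a_0(\gamma)\to 0$ as $\gamma\to-\infty$, with the $a_0^{(p-1)/2}$ term dominating. Using this relation to eliminate $\gamma$ and factoring $a_0^{(p-1)/2}$ from both $N(a_0,s)$ and $D(a_0,s)$, the leading-order integrand reduces to a two-power expression in $p$ and $q$; Lemma \ref{lem:2pow} identifies the resulting integral as $2\tfrac{7-2p-q}{q-p}B\bigl(\tfrac{7-3p}{2(q-p)},\tfrac12\bigr)$, whose sign is $\operatorname{sign}(7-2p-q)$. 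Combined with $C(0,\gamma)=\Theta(a_0^{(3-p)/2})$ from $U'(a_0)=\tfrac{p-q}{p+1}a_0^{(p-1)/2}+O(a_0^{(r-1)/2})$, one obtains $J(0,\gamma)=\operatorname{sign}(7-2p-q)\cdot\Theta(a_0^{(7-3p)/4})\to 0$ from the indicated side, since $p<7/3$ makes $(7-3p)/4>0$. The main obstacle is Part 1: converting the pointwise divergence at $\omega=0$ into a genuine limit as $\omega\to 0^+$ requires a quantitative uniform estimate showing that the negative contribution on $[0,\delta]$ decisively outgrows any bounded remainder on $[\delta,a(\omega,\gamma)]$, despite the $\omega s$ regularization on the small scale $s\sim\omega^{2/(p-1)}$.
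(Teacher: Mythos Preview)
Your treatment of Part 2 is correct and close in spirit to the paper's. For the convergence $J(\omega,\gamma)\to J(0,\gamma)$ you supply a dominated-convergence argument that the paper simply asserts; for 2(a) the paper works instead in the $\beta$-reformulation \eqref{J_0} (with $\beta=\tfrac{p+1}{r+1}a^{(r-p)/2}\to\tfrac{q-p}{r-q}$), showing $D_1+\beta D_2$ acquires a double zero at $s=1$ and obtaining positivity of the numerator limit via concavity of $s\mapsto(5-s)(s-1)$ rather than by quoting the computation in Proposition \ref{prop:Gamlimits}. Your route through that proposition and the explicit factor $(r-1)/(p-1)>1$ is equally valid. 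Parts 2(b)(c) match the paper.

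The gap you flag in Part 1 is real as stated, but the paper's resolution is simpler than the quantitative scale analysis you outline: work in the rescaled form \eqref{pqrstab} (fixed domain $[0,1]$, with $\omega$ entering only through $a(\omega,\gamma)$) and apply Fatou's Lemma. Eliminating $\gamma$ via $F_1(a_0)=0$ gives
\[
N(a_0,0)=-\tfrac{q-p}{p+1}a_0^{(p-1)/2}+\tfrac{r-q}{r+1}a_0^{(r-1)/2},
\]
which is strictly negative for $\gamma<\gamma_1$ since $a_0(\gamma)<\aDD$. Uniform convergence of $N(a(\omega,\gamma),s)\to N(a_0,s)$ then yields $\delta>0$ with $N(a(\omega,\gamma),s)\le 0$ on $[0,\delta]$ for all small $\omega$. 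Now Fatou applied to the nonnegative functions $-N/D^{3/2}$ gives
\[
\liminf_{\omega\to 0}\int_0^\delta\frac{-N(a,s)}{D(a,s)^{3/2}}\,ds\ \ge\ \int_0^\delta\frac{-N(a_0,s)}{D(a_0,s)^{3/2}}\,ds\ =\ +\infty,
\]
the divergence coming from $D(a_0,s)=O(s^{(p-1)/2})$ with $3(p-1)/4\ge 1$ when $p\ge 7/3$. The $[\delta,1]$ contribution stays bounded exactly as you say. This sidesteps the $\omega s$ regularization entirely: no estimate on the transition scale $s\sim\omega^{2/(p-1)}$ is needed.
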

\begin{proof}  
Case 1: $p\geq \frac{7}{3}$.
        Let $\gamma<\gamma_1$, and let $\crm a_0(\gamma) \nc =\lim_{\omega\to 0} a(\omega,\gamma)$. By
\crm        Lemmas \ref{lem:aomega}, \ref{lem:agamma},  and \nc
         Proposition \ref{prop:Rex}, $0<a_0(\gamma)<a(0,\gamma_1)=\crm\big(\frac{(r+1)(q-p)}{(p+1)(r-q)}\big)^{\frac 2{r-p}}\nc$. As $s\to 0$,
        \begin{align*}
            N(a_0(\gamma),s)&\to-\frac{5-p}{p+1}a_0(\gamma)^\frac{p-1}{2}-\gamma\frac{5-q}{q+1}a_0(\gamma)^\frac{q-1}{2}-\frac{5-r}{r+1}a_0(\gamma)^\frac{r-1}{2}\\
            &=-\frac{q-p}{p+1}a_0(\gamma)^\frac{p-1}{2} \crm + \nc \frac{r-q}{r+1}a_0(\gamma)^\frac{r-1}{2}
            \\
            &\crm <a_0(\gamma)^\frac{p-1}{2} \left (-\frac{q-p}{p+1}+  \frac{r-q}{r+1}a_0(0,\gamma_1)^\frac{r-p}{2} \right)=    0. \nc
        \end{align*}
        Moreover, the convergence $N(a(\omega,\gamma))\to N(a_0(\gamma))$ as $\omega\to 0$ is uniform on a neighbourhood of $0$. Hence, there is a $\delta>0$ such that $N(a,s)\leq 0$ for $s\in [0,\delta]$ and $\omega$ close to $0$. By Fatou's Lemma,
        \begin{align}
            \lim_{a\to a_0(\gamma)}\int_0^\delta \frac{N(a,s)}{(D(a,s))^\frac{3}{2}}ds\leq \int_0^\delta \frac{N(a_0(\gamma),s)}{(D(a_0(\gamma),s))^\frac{3}{2}}ds\label{inflim}
        \end{align}
        and
        \begin{align*}
            D(a_0(\gamma),s) = (s^\frac{p-1}{2}-s^\frac{q-1}{2})\frac{a_0(\gamma)^\frac{p-1}{2}}{p+1}+(s^\frac{r-1}{2}-s^\frac{q-1}{2})\frac{a_0(\gamma)^\frac{r-1}{2}}{r+1}=O(s^\frac{p-1}{2})=O(s^\frac{2}{3})
        \end{align*}
        so the right side of (\ref{inflim}) is $-\infty$. For $a$ close to $a_0(\gamma)$ and $s\in[\delta,1]$ the $\frac{D(a,s)}{1-s}$ is bounded away from $0$, so the integrand is uniformly integrable on $[\delta,1]$. Hence $J(\omega,\gamma)\to -\infty$ as $\omega \to 0$.

\smallskip
Case 2: $p< \frac{7}{3}$.

For Part (a):
            As $p<\frac{7}{3}$, $\lim_{\omega\to 0} J(\omega,\gamma) = J(0,\gamma)$ with $J(0,\gamma)$ given by (\ref{J_0}). By Proposition \ref{prop:Rex}, $a(0,\gamma)^\frac{r-p}{2}<a(0,\gamma_1)^\frac{r-p}{2}=\frac{(r+1)(q-p)}{(p+1)(r-q)}$ for $\gamma<\gamma_1$, and so $\beta(\gamma)<\frac{q-p}{r-q}$. When $\beta=\frac{q-p}{r-q}$, the denominator
    \begin{align*}
        D_1(s)+\beta D_2(s)=(s^\frac{p-1}{2}-s^\frac{q-1}{2})-\beta (s^\frac{q-1}{2}-s^\frac{r-1}{2})
    \end{align*}
    has a double zero at $s=1$. For the numerator we have,
    \begin{align*}\hspace{-1cm}
        \frac{\crm (r-q) \nc N_1(s)+(q-p) N_2(s)}{1-s^\frac{1}{2}}\to -(5-p)(p-1)(r-q)+(5-q)(q-1)(r-p)-(5-r)(r-1)(q-p)
    \end{align*}
    as $s\to 1$, which is positive by the concavity of $s\mapsto (5-s)(s-1)$. Since 
    \begin{align*}
        (r-q)\frac{N_1(s)+\beta N_2(s)}{1-s^\frac{1}{2}}\to \frac{N_1(s)+(q-p) N_2(s)}{1-s^\frac{1}{2}}
    \end{align*}
\crm as $\gamma \to \gamma_1^-$    
    uniformly in $s$, \nc there is a $\delta>0$ such that $N_1(s)+\beta N_2(s)>0$ for $s\in (1-\delta,1]$ and $\beta$ close to $\frac{q-p}{r-q}$. Since the numerator is $\Theta(1-s)$ near $1$ when $\beta = \frac{q-p}{r-q}$, we get 
    \begin{align*}
        \int_{1-\delta}^1\frac{N_1(s)+\beta N_2(s)}{(D_1(s)+\beta D_2(s))^\frac{3}{2}}ds \to \infty\quad \text{as } \beta\to \frac{q-p}{r-q}.
    \end{align*}
    Since the integrand is uniformly integrable on $[0,1-\delta]$, this shows that $\lim_{\gamma\to \gamma_1^-}J(0,\gamma)=\infty$.

\crm For Parts (b) and (c): \nc
    As $\gamma\to-\infty$, $\beta\to 0$. Taking $\beta\to 0$ in (\ref{J_0}), gives
    \begin{align*}
        J(0,\gamma) = C(0,\gamma)\left(\frac{p+1}{a(0,\gamma)^\frac{p-1}{2}}\right)^\frac{1}{2}\left(\int_0^1\frac{N_1(s)}{(D_1(s))^\frac{3}{2}}+o(1)\right)
    \end{align*}
    By Lemma \ref{lem:2pow}, $\int_0^1\frac{N_1(s)}{D_1(s)^\frac{3}{2}}$ is negative when $2p+q>7$, and positive when $2p+q<7$. 
\end{proof}
Since both $\lim_{\gamma\to \gamma_1} J(0,\gamma)=\infty$ and $\lim_{\gamma\to -\infty} J(0,\gamma)=0^+$ when $2p+q\leq 7$, we expect that $J(0,\gamma)>0$ for all $\gamma\in \R$ in this case. If this is true, then all limits of $J(\omega,\gamma)$ near $\Gamma_{\nex}$, for $\omega\to 0$, and for $\gamma\to -\infty$ are all positive when $2p+q<7$, which in turn suggests that $J(\omega,\gamma)>0$ for all $\omega>0$, $\gamma\in\R$ when $2p+q<7$. This is supported by numerical observations in the  Section \ref{S2}.

\section*{Acknowledgments}
We thank Stefan Le Coz and the anonymous referees for very helpful suggestions.
The research of both TM and TT was partially supported by the NSERC grant
RGPIN-2023-04534. TM was also supported by an NSERC USRA.

\addcontentsline{toc}{section}{\protect\numberline{\hspace{2mm}}{References}}
\bibliographystyle{abbrv}
\bibliography{357nls}
\end{document}